\newtheorem{theorem}{Theorem}[section]
\newtheorem{lemma}[theorem]{Lemma}
\newtheorem{example}[theorem]{Example}
\newtheorem{corollary}[theorem]{Corollary}
\newtheorem{proposition}[theorem]{Proposition}
\def\R{\mathbb{R}}
\def\Z{\mathbb{Z}}
\DeclareMathOperator{\inv}{n_{inv}}
\DeclareMathOperator{\st}{st}
\DeclareMathOperator{\hgt}{ht}
\DeclareMathOperator{\dep}{dp}
\DeclareMathOperator{\depm}{dp^{\kern-1pt *} \kern-1pt}
\DeclareMathOperator{\ret}{ret}
\DeclareMathOperator{\Ret}{\overrightarrow{\text{Ret}}}
\begin{document}

\title{\bf Generating Functions for Inverted Semistandard Young Tableaux \& Generalized Ballot Numbers}
\author{Paul Drube\\
\small Department of Mathematics and Statistics\\[-0.8ex]
\small Valparaiso University\\[-0.8ex] 
\small Valparaiso, Indiana, U.S.A.\\
\small\tt paul.drube@valpo.edu\\
}

\maketitle

\begin{abstract}
An inverted semistandard Young tableau is a row-standard tableau along with a collection of inversion pairs that quantify how far the tableau is from being column semistandard.  Such a tableau with precisely $k$ inversion pairs is said to be a $k$-inverted semistandard Young tableau.  Building upon earlier work by Fresse and the author, this paper develops generating functions for the numbers of $k$-inverted semistandard Young tableau of various shapes $\lambda$ and contents $\mu$.  An easily-calculable generating function is given for the number of $k$-inverted semistandard Young tableau that ``standardize" to a fixed semistandard Young tableau.  For $m$-row shapes $\lambda$ and standard content $\mu$, the total number of $k$-inverted standard Young tableau of shape $\lambda$ are then enumerated by relating such tableaux to $m$-dimensional generalizations of Dyck paths and counting the numbers of ``returns to ground" in those paths.  In the rectangular specialization of $\lambda = n^m$ this yields a generating function that involves $m$-dimensional analogues of the famed Ballot numbers.  Our various results are then used to directly enumerate all $k$-inverted semistandard Young tableaux with arbitrary content and two-row shape $\lambda = a^1 b^1$, as well as all $k$-inverted standard Young tableaux with two-column shape $\lambda=2^n$.

\bigskip\noindent \textbf{AMS Subject Classifications:} 05A19, 05A05\\
\textbf{Keywords:} Young tableaux, inversions of Young tableaux, lattice paths, ballot numbers
\end{abstract}

\section{Introduction}
\label{sec: intro}

For any non-increasing strong partition $\lambda = (\lambda_1,\lambda_2,\hdots,\lambda_m)$ of the positive integer $N$, a Young diagram $Y$ of shape $\lambda$ is a left-justified array of $N$ boxes such that there are precisely $\lambda_i$ boxes in the $i^{th}$ row of $Y$.  A filling of a Young diagram $Y$ is an assignment of positive integers (possibly repeated) to the boxes of $Y$ such that no integer is skipped.  If a filling uses precisely $\mu_i$ copies of $i$ for each integer $1 \leq i \leq M$, where $M \leq N$ and $\mu_1 + \hdots \mu_M = N$, we say that the filling has content $\mu = (\mu_1,\mu_2,\hdots, \mu_M)$.  We will oftentimes use the shorthand notation $\mu = 1^{\mu_1} 2^{\mu_2} \hdots M^{\mu_M}$.

This paper will utilize several distinct types of fillings.  A filling is said to be row-standard if entries increase from left-to-right across each row, and to be column-standard (resp. column-semistandard) if entries increase (resp. weakly increase) from top-to-bottom down each column.  If a filling is both row-standard and column-standard, as well as if $\mu = 1^1 2^1 \hdots N^1$, the resulting array $T$ is said to be a \textbf{standard Young tableau}.  If a filling is merely row-standard and column-semistandard, the resulting array is said to be a \textbf{semistandard Young tableau}.  We denote the set of all standard Young tableaux of shape $\lambda$ by $S(\lambda)$, and the set of all semistandard Young tableaux of shape $\lambda$ and content $\mu$ by $S(\lambda,\mu)$.  For a comprehensive introduction to Young tableaux, see Fulton \cite{Fulton}.

Our primary focus are inversions of Young tableaux, as first introduced by Fresse \cite{Fresse} to calculate the Betti numbers of Springer fibers in type A.\footnote{An entirely distinct notion also referred to as ``tableau inversions" has been presented by Shynar \cite{Shynar}.}  Adopting the terminology of Beagley and the author \cite{BD, Drube}, let $\tau$ be a row-standard filling and let $i,j$ be a pair of entries from the same column of $\tau$.  Let $i_k$ denote the entry precisely $k$ boxes to the right of $i$ in $\tau$, and let $j_k$ denote the entry precisely $k$ boxes to the right of $j$ in $\tau$.  Then $(i,j)$ form an \textbf{inversion pair} of $\tau$ if $i < j$ and one of the following holds:

\begin{enumerate}
\item Either $i_1$ or $j_1$ doesn't exist, and $i$ appears below $j$.
\item $i_1 > j_1$.
\item $i_k = j_k$ for all $1 \leq k \leq n$, either $i_{n+1}$ or $j_{n+1}$ doesn't exist, and $i$ appears below $j$.
\item $i_k = j_k$ for all $1 \leq k \leq n$, and $i_{n+1} > j_{n+1}$. 
\end{enumerate}

Taken together, the four conditions above identity an instance where the column containing $i,j$ is not in the correct (non-decreasing) order relative to the first column on its right where an appropriate order may be discerned.  As such, inversion pairs quantify how far a row-standard tableau is from also being column semistandard.  Notice that the absolute vertical placement of $i$ and $j$ within $\tau$ doesn't necessarily determine whether $(i,j)$ constitutes an inversion pair; this is an entirely ``local" phenomenon that only concerns the ordering of a column relative to the ordering of more rightward columns.  See Figure \ref{fig: inverted tableaux examples} for examples of row-standard fillings along with their inversion pairs.

\begin{figure}[ht!]
\centering
\begin{ytableau}
4 & 7 & 8 & 9 \\
2 & 3 & 6 \\
1 & 5
\end{ytableau}
\hspace{.5in}
\begin{ytableau}
3 & 5 & 6 & 7 \\
1 & 5 & 8 & 9\\
2 & 4 & 8
\end{ytableau}
\caption{A row-standard filling (left) with inversion pairs $(1,2)$, $(5,7)$, $(6,8)$, and a row-standard filling (right) with inversion pairs $(1,2)$, $(1,3)$, $(4,5)$, $(4,5)$.}
\label{fig: inverted tableaux examples}
\end{figure}

Observe that if $\tau$ possesses repeated entries, then it is possible for a single inversion pair to appear multiple times within $\tau$.  The existence of repeated inversion pairs only requires that the entries involved in the repeated inversion pairs $(i,j)$ aren't the same instances of $i$ and $j$.  As such, we will always list the inversion pairs of $\tau$ with multiplicity.  If it ever becomes necessary to specify that an inversion pair $(i,j)$ involves entries from the $k^{th}$ column of $\tau$, we adopt the notation $(i,j)^k$.  Furthermore, if we need to specify which instances of a repeated entry are involved in an inversion pair, we will use alphabetic subscripts $i_a$, $i_b$, etc. that have been indexed from top-to-bottom in the $k^{th}$ column of $\tau$.

If the row-standard filling $\tau$ contains precisely $k$ inversion pairs, we write $\inv(\tau) = k$ and say that $\tau$ is a \textbf{k-inverted semistandard Young tableau}.  More precisely, if $\inv(\tau) = k$ and $\tau$ lacks repeated entries we say that $\tau$ is a \textbf{k-inverted standard Young tableau}.  We denote the set of all $k$-inverted standard Young tableaux of shape $\lambda$ by $S_k(\lambda)$, and the set of all inverted standard Young tableaux of shape $\lambda$ by $I(\lambda) = \bigcup_k S_k(\lambda)$.  Similarly, we denote the set of all $k$-inverted semistandard Young tableaux with given $\lambda$ and $\mu$ by $S_k(\lambda,\mu)$, and the set of all inverted semistandard Young tableaux of given $\lambda$ and $\mu$ by $I(\lambda,\mu) = \bigcup_k S_k(\lambda,\mu)$.  Observe that the traditional notions of standard and semistandard Young tableaux correspond to $S_0(\lambda) = S(\lambda)$ and $S_0(\lambda,\mu) = S(\lambda,\mu)$, as a row-standard filling is also column-(semi)standard if and only if it has exactly zero inversion pairs.

Fresse \cite{Fresse} argued that the columns of any inverted standard Young tableau $\tau$ may be independently reordered to produce a unique column-standard Young tableau that we refer to as the \textbf{standardization} $\st(\tau)$ of $\tau$.  Fresse's work \cite{Fresse} also allows us to conclude that every inverted standard Young tableau is uniquely determined by its standardization and its collection of inversion pairs.  Fixing a standardization $T \in S(\lambda)$, we define $S_k^T(\lambda) = \lbrace \tau \in S_k(\lambda) \ \vert \ \st(\tau) = T \rbrace $ and $I^T(\lambda) = \lbrace \tau \in I(\lambda) \ \vert \ \st(\tau) = T \rbrace$.  The author \cite{Drube} later argued that unique standardizations $\st(\tau)$ also exist for inverted semistandard Young tableau.  Fixing semistandard $T \in S(\lambda,\mu)$, we adopt analogous notations $S_k^T(\lambda,\mu) = \lbrace \tau \in S_k(\lambda,\mu) \ \vert \ \st(\tau) = T \rbrace $ and $I^T(\lambda,\mu) = \lbrace \tau \in I(\lambda,\mu) \ \vert \ \st(\tau) = T \rbrace$.

One final piece of terminology that we will repeatedly use is the \textbf{height order} on entries in an inverted Young tableau.  As originally introduced by the author \cite{Drube}, you may define a complete order $\blacktriangleleft$ on the entries $\lbrace a_i \rbrace$ in each column of an inverted Young tableau $\tau$ by working through $\tau$ one column at a time, from right to left, according to the following rules:

\begin{itemize}
\item If either $a_i$ or $a_j$ lacks a entry directly to its right and $a_i$ appears above $a_j$, then $a_i \blacktriangleleft a_j$.
\item If $b_i$ is directly to the right of $a_i$, $b_j$ is directly right of $a_j$, and $b_i < b_j$, then $a_i \blacktriangleleft a_j$.
\item If $b_i$ is directly to the right of $a_i$, $b_j$ is directly right of $a_j$, and $b_i = b_j$ with $b_i \blacktriangleleft b_j$, then $a_i \blacktriangleleft a_j$.
\end{itemize}

If $c \in \tau$ is the $k^{th}$ smallest element in its column relative to the height order, we say that $c$ has \textbf{height} $k$ and write $\hgt(c)=k$.  The height order is constructed to tell us how the entries in a column need to be ordered (relative to the columns on its right) if one wants to completely avoid inversion pairs in that column.  This observation allows us to succinctly recast the definition of inversion pair:

\begin{proposition}[\textbf{Proposition 5 of \cite{Drube}}]
\label{thm: inversions from height order}
Let $\tau$ be an inverted Young tableau and let $i,j$ be distinct entries from the same column of $\tau$.  Then $(i,j)$ is an inversion pair of $\tau$ if and only if $i<j$ and $j \blacktriangleleft i$.
\end{proposition}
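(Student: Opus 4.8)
The plan is to show the claimed equivalence by directly unwinding the two definitions — the four-case definition of "inversion pair" and the three-rule recursive definition of the height order $\blacktriangleleft$ — and verifying that the condition "$i<j$ and $j \blacktriangleleft i$" matches the disjunction of cases (1)--(4) exactly. Since both definitions proceed column-by-column from right to left and are ultimately determined by comparing the entries immediately to the right of $i$ and $j$ (and, recursively, entries further right in case of ties), the natural tool is induction on the number of columns strictly to the right of the column containing $i$ and $j$, or equivalently on the length of the common "run" $i_1 = j_1, i_2 = j_2, \dots$ of equal entries appearing to the right.

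\medskip
\noindent\emph{First} I would fix the column $C$ (the $k^{\text{th}}$, say) containing the distinct entries $i,j$, and assume without loss of generality $i<j$; if $i>j$ then $(i,j)$ is not an inversion pair by definition and also $j \blacktriangleleft i$ fails to be the relevant condition, so this case is vacuous and we only need the biconditional under the hypothesis $i<j$. \emph{Second}, I would argue that the restriction of $\blacktriangleleft$ to column $C$ depends only on the entries of the columns strictly to the right of $C$ (this is immediate from the three defining rules, each of which refers only to the entry directly to the right and, recursively, to $\blacktriangleleft$ in that rightward column — and by an inner induction the height order on the rightward columns is itself well-defined). \emph{Third}, I would set up the induction on $n \geq 0$, where $n$ is the largest index such that $i_\ell = j_\ell$ for all $1 \le \ell \le n$ (with $n=0$ meaning $i_1 \ne j_1$ or one of them fails to exist at position $1$), and carefully note that the $i_\ell$ and $j_\ell$ sit in a common column for each $\ell$, so "$i_\ell = j_\ell$" and the secondary comparison "$i_\ell \blacktriangleleft j_\ell$" both make sense.

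\medskip
\noindent In the \textbf{base case} $n=0$: either (a) $i_1$ or $j_1$ fails to exist, in which case rule~1 of the height order says $j \blacktriangleleft i$ iff $j$ appears above $i$, i.e. iff $i$ appears below $j$ — matching case~(1) of the inversion definition; or (b) both $i_1,j_1$ exist and $i_1 \ne j_1$, in which case rule~2 says $j \blacktriangleleft i$ iff $j_1 < i_1$, i.e. iff $i_1 > j_1$ — matching case~(2). In the \textbf{inductive step} $n \ge 1$: both $i_1,j_1$ exist and $i_1 = j_1$, so $i,j$ are comparable under $\blacktriangleleft$ via rule~3, which gives $j \blacktriangleleft i \iff j_1 \blacktriangleleft i_1$. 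Now $i_1, j_1$ are distinct \emph{instances} of the same value sitting in a common column (the $(k{+}1)^{\text{th}}$), and by the inductive hypothesis applied to that column — where the relevant run of equal entries to their right has length $n-1$ — we get $j_1 \blacktriangleleft i_1$ iff the pair $(i_1,j_1)$ is an inversion pair, which unwinds precisely to "$i_{n+1}$ or $j_{n+1}$ doesn't exist and $i_1$ below $j_1$ (equivalently $i$ below $j$, since directly-right preserves vertical position)" or "$i_{n+1} > j_{n+1}$" — matching cases~(3) and~(4). Stringing these equivalences together closes the induction.

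\medskip
\noindent \textbf{The main obstacle} I anticipate is bookkeeping rather than conceptual: one must be scrupulous that the secondary clause in rule~3 ("$b_i \blacktriangleleft b_j$" when $b_i = b_j$) refers to the height order \emph{on the column to the right}, which is itself only defined once that column has been processed — so the right-to-left processing order and the nested induction (outer on column index, or a single induction on "number of columns to the right") must be set up so there is no circularity. A secondary subtlety is checking that "$i$ appears below $j$" in the original definition is genuinely equivalent to "$i_1$ appears below $j_1$" (and likewise at deeper levels), which holds because passing to the entry directly to the right is a vertical-position-preserving bijection between the relevant sets of rows — a one-line observation, but one worth stating explicitly so the translation between the two definitions is airtight. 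Everything else is a direct, if slightly tedious, matching of four cases against three rules.
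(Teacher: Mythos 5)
This proposition is quoted by the paper from Proposition 5 of \cite{Drube} and is not proved here, so your argument must stand on its own; its overall plan (unwind both definitions and induct on the length of the common run $i_1=j_1,\hdots$) is the natural one, and your base case is fine. The problem is the inductive step. There you invoke the inductive hypothesis --- i.e.\ the proposition itself for a shorter run --- to conclude ``$j_1 \blacktriangleleft i_1$ iff $(i_1,j_1)$ is an inversion pair.'' But in this step $i_1$ and $j_1$ are, by your own setup, two instances of the \emph{same} value, and an inversion pair requires a strict inequality of values; so $(i_1,j_1)$ is never an inversion pair, and your asserted biconditional would force $j_1 \not\blacktriangleleft i_1$ unconditionally, which is false in general (the height order restricted to equal-valued entries of a column is a nontrivial order --- that is exactly what Figure \ref{fig: entries needed to specify inversions} trades on). More to the point, the proposition is vacuously true for an equal-valued pair (both ``inversion pair'' and ``$i_1<j_1$'' fail), so the inductive hypothesis gives no information whatsoever about whether $j_1 \blacktriangleleft i_1$ --- which is precisely the piece of information your step needs. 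The induction as formulated therefore does not close.

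The repair is to induct on a statement about the height order alone, with the value comparison stripped out: for any two \emph{distinct cells} $a,b$ in the same column, $b \blacktriangleleft a$ holds iff one of the four conditions (1)--(4) holds with $a,b$ in the roles of $i,j$ and with the hypothesis $a<b$ deleted. That statement does recurse correctly through rule 3 of the height order (equal right-neighbors pass to the next column with a run shorter by one), and the proposition then follows in one line by intersecting with the condition $i<j$. While fixing this you should also make explicit a point you currently use silently: the three rules defining $\blacktriangleleft$ are stated as sufficient conditions, so your repeated ``rule $k$ says $j\blacktriangleleft i$ iff $\hdots$'' needs the (easy, but worth one sentence) observation that the rules are mutually exclusive and exhaustive on each pair, so that the order is total and each rule can be read as a biconditional.
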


\subsection{Outline of Results}
\label{subsec: outline of results}

This paper is composed of three distinct yet interrelated pieces.  In Section \ref{sec: generating functions} we present a series of foundational results that formalize the theory of inverted semistandard Young tableau beyond what was already presented by Fresse \cite{Fresse} and the author \cite{Drube}.  This includes propositions describing what information is necessary to uniquely identify an inverted semistandard tableau from its standardization (Proposition \ref{thm: inversions uniquely identify}), and how any inverted semistandard tableau may be obtained from its standardization via a finite series of ``partial row transpositions" (Proposition \ref{thm: partial row transpositions generate}).  Section \ref{sec: generating functions} culminates with Theorem \ref{thm: fixed standardization generating function}, which gives a generating function $\chi^T(q)$ for the numbers $\vert S_k^T (\lambda,\mu) \vert$ of $k$-inverted semistandard Young tableaux with fixed standardization $T \in S(\lambda,\mu)$.  The $\dep$ and $\depm$ operators below represent positive integers known as ``inversions depths" that are easily read from $T$.

\begin{theorem}[Theorem \ref{thm: fixed standardization generating function}]
Let $T \in S(\lambda,\mu)$ be a semistandard Young tableaux.  Then:
$$\chi^T(q) \ = \ \displaystyle{\sum_k \vert S_k^T(\lambda,\mu) \vert q^k \ = \ \frac{\prod_{i,j} [\depm(a_{ij})+1]_q}{\prod_{i,j} [\depm(a_{ij}) - \dep(a_{ij}) + 1]_q}}$$
\end{theorem}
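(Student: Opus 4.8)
The plan is to realize $I^T(\lambda,\mu)$ as an explicit family of fillings, to decompose $\inv$ into statistics each of which sees only one column (together with the columns to its right), and to evaluate the generating function of each such statistic by a short $q$-rook-polynomial computation.

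I would first identify $I^T(\lambda,\mu)$ with the set of row-standard fillings $\tau$ of shape $\lambda$ and content $\mu$ such that, for every column index $j$, the multiset of entries in column $j$ of $\tau$ equals the multiset of entries in column $j$ of $T$. Indeed, sorting each column of such a $\tau$ into increasing order reproduces $T$ — columns of a semistandard tableau being strictly increasing, the column-standard reordering is the unique one — so $\st(\tau)=T$; the reverse inclusion is immediate; and Propositions~\ref{thm: inversions uniquely identify} and~\ref{thm: partial row transpositions generate} guarantee that nothing else lies in $I^T(\lambda,\mu)$. Hence $\chi^T(q)=\sum_\tau q^{\inv(\tau)}$ over this family. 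From the four defining conditions of an inversion pair, or from Proposition~\ref{thm: inversions from height order}, one has $\inv(\tau)=\sum_j\inv_j(\tau)$, where $\inv_j(\tau)$ counts the inversion pairs with both entries in column $j$, and moreover $\inv_j(\tau)$ and the entire height order on column $j$ depend only on columns $j,j+1,\dots$ of $\tau$.

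I would then build $\tau$ from right to left, one column at a time: with the arrangements of columns $j+1,\dots$ already fixed, pick any arrangement of column $j$ that is row-standard against column $j+1$ (matching up the entries of columns $j$ and $j+1$ in increasing order always yields a valid option, since $T$ itself is row-standard, so the construction never stalls; and it plainly involves no over-counting). Writing $\chi^T_j(q)$ for the generating function, by $\inv_j$, over the arrangements of column $j$ admissible against a fixed column $j+1$, this bijection yields $\chi^T(q)=\prod_j\chi^T_j(q)$ provided one knows that $\chi^T_j(q)$ does not in fact depend on the fixed arrangement of column $j+1$. The crux is therefore the per-column identity
$$\chi^T_j(q)\ =\ \prod_i\frac{[\depm(a_{ij})+1]_q}{[\depm(a_{ij})-\dep(a_{ij})+1]_q}\qquad(i\text{ over the cells of column }j),$$
which subsumes that independence; multiplying over $j$ then proves the theorem. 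To establish it, write column $j$ of $T$ as $c_1<\dots<c_h$ and consider the arrangement placing $c_{\sigma(r)}$ in row $r$. Using Proposition~\ref{thm: inversions from height order} — and the fact that the height order on a column compares entries by the \emph{numerical} values of their right-hand neighbours (column $j+1$ of $T$ having distinct entries, so that no ties occur) — one finds $\inv_j=\inv(v)$, where $v_k$ is the rank, among the entries of column $j+1$, of the entry lying immediately right of $c_k$; and admissibility of the arrangement against column $j+1$ becomes precisely $v_k\ge m_k$ for all $k$, with $m_k=\min\{t:(t\text{-th smallest entry of column }j+1)>c_k\}$. Here $m_1\le\dots\le m_h\le h$, $m_k\le k$, and, crucially, $m_k$ is read off from $T$ alone; so $\chi^T_j(q)=\sum_v q^{\inv(v)}$ over permutations $v$ of $\{1,\dots,h\}$ with $v_k\ge m_k$, manifestly independent of how column $j+1$ was arranged. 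A short induction on $h$, conditioning on $v_h$ (each choice $v_h=s$ contributes a factor $q^{h-s}$ and leaves the analogous problem for $c_1<\dots<c_{h-1}$), evaluates this sum as $\prod_k[k-m_k+1]_q$, and unwinding the definitions of the inversion depths rewrites the latter as the displayed product over cells.

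The step I expect to be the main obstacle is the per-column identity, on two fronts. First, the translation of $\inv_j$ into $\inv(v)$ must be made carefully: it depends on the height order ranking right-hand neighbours by numerical value rather than by their own heights, and this is exactly what makes $\chi^T_j(q)$ blind to the columns on the right — an invariance that looks wrong if the translation is botched. Second, although the $q$-count $\sum_{v:\,v_k\ge m_k}q^{\inv(v)}=\prod_k[k-m_k+1]_q$ is itself routine, reconciling its closed form with $\prod_i[\depm(a_{ij})+1]_q/[\depm(a_{ij})-\dep(a_{ij})+1]_q$ requires the precise bookkeeping behind $\dep$ and $\depm$: the individual cell quotients are generally not polynomials, so the definitions of the inversion depths must be arranged so that, within each column, these quotients amalgamate into the genuine polynomial $\prod_k[k-m_k+1]_q$.
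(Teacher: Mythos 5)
There is a genuine gap, and it sits exactly where the semistandard case differs from the standard one. Your per-column analysis assumes that the entries of column $j$ of $T$ are distinct ($c_1<\dots<c_h$) and that column $j+1$ has distinct entries ``so that no ties occur'' in the height order. Under this paper's conventions that assumption is false: a semistandard tableau here is row-standard and column-\emph{semi}standard, so repeated values occur \emph{within columns} (see Figures 2 and 4 of the paper). Your opening justification (``columns of a semistandard tableau being strictly increasing'') already contradicts this, and the later translation of $\mathrm{inv}_j$ into $\inv(v)$ for a permutation $v$ of $\{1,\dots,h\}$ with $v_k\ge m_k$ breaks down on both fronts: when column $j+1$ has repeats the height order is defined recursively through the third rule (ties by $\blacktriangleleft$ of the right-hand neighbours), not by numerical comparison alone; and when column $j$ itself has repeats, the objects being counted are multiset rearrangements (with entry-specific inversion pairs counted with multiplicity), not permutations.

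Concretely, your claimed per-column evaluation $\prod_k[k-m_k+1]_q$ cannot ``amalgamate'' into the theorem's quotient. Take the paper's own example $\lambda=1^4$, $\mu=1^22^2$: the theorem (and Theorem 13 of \cite{Drube}) gives $\chi^T(q)=\binom{4}{2}_q=1+q+2q^2+q^3+q^4$, which is not log-concave and hence is not a product of $q$-integers, whereas your count (all $m_k=1$) would give $[4]_q!$, overcounting because it distinguishes the two copies of each value. This is precisely why the denominator $\prod_{i,j}[\depm(a_{ij})-\dep(a_{ij})+1]_q$ appears: the paper's proof inserts, within each column, all $c_k$ copies of a value $k$ simultaneously into the height order of the smaller values, getting $\binom{\dep(k)+c_k}{c_k}_q$ per value (a partitions-in-a-box count), and then rewrites this $q$-binomial via $\dep(k_m)+m-1=\depm(k_m)$. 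Your column-by-column, right-to-left factorization and the independence of $\chi^T_j(q)$ from the arrangement of the columns to the right are in the same spirit as the paper's argument (which deduces independence from the fact that inversions depend only on height order), but as written your argument proves only the standard specialization (Corollary \ref{thm: generating function standard corollary}); to repair it you would have to replace the permutation model by a multiset model and redo the $q$-count, which is essentially the paper's proof.
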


Section \ref{sec: generalized ballot numbers} explores the relationship between inverted standard Young tableau and various properties of higher-dimensional Dyck paths, expanding upon the well-known bijection between standard Young tableau $S(\lambda)$ of shape $\lambda$ and Dyck paths $\mathcal{D}_\lambda$ of the same shape.  In the two-row rectangular case, this allows us to give a non-$T$-specific generating function $\xi(q)$ for the $\vert S_k(\lambda) \vert$ in terms of the celebrated Ballot numbers $B(a,b)$, a result that eventually reappears as Corollary \ref{thm: two-row rectangular generating function as ballot numbers}:

\begin{corollary}[Corollary \ref{thm: two-row rectangular generating function as ballot numbers}]
Let $\lambda = (n,n)$.  Then the $S_k(\lambda)$ have generating function:
$$\xi(q) \ = \ \sum_k \vert S_k(\lambda) \vert q^k \ = \ \sum_i B(n-1,n-i) (1+q)^i \ = \ \sum_i \frac{i}{n} \binom{2n - i - 1}{n-i} (1+q)^i$$
\end{corollary}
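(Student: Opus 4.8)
The plan is to obtain the corollary as a specialization of the general machinery for two-row shapes developed earlier in the paper, combined with the identity for Ballot numbers $B(a,b) = \frac{a-b+1}{a+1}\binom{a+b}{b}$ (equivalently $B(n-1,n-i) = \frac{i}{n}\binom{2n-i-1}{n-i}$, which is the stated closed form). The second equality in the display is therefore purely a routine rewrite of the Ballot number in its standard binomial form, so the real content is the first equality $\xi(q) = \sum_i B(n-1,n-i)(1+q)^i$.

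First I would invoke the machinery relating $S_k(\lambda)$ for $m$-row $\lambda$ to returns-to-ground statistics on $m$-dimensional Dyck paths, specialized to $m = 2$ and $\lambda = (n,n)$. The key structural fact I expect to use is that for a fixed standardization $T \in S(\lambda)$ corresponding to a Dyck path $P \in \mathcal{D}_\lambda$, the generating function $\chi^T(q)$ from Theorem~\ref{thm: fixed standardization generating function} collapses in the two-row standard case: each column of $T$ contributes a factor depending only on the relevant inversion depths, and for a two-row shape the total should reduce to $(1+q)^{r(P)}$, where $r(P)$ is the number of ``returns to ground'' of $P$ (with the last, forced return excluded, hence the exponent running over the appropriate range). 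Then
$$\xi(q) \;=\; \sum_{T \in S(\lambda)} \chi^T(q) \;=\; \sum_{P \in \mathcal{D}_{(n,n)}} (1+q)^{r(P)} \;=\; \sum_i \#\{P \in \mathcal{D}_{(n,n)} : r(P) = i\}\,(1+q)^i.$$

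Next I would identify the coefficient $\#\{P \in \mathcal{D}_{(n,n)} : r(P) = i\}$ with $B(n-1,n-i)$. A classical decomposition of a Dyck path of semilength $n$ into its $i$ irreducible ``arches'' (the pieces between consecutive returns to ground) shows that the number of such paths with exactly $i$ returns is counted by a Ballot-type number; concretely, contracting each arch gives a bijection with lattice paths from $(0,0)$ staying weakly below the diagonal that are counted by $B(n-1,n-i)$. I would state this bijection cleanly — removing the first up-step and the matching down-step of each arch, or equivalently recording the arch lengths as a composition of $n$ into $i$ parts weighted by the Catalan-like structure — and cite the standard fact that these are enumerated by $B(n-1,n-i)$. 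Substituting this count into the sum above yields the first equality, and the Ballot number identity yields the second.

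The main obstacle I anticipate is the first step: verifying that $\chi^T(q)$ for a two-row standard tableau genuinely simplifies to $(1+q)^{r(P)}$, i.e. that every inversion-depth factor $[\depm(a_{ij})+1]_q / [\depm(a_{ij}) - \dep(a_{ij}) + 1]_q$ appearing in Theorem~\ref{thm: fixed standardization generating function} is either $1$ or $(1+q)$, and that the number of factors equal to $(1+q)$ is exactly the number of (non-final) returns to ground of the associated Dyck path. This requires translating the ``return to ground'' condition on $P$ into a statement about which columns of $T$ admit a single nontrivial partial row transposition — plausibly, a return to ground corresponds precisely to a column of $T$ of the form $\begin{smallmatrix} a \\ a+1\end{smallmatrix}$ sitting at a position where the two rows have momentarily equal length, which is exactly where swapping is possible and contributes a factor of $(1+q)$. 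Pinning down this correspondence rigorously, likely via the height-order description of inversions in Proposition~\ref{thm: inversions from height order}, is where the argument must be done carefully; the remaining steps are bijective folklore about Dyck paths and an algebraic identity.
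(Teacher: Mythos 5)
Your skeleton is essentially the paper's: Proposition~\ref{thm: returns to ground vs inversion depth} shows that for a two-row shape an entry $i$ has $\dep(i)=1$ exactly when the associated Dyck path returns to the diagonal at $v_i$, Theorem~\ref{thm: two-row generating function as returns to ground} then gives $\xi(q)=\sum_i \vert\mathcal{D}_\lambda(i)\vert(1+q)^i$ via Corollary~\ref{thm: generating function standard corollary}, and the corollary follows by citing the classical count $\vert\mathcal{D}_n(i)\vert=B(n-1,n-i)$ (the paper cites this; your arch-decomposition rederivation is fine but not needed). However, your bookkeeping of returns has a concrete off-by-one error: the final, ``forced'' return at $(n,n)$ must be counted, not excluded. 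The entry $2n$ always sits at the bottom of the last column with one entry above it and no column to its right, so $\dep(2n)=1$ and it always contributes a factor $(1+q)$ to $\chi^T(q)$ (the last column is always the site of an admissible partial row transposition). For the zigzag tableau with columns $\lbrace 2i-1,2i\rbrace$ one gets $\chi^T(q)=(1+q)^n$, while its path has $n$ returns including the one at $(n,n)$; under your convention the exponent would be $n-1$. Worse, the two halves of your argument then fail to compose: with the last return excluded, the number of paths of semilength $n$ with $r(P)=i$ is $B(n-1,n-1-i)$, not $B(n-1,n-i)$, so you would not recover the stated formula. The paper's convention ($\ret(P)$ counts every lattice point $(x,x)$ with $x>0$, endpoint included) is what makes the identification with $B(n-1,n-i)$ correct; compare Example~\ref{ex: two-row generating function}, where the $(1+q)^4$ term carries coefficient $B(3,0)=1$, the zigzag path.

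Your tentative criterion for where the $(1+q)$ factors arise is also not right as stated: a return at $v_{2m}$ does not mean the $m$-th column has consecutive entries $a,a+1$. For $T$ with rows $1\,2\,3$ and $4\,5\,6$ the only return is at $(3,3)$, yet the third column is $\lbrace 3,6\rbrace$. The correct statement (and the paper's proof of Proposition~\ref{thm: returns to ground vs inversion depth}) is that $\dep(i)=1$ if and only if $i=2m$ and $T[1,\hdots,2m]$ fills an $m\times 2$ rectangle, i.e.\ $v_i=(m,m)$; no condition on the values in that column is involved. Since in a two-row standard tableau every entry has $\dep$ equal to $0$ or $1$ and all entries are distinct (so $\depm=\dep$), the ``collapse'' you worried about is immediate from Corollary~\ref{thm: generating function standard corollary}: $\chi^T(q)=\prod_i[\dep(i)+1]_q=(1+q)^{\ret(P)}$, with no need for the height-order analysis. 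With the return count corrected and the criterion stated via inversion depth, your argument coincides with the paper's.
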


That result is generalized to arbitrary $m$-row tableaux shapes in Theorem \ref{thm: m-row generating function as returns to ground} by relating our generating function $\xi(q)$ to ``generalized ballot numbers" $\vert \mathcal{D}_\lambda(k_1,\hdots,k_{m-1}) \vert$, which represent the number of Dyck paths in $\mathcal{D}_\lambda$ possessing various numbers of ``higher-dimensional returns-to-ground":

\begin{theorem}[Theorem \ref{thm: m-row generating function as returns to ground}]
Let $\lambda = (\lambda_1,\hdots,\lambda_m)$.  Then the $S_k(\lambda)$ have generating function:
$$\xi(q) \ = \ \sum_k \vert S_k(\lambda) \vert q^k \ = \ \sum_{(i_1,\hdots,i_{m-1})} \left( \vert \mathcal{D}_\lambda (i_1,\hdots,i_{m-1}) \vert \prod_{j=1}^{m-1} [j+1]_q^{i_j} \right)$$
\end{theorem}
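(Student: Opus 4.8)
The plan is to combine the generating function $\chi^T(q)$ from Theorem~\ref{thm: fixed standardization generating function} with a combinatorial stratification of $S(\lambda)$ by the higher-dimensional ``returns-to-ground'' statistic. First I would recall that, in the standard (as opposed to semistandard) setting, every $\tau \in I(\lambda)$ decomposes uniquely as $\tau \in S_k^T(\lambda)$ for $T = \st(\tau)$, so that
\[
\xi(q) \ = \ \sum_k |S_k(\lambda)| q^k \ = \ \sum_{T \in S(\lambda)} \chi^T(q).
\]
Thus the whole problem reduces to understanding how $\chi^T(q)$ depends on $T$, and then summing over $T$. In the standard case there are no repeated entries, so the inversion depths $\dep(a_{ij})$ and $\depm(a_{ij})$ should simplify: I expect $\depm(a_{ij}) - \dep(a_{ij}) + 1 = 1$ (or the two depths coincide in a way that collapses the denominator), leaving $\chi^T(q) = \prod_{i,j}[\depm(a_{ij})+1]_q$ — a product of $q$-integers indexed by the boxes of $T$, where the exponent attached to each box is read off from how far down one may slide an entry. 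I would then verify this simplification carefully against the definitions of $\dep$ and $\depm$ as they appear in Section~\ref{sec: generating functions}.

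Next I would set up the bijection between $S(\lambda)$ and the set $\mathcal{D}_\lambda$ of $m$-dimensional Dyck paths of shape $\lambda$: reading the entries $1,2,\dots,N$ in order, record in which row of $T$ each entry sits, producing a lattice word whose prefixes are always ballot (the column-standard condition is exactly the ballot/Yamanouchi condition). Under this correspondence, I would argue that the multiset of exponents $\{\depm(a_{ij}) + 1 : (i,j) \in T\}$ that appears in the product formula for $\chi^T(q)$ is controlled entirely by the ``return-to-ground'' structure of the associated path — specifically, that each time the path returns to the hyperplane where the first $j$ coordinates are equal (a ``level-$j$ return to ground''), it contributes a factor $[j+1]_q$, because an entry at such a position can be freely permuted among $j+1$ rows without creating inversions with the columns to its right. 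This is the conceptual heart of the argument: translating the local ``height order'' combinatorics governing $\depm$ into the global path statistic $|\mathcal{D}_\lambda(i_1,\dots,i_{m-1})|$.

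The hard part will be this middle step — showing precisely that $\chi^T(q) = \prod_{j=1}^{m-1} [j+1]_q^{r_j(T)}$ where $r_j(T)$ is the number of level-$j$ returns to ground in the path associated to $T$. I would prove it by induction on the number of columns of $\lambda$ (equivalently, by peeling off the rightmost column): adding a new rightmost column refines the height order on the previously-considered columns and introduces new entries whose $\depm$ values are determined by how the new column's entries align with the ballot structure. One must check that the bookkeeping matches the recursive structure of Dyck paths, so that a box contributing $[j+1]_q$ corresponds exactly to a step at which the path sits at a level-$j$ ground state. Once that identity is in hand, the theorem follows immediately: summing over $T \in S(\lambda)$, group the tableaux by their return-vector $(i_1,\dots,i_{m-1}) = (r_1(T),\dots,r_{m-1}(T))$, there being $|\mathcal{D}_\lambda(i_1,\dots,i_{m-1})|$ of them, to obtain
\[
\xi(q) \ = \ \sum_{T \in S(\lambda)} \prod_{j=1}^{m-1}[j+1]_q^{r_j(T)} \ = \ \sum_{(i_1,\dots,i_{m-1})} \left( |\mathcal{D}_\lambda(i_1,\dots,i_{m-1})| \prod_{j=1}^{m-1} [j+1]_q^{i_j} \right),
\]
which is the claimed formula. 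I would close by noting that the two-row case $m = 2$ recovers Corollary~\ref{thm: two-row rectangular generating function as ballot numbers}, since then there is a single return statistic $i_1$, the factor is $[2]_q^{i_1} = (1+q)^{i_1}$, and $|\mathcal{D}_{(n,n)}(i_1)|$ is the classical ballot number $B(n-1,n-i_1)$.
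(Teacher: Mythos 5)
Your skeleton is essentially the paper's: write $\xi(q)=\sum_{T\in S(\lambda)}\chi^T(q)$, observe that in the standard case $\dep(a_{ij})=\depm(a_{ij})$ so that Corollary \ref{thm: generating function standard corollary} gives $\chi^T(q)=\prod_{i,j}[\dep(a_{ij})+1]_q$, translate the multiset of inversion depths into the return-to-ground structure of the associated $m$-dimensional Dyck path, and group tableaux by their return vector. The middle step you flag as ``the hard part'' is exactly the paper's Theorem \ref{thm: d-degree returns to ground vs inversion depth}, but the paper does not need an induction on columns: it follows directly from Proposition \ref{thm: Dyck path lattice points vs subtableau shape} (the shape of $T[1,\hdots,k]$ equals the lattice point $v_k$), which gives $\dep(k)=\gamma_k-1$ where $\gamma_k$ is the number of rows of $T[1,\hdots,k]$ whose length equals the column of $k$; that computation is shorter than the column-peeling you propose. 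One caution about your articulation of the statistic: ``each time the path returns to the hyperplane where the first $j$ coordinates are equal'' is not the definition underlying $\mathcal{D}_\lambda(i_1,\hdots,i_{m-1})$. A $d$-degree return to ground at $v_k$ is a maximal run of $d+1$ equal coordinates ending at the coordinate that was just incremented, and that run need not reach the first coordinate. For instance, for $\lambda=(3,2,2)$ and the tableau with rows $\lbrace 1,2,3\rbrace$, $\lbrace 4,5\rbrace$, $\lbrace 6,7\rbrace$, the entry $7$ has $\dep(7)=1$ and the path has a $1$-degree return at $v_7=(3,2,2)$ even though the first coordinate differs from the other two; a count restricted to equalities beginning at the first coordinate would miss such boxes and the identity $\chi^T(q)=\prod_{j}[j+1]_q^{i_j}$ would fail for $m\geq 3$. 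With the paper's definition of $d$-degree returns (in any coordinate), your final grouping argument goes through exactly as you state, and your closing remark correctly recovers the two-row Ballot-number specialization.
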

 
In Section \ref{sec: direct enumeration} we utilize the results of earlier sections to directly enumerate $k$-inverted tableau in several cases that have not previously been attempted.  Theorem \ref{thm: two-row enumeration} explicitly enumerates $k$-inverted semistandard Young tableaux for an arbitrary two-row shape $\lambda=a^1 b^1$, generalizing earlier enumerations \cite{Fresse,Drube} of two-row inverted tableaux in the standard specialization.  Theorem \ref{thm: two-column generating function rectangular} then provides a generating function for the $\vert S_k(\lambda) \vert$ in the case of two-column rectangular shapes $\lambda = 2^n$.

As a whole, notice that this paper follows the purely combinatorial approach of Beagley and the author \cite{BD,Drube}, and will only briefly mention several potential geometric implications for Springer varieties (and Spaltenstein varieties) that may follow from the original work of Fresse \cite{Fresse}.

\section{Generating Functions for Inverted Semistandard Young Tableaux}
\label{sec: generating functions}

Before proceeding to our central enumerative results, we pause to lay the necessary combinatorial groundwork.  We begin this foundational section with a thorough treatment of ``formal" results that were not previously addressed by the author \cite{Drube}.  This formalism accomplishes for inverted semistandard tableaux what Fresse has already established for inverted standard tableaux \cite{Fresse}.  More importantly, it provides us with the vocabulary needed to precisely define an explicit generating function $\chi^T(q) = \sum_k \vert S_k^T(\lambda,\mu)\vert$ for the number of $k$-inverted semistandard Young tableaux with a fixed standardization $T \in S(\lambda,\mu)$.  This section closes with a series of corollaries demonstrating the usefulness of our generating function.

It should be noted that, although generating functions for the $\vert S_k^T(\lambda,\mu) \vert$ have yet to appear anywhere in the literature, the specialization of our $\chi^T(q)$ to the case of standard Young tableaux recovers Fresse's generating function for the $\vert S_k^T(\lambda) \vert$ (see Propositions 2.3b and 4.2 of \cite{Fresse}).  Yet even in the case of non-repeated entries, our terminology is more direct (avoiding the reference of appropriately defined sub-tableau) and thus allows for more quickly calculable generating functions.

As another brief aside, for the rest of this section we will often need to specify which instance of a repeated entry we are dealing with in a tableau.  We reserve the term ``entry" if we want to refer to a (potentially repeated) integer in a specific cell, and will use the notation $a_{ij}$ if we want to emphasize that the entry is located in the $i^{th}$ row and $j^{th}$ column of our inverted tableau.  If $a_{ij}$ is an instance of the integer $k$, we will call it an ``entry of value $k$" and write $a_{ij} = k$.

Our first foundational result follows most directly from the work of Fresse \cite{Fresse}.  It should be noted that the subsequent proposition may be independently derived via a careful reworking of the ``admissible transposition" procedure that appears in Proposition \ref{thm: partial row transpositions generate}.

\begin{proposition}
\label{thm: inversions uniquely identify}
Take any inverted semistandard Young tableau $\tau \in I(\lambda,\mu)$.  $\tau$ is uniquely identified by its standardization $st(\tau) = T$ alongside a collection of inversion pairs $(a_{ij},b_{ij})$ that specify the entries of $T$ involved in each inversion.
\end{proposition}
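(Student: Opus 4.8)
The plan is to extract from Fresse's analysis \cite{Fresse} exactly two facts and combine them. First, a column-reordering (standardization) statement: the columns of any row-standard filling $\tau$ can be independently sorted into non-decreasing order, and this process is reversible in the sense that a row-standard filling is recovered from its standardization $T$ together with the prescription of \emph{which permutation was applied to each column}. Second, a characterization of which column-permutations can actually arise from a row-standard $\tau$, together with the claim that the data of the inversion pairs $(a_{ij}, b_{ij})$ — listed with multiplicity and with each entry identified as a specific cell of $T$ — encodes precisely that permutation data. Granting these, injectivity of $\tau \mapsto (T, \{(a_{ij},b_{ij})\})$ is immediate: two inverted semistandard tableaux with the same standardization and the same inversion-pair data must induce the same permutation of each column of $T$, hence coincide cell-by-cell.

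More concretely, I would argue one column at a time, from right to left, invoking the height order $\blacktriangleleft$ and Proposition \ref{thm: inversions from height order}. Fix a column $j$ and suppose inductively that columns $j+1, j+2, \dots$ of $\tau$ are already determined by $(T, \text{inversions})$ — the base case being trivial since the height order on the rightmost relevant cells is read off from $T$ alone. The height order $\blacktriangleleft$ on the entries of column $j$ is then fully determined, because its defining rules only reference the entries directly to the right (which live in already-reconstructed columns) and the height order there. Now the actual top-to-bottom arrangement of column $j$ in $\tau$ is a rearrangement of the multiset of values appearing in column $j$ of $T$; I claim this arrangement is pinned down by knowing, for each unordered pair of \emph{cells} $\{a,b\}$ in column $j$ with distinct values, whether $(a,b)$ (the smaller value first) is an inversion pair — equivalently, by Proposition \ref{thm: inversions from height order}, whether the larger value sits $\blacktriangleleft$-below the smaller in the physical column. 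Since $\blacktriangleleft$ is a total order on the column's cells and the inversion data tells us, for every pair of cells whose \emph{values} differ, the relative physical order versus the $\blacktriangleleft$-order, we can reconstruct the physical order of those cells. Cells with equal values are interchangeable (they do not affect any inversion count, being the ``same instance'' ambiguity the paper warns about), so the resulting tableau is unambiguous as an element of $I(\lambda,\mu)$.

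The one subtlety I would be careful about — and which I expect to be the main obstacle — is the bookkeeping for repeated entries: an inversion pair $(i,j)$ can occur with multiplicity in $\tau$, and the proposition's hypothesis is that the data records each inversion ``by specifying the entries of $T$ involved.'' I need to confirm that tagging inversions by cells \emph{of $T$} (rather than cells of $\tau$, which we are trying to reconstruct) is well-defined and sufficient. This works because standardization acts on each column by a permutation, so a cell of column $j$ in $\tau$ corresponds canonically to a cell of column $j$ in $T$ of the same value; the inversion-pair-with-cell-labels data in the statement is precisely the pushforward of $\tau$'s inversion data under this correspondence, and the reconstruction above runs equally well with $T$-cell labels. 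The remaining routine check is that the reconstructed column is genuinely realizable — i.e., that every assignment of a physical order compatible with the recorded inversion/non-inversion pattern yields a legitimate row-standard tableau whose standardization is $T$ — but this follows since any rearrangement of a column preserves row-standardness trivially at the level of that column, and cross-row conditions are automatic once all columns are rearrangements of $T$'s columns. A one-line alternative, as the paper notes, is to defer entirely to Proposition \ref{thm: partial row transpositions generate}: since every $\tau \in I(\lambda,\mu)$ is reached from $T$ by a sequence of partial row transpositions and each such move changes the inversion-pair multiset in a controlled, reversible way, the multiset of cell-labelled inversion pairs determines the endpoint $\tau$.
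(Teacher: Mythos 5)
Your core argument is correct, but it takes a genuinely different route from the paper's. The paper does not reconstruct $\tau$ column by column: it reduces to the standard case by fixing, for each value $\alpha$, a canonical matching between the copies of $\alpha$ in $\tau$ and in $T$ (copies ordered first by column and, within a column, by height order), re-indexing every entry via $\phi(\alpha_k)=\mu_1+\cdots+\mu_{\alpha-1}+k$, and observing that $\tau\mapsto\widetilde{\tau}$ is then an inversion-preserving bijection between $I^T(\lambda,\mu)$ and those inverted standard tableaux in $I^{\widetilde{T}}(\lambda)$ with no inversion between entries of equal original value; uniqueness is then imported wholesale from Proposition 2.3a of \cite{Fresse}. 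You instead prove the statement directly, by right-to-left induction on columns via Proposition \ref{thm: inversions from height order}, using the (correct, and worth stating explicitly) observation that the height order, viewed as an order on the \emph{cells} of column $j$, depends only on the shape and on the already-reconstructed column $j+1$, so the cell-labelled inversion data determines which value occupies which height and hence the whole column. Your route is self-contained and avoids outsourcing the key step to Fresse, at the price of doing the repeated-value bookkeeping by hand; the paper's reduction buys brevity and makes the matching of repeated copies completely explicit (your ``canonical correspondence'' is precisely the order $\prec$ constructed in the paper's proof).

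Three cautions. First, your closing realizability remark is both unnecessary and incorrectly justified: rearranging one column of a row-standard filling does \emph{not} trivially preserve row-standardness (reorder the first column of the tableau with rows $(1\,2)$ and $(3\,4)$ to put $3$ on top), so not every inversion pattern is realizable; but no such check is needed, since the proposition is purely an injectivity statement and the data you reconstruct from always arises from an actual $\tau$. Second, the ``one-line alternative'' of deferring to Proposition \ref{thm: partial row transpositions generate} is circular as stated, because the paper proves that proposition using the present one; the paper only claims that an \emph{independent} reworking of the transposition procedure would suffice. Third, the repeated-value step deserves one more line: for a fixed value, order its copies by the number of distinct-valued entries of the column that the data places below each copy; this determines their heights, and genuinely tied copies are interchangeable without changing the filling.
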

\begin{proof}
Take any inverted tableau $\tau \in I^T(\lambda,\mu)$ with standardization $\st(\tau) = T$, and assume $\sum_k \mu_k = N$.  For each value $\alpha$, we simultaneously place a complete order $\prec$ on the $\mu_\alpha$ copies of $\alpha$ in both $T$ and $\tau$.  Let $\prec$ be the unique complete order such that $\alpha_k \prec \alpha_l$ if $\alpha_k$ appears in a more leftward column than $\alpha_l$, and $\alpha_k \prec \alpha_l$ if $\alpha_k,\alpha_l$ appear in the same column and $\alpha_k \blacktriangleleft \alpha_l$.  Label the copies of $\alpha$ in both $T$ and $\tau$ according to this complete order, so that $\alpha_1 \prec \hdots \prec \alpha_{\mu_\alpha}$.  Then re-index all $N$ entries in both $T$ and $\tau$ according to the map $\phi(\alpha_k) = \mu_1 + \hdots + \mu_{(\alpha-1)}+k$.

This re-indexing replaces $T$ with a standard Young tableau $\widetilde{T} \in S(\lambda)$ and replaces $\tau$ with an inverted standard Young tableau $\widetilde{\tau} \in I^T(\lambda)$.  By construction, $\tau \mapsto \widetilde{\tau}$ represents a bijection between $I^T(\lambda,\mu)$ and all inverted tableau in $I^T(\lambda)$ that do not possess an inversion pair of the form $(a,b)$ with $\phi^{-1}(a) = \phi^{-1}(b)$.  In particular, $\inv(\tau) = \inv(\widetilde{\tau})$ and $(a,b)^j$ is an inversion pair of $\tau$ if and only if $(\phi(a),\phi(b))^j$ is an inversion pair of $\widetilde{\tau}$.  Citing Proposition 2.3a of Fresse \cite{Fresse}, we know that $\widetilde{\tau}$ is uniquely identified from $\widetilde{T}$ via its collection of inversion pairs.  If we specify which entries with a repeated value are involved in each inversion pair, the aforementioned bijection allows us to conclude that $\tau$ is uniquely identified from $T$ via a collection of entry-specific inversion pairs.
\end{proof}

Recall that, with standard tableaux, uniquely identifying an inverted tableaux required only a standardization and a list of the values involved in each inversion pairs.  In the semistandard case, as repeated inversion pairs $(a,b)$ are possible it was obviously necessary to be column-specific and to count inversion pairs with multiplicity.  Less obvious is the requirement that we must identify the exact location of entries involved in repeated inversion pairs.  See Figure \ref{fig: entries needed to specify inversions} for an example showing that a specific standardization as well as a collection of inversion pairs, even with column-specific information and accounting for multiplicity, is not enough to specify a unique inverted tableau.

\begin{figure}[ht!]
\centering
\scalebox{.85}{
\begin{ytableau}
2\\
1\\
1\\
2
\end{ytableau}
\hspace{.5in}
\begin{ytableau}
1\\
2\\
2\\
1
\end{ytableau}}
\caption{A pair of distinct tableaux in the same set $I(\lambda,\mu)$ with identical standardizations $T$ and identical inversion pairs $(1,2)^1,(1,2)^1$.  The two tableau are uniquely identified via the entry-specific inversion pairs sets $(1_a,2_a)^1,(1_b,2_a)^1$ and $(1_b,2_a)^1,(1_b,2_b)^1$}
\label{fig: entries needed to specify inversions}
\end{figure}

We now define an operation on inverted tableaux whose repeated application may be used to obtain any inverted tableau from its standardization.  So take $\tau \in I^T(\lambda,\mu)$, and let $a_{i_1j},a_{i_2j}$ be entries from the $j^{th}$ column of $\tau$ such that $a_{i_1j} < a_{i_2j}$ and $\vert \hgt(a_{i_1j}) - \hgt(a_{i_2j}) \vert = 1$.  A \textbf{partial row transposition} at $(a_{i_1j},a_{i_2j})$, denoted $a_{i_1j} \leftrightarrow a_{i_2j}$, is a transposition of the $i_1^{th}$ and $i_2^{th}$ rows of $\tau$ from the $j^{th}$ column leftward.  Now let $b_1$ denote the entry directly to the right of $a_1$ and let $b_2$ denote the entry directly to the right of $a_2$, if those entries in fact exist.  If $a_{i_1j}$ and $a_{i_2j}$ are both smaller than $b_1$ and $b_2$ (if they exist), the partial row transposition $a_{i_1j} \leftrightarrow a_{i_2j}$ preserves row-standardness and we refer to the operation as an \textbf{admissible (partial row) transposition}.

If $\tau \in I^T(\lambda,\mu)$, the fact that an admissible transposition $a_{i_1j} \leftrightarrow a_{i_2j}$ doesn't permute entries between columns means that the resulting tableau $\tau'$ is also an element of $I^T(\lambda,\mu)$.  $\tau'$ is related to $\tau$ in that it flips $\hgt(a_{i_1j})$ and $\hgt(a_{i_2j})$ while fixing the height of every other element.  As $a_{i_1j}$ and $a_{i_2j}$ are consecutive elements in the height order on the $j^{th}$ column of $\tau$, it follows that the admissible transposition $a_{i_1j} \leftrightarrow a_{i_2j}$ either adds or removes the inversion pair $(a_{i_1j},a_{i_2j})$ while leaving other inversion pairs unchanged.  These observations allow us to assert the following:

\begin{proposition}
\label{thm: partial row transpositions generate}
Every inverted tableau $\tau \in I^T(\lambda,\mu)$ may be obtained from its standardization $\st(\tau) = T$ via a finite sequence of admissible transpositions.  Furthermore, the minimum number of admissible transpositions needed to obtain $\tau$ from $T$ is $\inv(\tau)$.
\end{proposition}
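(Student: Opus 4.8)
The plan is to prove both claims together by an inductive argument on $\inv(\tau)$ that builds a sequence of admissible transpositions one inversion pair at a time. The base case is immediate: if $\inv(\tau) = 0$ then $\tau$ is column-semistandard, hence $\tau = \st(\tau) = T$, and the empty sequence of admissible transpositions suffices. For the inductive step, suppose $\inv(\tau) = k \geq 1$. The key observation I would exploit is that, since $\tau \neq T$, there must be some column $j$ and some inversion pair $(i,j)$ recorded in that column; among all entries participating in inversion pairs, I would select one that is \emph{consecutive in the height order} to its inversion partner, so that a partial row transposition between them is available. Concretely, I would argue that if a column has any inversion pair at all, then it has an inversion pair $(a_{i_1 j}, a_{i_2 j})$ with $|\hgt(a_{i_1 j}) - \hgt(a_{i_2 j})| = 1$ — this follows because inversions, by Proposition~\ref{thm: inversions from height order}, are exactly pairs where the natural order and the height order disagree, and any such disagreement forces an adjacent (in height order) disagreement, much as a non-identity permutation always has an adjacent descent.

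Next I would check that this transposition can be taken to be \emph{admissible}, i.e. that it preserves row-standardness. Here is where I would use the structure of the height order more carefully: if $a_{i_1 j}$ and $a_{i_2 j}$ are consecutive in the height order with $a_{i_1 j} < a_{i_2 j}$ but $a_{i_2 j} \blacktriangleleft a_{i_1 j}$, I claim their right-neighbors $b_1, b_2$ (if they exist) are both strictly larger than both $a_{i_1 j}$ and $a_{i_2 j}$. The inequalities $a_{i_1 j} < b_1$ and $a_{i_2 j} < b_2$ are just row-standardness of $\tau$. For the cross inequalities $a_{i_1 j} < b_2$ and $a_{i_2 j} < b_1$, I would invoke the definition of the height order: if $b_1 < b_2$ then $a_{i_1 j} \blacktriangleleft a_{i_2 j}$, contradicting the inversion; so $b_2 \leq b_1$ (with the tie broken by height order), and then $a_{i_2 j} < b_2 \leq b_1$, while $a_{i_1 j} < a_{i_2 j} < b_2$. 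Hence the transposition is admissible, and by the discussion immediately preceding the proposition it produces $\tau' \in I^T(\lambda,\mu)$ with $\inv(\tau') = k-1$ (the pair $(a_{i_1 j}, a_{i_2 j})$ is removed, all others unchanged). By induction $\tau'$ is reachable from $T$ in $k-1$ admissible transpositions, so $\tau$ is reachable in $k$.

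For the ``furthermore'' clause — that $\inv(\tau)$ is the \emph{minimum} length of such a sequence — I would note that each admissible transposition changes $\inv$ by exactly $\pm 1$ (again from the preceding discussion: it toggles a single inversion pair and leaves the rest alone), so any sequence from $T$ (with $\inv = 0$) to $\tau$ (with $\inv = k$) has length at least $k$ by a parity/telescoping argument, and the construction above achieves $k$. This matches, in the semistandard setting, the analogous fact Fresse established for standard tableaux.

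The main obstacle I anticipate is the lemma in the second paragraph: carefully justifying that a column carrying an inversion carries a \emph{height-adjacent} inversion, and that such a pair automatically satisfies the admissibility (row-standardness) condition. The first part is essentially the statement that in the permutation comparing the value order to the height order on a column, an inversion implies an adjacent inversion; the second part requires unwinding the recursive definition of $\blacktriangleleft$ to control the right-neighbors $b_1, b_2$. Neither is deep, but both need the height-order machinery to be handled with some care, particularly the tie-breaking clause when $b_1 = b_2$ as values.
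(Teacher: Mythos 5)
Your proof is correct, but it runs in the opposite direction from the paper's. You work top-down from $\tau$: you show any column containing an inversion contains one between \emph{height-adjacent} entries (the adjacent-descent argument applied to the values read in height order), show that such a transposition is automatically admissible by unwinding the definition of $\blacktriangleleft$ to control the right-neighbors, and then induct on $\inv(\tau)$, with minimality coming from the fact that each admissible transposition toggles a single inversion pair. The paper instead works bottom-up from $T$: it invokes Proposition \ref{thm: inversions uniquely identify} to record $\tau$ as an entry-specific inversion set, and then constructs $\tau$ by adding inversions column by column from right to left and, within a column, entry by entry from top to bottom, performing the transpositions $c_1 \leftrightarrow a_{ij},\hdots,c_m \leftrightarrow a_{ij}$; admissibility is essentially built into the order of the procedure, and minimality is argued exactly as you do. Your route buys independence from Proposition \ref{thm: inversions uniquely identify} (indeed the paper remarks that that proposition could itself be rederived from the transposition procedure, which is close in spirit to your argument), at the cost of the extra lemma you identify; the paper's route buys an explicit canonical minimal word for each $\tau$. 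One detail to patch in your admissibility claim: your cross-inequality argument assumes both right-neighbors $b_1,b_2$ exist. When exactly one exists you should note that the case ``$b_1$ exists but $b_2$ does not'' is impossible for an inverted pair, since in a partition shape the longer row lies above, so the first rule defining $\blacktriangleleft$ would force $a_{i_1 j} \blacktriangleleft a_{i_2 j}$; the remaining case is immediate from row-standardness, and the $b_1=b_2$ tie-break causes no harm since only the value inequality is needed.
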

\begin{proof}
Taking $\tau \in I^T(\lambda,\mu)$, we utilize Proposition \ref{thm: inversions uniquely identify} to uniquely identify $\tau$ via its collection of entry-specific inversion pairs $S = \lbrace (a_\alpha,b_\alpha)^{j_\alpha} \rbrace$.  As each admissible transposition adds precisely one inversion pair, obtaining $\tau$ from $T$ with fewer than $\inv(\tau)$ transpositions is clearly impossible.  To obtain $\tau$ with precisely $\inv(\tau)$ transpositions, we add inversion pairs one column at a time, from right to left.  Within each column, we work through entries from top-to-bottom via their placement in $T$, assuming that the relative ordering of repeated instances with a fixed value in the $j^{th}$ column is unchanged as we pass from $T$ through various elements of $I^T(\lambda,\mu)$.

So assume that we have followed this procedure up to the $j^{th}$ column of $T$, whose entries we denote $a_{1j} \leq a_{2j} \leq \hdots a_{hj}$ from top-to-bottom, and that we are ready to add inversions whose larger element is the entry $a_{ij}$.  Let $\widetilde{\tau} \in I^T(\lambda,\mu)$ denote the intermediate tableau that immediately precedes the addition of these inversion pairs.  Notice that, as $a_{ij}$ has yet to be the site of a partial row transposition, $a_{ij}$ still possesses its original height of $\hgt(a_{ij}) = i$ in $\widetilde{\tau}$.  Furthermore, the entries of lower height in the $j^{th}$ column of $\widetilde{\tau}$ are $a_{1j},\hdots,a_{(i-1)j}$, in some (possibly permuted) order.  Reverse index those entries according to their height order in $\widetilde{\tau}$ as $c_{i-1} \blacktriangleleft \hdots \blacktriangleleft c_1$.  If $a_{ij}$ is the larger entry in precisely $m$ inversion pairs in $\tau$, notice that those inversion pairs must be $(c_1,a_{ij})^j,\hdots,(c_m,a_{ij})^j$.  Otherwise, $\tau$ would have possessed an additional inversion pair of the form $(c_\beta,c_\gamma)$ that we had failed to account for at an earlier step.  Then perform the $m$ admissible transpositions $c_1 \leftrightarrow a_{ij}, \hdots, c_m \leftrightarrow a_{ij}$ in the stated order.  Continuing this procedure through all entries $a_{ij}$ of $T$ yields the desired inverted tableau $\tau$.
\end{proof}

See Figure \ref{fig: partial row transpositions} for an example of the procedure from the proof of Proposition \ref{thm: partial row transpositions generate}.  Notice that this procedure provides one minimal sequence of admissible transpositions that yields $\tau$ from $\st(\tau) = T$.  In general, there are many such sequences, even if one performs only the minimum possible number of $\inv(\tau)$ transpositions.  The author suspects that there exists a rich theory of composition patterns for admissible row transpositions of row-standard tableau.  As ordinary permutations $\sigma \in S_n$ correspond to inverted tableau of shape $\lambda = 1^n$, this topic would directly generalize the existing theory of reduced word decompositions for permutations.

\begin{figure}[ht!]
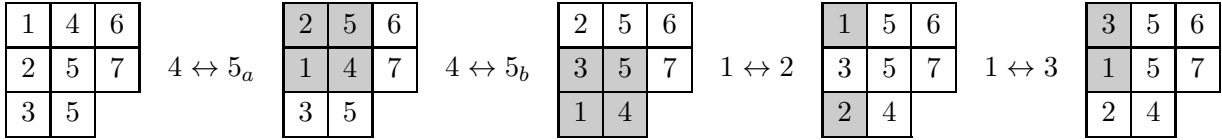

\centering
\begin{ytableau}
1 & 4 & 6 \\
2 & 5 & 7 \\
3 & 5
\end{ytableau}
\hspace{.05in}
\raisebox{-11pt}{$4 \leftrightarrow 5_a$}
\hspace{.05in}
\begin{ytableau}
*(gray!40) 2 & *(gray!40) 5 & 6 \\
*(gray!40) 1 & *(gray!40) 4 & 7 \\
3 & 5
\end{ytableau}
\hspace{.05in}
\raisebox{-11pt}{$4 \leftrightarrow 5_b$}
\hspace{.05in}
\begin{ytableau}
2 & 5 & 6 \\
*(gray!40) 3 & *(gray!40) 5 & 7 \\
*(gray!40) 1 & *(gray!40) 4
\end{ytableau}
\hspace{.05in}
\raisebox{-11pt}{$1 \leftrightarrow 2$}
\hspace{.05in}
\begin{ytableau}
*(gray!40) 1 & 5 & 6 \\
3 & 5 & 7 \\
*(gray!40) 2 & 4
\end{ytableau}
\hspace{.05in}
\raisebox{-11pt}{$1 \leftrightarrow 3$}
\hspace{.05in}
\begin{ytableau}
*(gray!40) 3 & 5 & 6 \\
*(gray!40) 1 & 5 & 7 \\
2 & 4
\end{ytableau}
\caption{Obtaining $\tau$ with inversion pairs $(1,2)^1,(1,3)^1,(4,5_a)^2,(4,5_b)^2$ from $\st(\tau)=T$ via a sequence of admissible transpositions.  $5_a$ and $5_b$ denote the two entries of value $5$ in second column, with $5_a \blacktriangleleft 5_b$.}
\label{fig: partial row transpositions}
\end{figure}

We are now set to develop the language needed for our generating function $\chi^T(q) = \sum_k \vert S_k^T(\lambda,\mu)\vert$.  So take a semistandard Young tableau $T \in S(\lambda,\mu)$, and let $a_{ij}$ be an entry of value $k$ from the $j^{th}$ column of $T$.  We define the \textbf{(standard) inversion depth} of $a_{ij}$, denoted $\dep(a_{ij})$, to equal the number of entries $b$ in the $j^{th}$ column of $T$ such that $b<k$ minus the number of entries $c$ in the $(j+1)^{st}$ column of $T$ such that $c<k$ (and this second number is taken to be zero if $T$ has no $(j+1)^{st}$ column).  We similarly define the \textbf{modified inversion depth} of $a_{ij}$, denoted $\depm(a_{ij})$ to equal the total number of entries above $a_{ij}$ in the $j^{th}$ column of $T$ minus the number of entries $c$ in the $(j+1)^{st}$ column of $T$ such that $c<k$ (once again taken to be zero if $T$ has no $(j+1)^{st}$ column).

For an example of a semistandard Young tableau alongside the inversion depths of each of its entries, standard and modified, see Figure \ref{fig: inversion depth}.  Notice that, if $a_{ij}$ is the only entry of value $k$ in the $j^{th}$ column of of $T$, then $\dep(a_{ij}) = \depm(a_{ij})$.  In particular, if $T$ is a standard Young tableaux, then $\dep(a_{ij}) = \depm(a_{ij})$ for all entries $a_{ij}$.  In the case of $T$ a standard Young tableaux, also notice that our $\dep(a_{ij})$ correspond with the $p_k$ defined by Fresse \cite{Fresse} as the number of appropriately long rows in the sub-tableau $T[1,\hdots,k]$.

\begin{figure}[ht!]
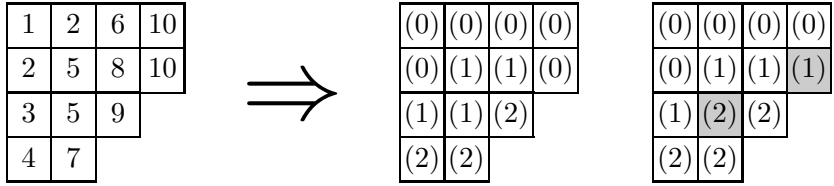

\centering
\begin{ytableau}
1 & 2 & 6 & 10\\
2 & 5 & 8 & 10\\
3 & 5 & 9\\
4 & 7
\end{ytableau}
\hspace{.2in}
\raisebox{-27pt}{\scalebox{3.5}{$\Rightarrow$}}
\hspace{.2in}
\begin{ytableau}
(0) & (0) & (0) & (0)\\
(0) & (1) & (1) & (0)\\
(1) & (1) & (2)\\
(2) & (2)
\end{ytableau}
\hspace{.3in}
\begin{ytableau}
(0) & (0) & (0) & (0)\\
(0) & (1) & (1) & *(gray!40)(1)\\
(1) & *(gray!40)(2) & (2)\\
(2) & (2)
\end{ytableau}
\caption{A semistandard Young tableaux with the inversion depth (center) and modified inversion depth (right) of each of its entries.  Entries where $\dep(a_{ij}) \neq \depm(a_{ij})$ have been highlighted.}
\label{fig: inversion depth}
\end{figure}

Standard inversion depth $\dep(a_{ij})$ has been defined to equal the number of distinct admissible partial row-transpositions possible in the $j^{th}$ column of $T$ where $a_{ij}$ is the larger entry being transposed.  This is equivalent to saying that $\tau \in I^T(\lambda,\mu)$ may possess at most $\dep(a_{ij})$ distinct inversion pairs of the form $(a_{i'j},a_{ij})^j$.

Despite placing a quick upper bound on $\inv(\tau)$ for $\tau \in I^T(\lambda,\mu)$, knowing the standard inversion depth of each element in $T$ will not be sufficient if one wishes to determine the generating function $\chi^T(q)$.  This is because standard inversion depth doesn't easily account for inversion pairs when the $j^{th}$ column of $T$ contains multiple entries of a fixed value $k$.  Luckily, modified inversion depth is perfectly suited to this task and immediately proves useful in the development of generating functions $\chi^T(q)$ for one-column semistandard Young tableaux.  As presented below, Lemma \ref{thm: generating function lemma} actually represents little more than a rewording of Theorem 13 from \cite{Drube}, which gave a generating function for the $\vert S_k(\lambda,\mu) \vert$ when $\lambda = 1^n$.

In what follows we use the standard notation for the $q$-number $[p]_q = 1 + q + \hdots + q^{p-1}$, the $q$-factorial $[p]_q! = [1]_q [2]_q \hdots [p]_q$, and the $q$-binomial coefficients $\binom{a}{b}_q = \frac{[a]_q!}{[b]_q![a-b]_q!}$.

\begin{proposition}
\label{thm: generating function lemma}
Let $T \in S(\lambda,\mu)$ be a one-column semistandard Young tableau of shape $\lambda = 1^n$.  Then:
$$\chi^T(q) = \displaystyle{\sum_k \vert S_k^T(\lambda,\mu) \vert q^k \ = \ \frac{\prod_i [\depm(a_{i1})+1]_q}{\prod_i [\mu_i]_q!}}$$
\end{proposition}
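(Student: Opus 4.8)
The statement concerns one-column tableaux $\lambda = 1^n$, so by Proposition \ref{thm: partial row transpositions generate} every $\tau \in I^T(\lambda,\mu)$ is obtained from $T$ by admissible transpositions within the single column, and (since there is no column to the right to constrain row-standardness) every partial row transposition of height-adjacent entries is automatically admissible. The plan is to recognize that an element of $I^T(\lambda,\mu)$ is nothing more than a rearrangement of the column entries of $T$, i.e. a word $w$ that is a permutation of the multiset $\{1^{\mu_1} 2^{\mu_2} \cdots\}$, and that the number of inversion pairs $\inv(\tau)$ equals the number of inversions of $w$ read against the height order — which, because we are in a single column with nothing to the right, is just the ordinary inversion statistic on the word $w$ (a pair $(i,j)$ with $i<j$ and $j$ appearing above $i$). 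Thus $\chi^T(q)$ is the generating function $\sum_w q^{\mathrm{inv}(w)}$ over all rearrangements $w$ of a fixed multiset, which is the classical $q$-multinomial coefficient $\genfrac{[}{]}{0pt}{}{n}{\mu_1,\ldots,\mu_M}_q = \dfrac{[n]_q!}{\prod_i [\mu_i]_q!}$.

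The remaining task is to match $[n]_q!$ against $\prod_i [\depm(a_{i1})+1]_q$. Here I would compute $\depm(a_{i1})$ directly from the definition: since $\lambda = 1^n$ has no $(j+1)^{\text{st}}$ column, $\depm(a_{i1})$ is simply the number of entries above $a_{i1}$ in the column, namely $i-1$, so $\depm(a_{i1}) + 1 = i$. Hence $\prod_{i=1}^n [\depm(a_{i1})+1]_q = \prod_{i=1}^n [i]_q = [n]_q!$, and the asserted formula follows immediately. This should be stated as the one genuine computation in the argument.

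Alternatively — and perhaps more in keeping with the paper's framing as "a rewording of Theorem 13 from \cite{Drube}" — I would simply cite that prior result, which already gives $\sum_k \vert S_k(\lambda,\mu)\vert q^k$ for $\lambda = 1^n$, and observe two things: first, that for one-column shapes the standardization map $\st$ is a bijection onto the unique semistandard $T$, so $S_k(\lambda,\mu) = S_k^T(\lambda,\mu)$ and no summation over standardizations is needed; and second, that the $q$-multinomial in Theorem 13 of \cite{Drube} is exactly $[n]_q!/\prod_i[\mu_i]_q!$, which we have just rewritten as the claimed ratio via $\depm(a_{i1}) = i-1$.

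The main obstacle, such as it is, is conceptual rather than technical: one must carefully check that $\inv(\tau)$ for a one-column $\tau$ genuinely reduces to the ordinary word-inversion count. This follows because in the absence of any rightward column the height order $\blacktriangleleft$ on the single column coincides with the natural top-to-bottom order inherited from the standardization $T$ (the first bullet in the definition of $\blacktriangleleft$ applies to every pair, as no entry has anything to its right), so by Proposition \ref{thm: inversions from height order} an inversion pair is exactly a pair $i<j$ with $j$ placed above $i$ in $\tau$. Once that identification is made — and it is essentially the content already established in \cite{Drube} for the $\lambda = 1^n$ case — the rest is the standard fact about $q$-counting multiset permutations by inversions, together with the elementary evaluation $\depm(a_{i1}) = i-1$.
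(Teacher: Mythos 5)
Your proposal is correct, and your second (``alternative'') route is essentially the paper's own proof verbatim: the paper observes that $S(\lambda,\mu)$ for $\lambda=1^n$ consists of the single tableau $T$ (so no summation over standardizations is needed), cites Theorem 13 of \cite{Drube} for $\sum_k \vert S_k(\lambda,\mu)\vert q^k = [n]_q!/\prod_i[\mu_i]_q!$, and finishes with exactly your computation $\depm(a_{i1})=i-1$, hence $\prod_i[\depm(a_{i1})+1]_q=[n]_q!$. Your first route goes a step further and is a legitimate self-contained argument: for $\lambda=1^n$ row-standardness is vacuous, the height order on the lone column reduces to the physical top-to-bottom order, so by Proposition \ref{thm: inversions from height order} the statistic $\inv(\tau)$ is the ordinary inversion number of the multiset word, and MacMahon's classical result gives the $q$-multinomial $[n]_q!/\prod_i[\mu_i]_q!$. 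What this buys is independence from the cited Theorem 13 of \cite{Drube} (which it effectively re-proves); what the paper's shorter route buys is brevity, delegating the multiset-inversion count to the earlier reference. Both are sound, and the one genuine computation --- $\depm(a_{i1})=i-1$ in the absence of a rightward column --- is identified correctly in your write-up.
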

\begin{proof}
Notice that when $\lambda = 1^n$ that $S(\lambda,\mu)$ consists of a single column-semistandard tableau, which we label $T$.  Theorem 13 of \cite{Drube} showed that $\sum_k \vert S_k^T(\lambda,\mu) \vert q^k = \sum_k \vert S_k(\lambda,\mu) \vert q^k = \frac{[n]_q!}{\prod_i [\mu_i]_q!}$.  For a one-column tableau, the lack of rightward entries ensures $\depm(a_{i1}) = i-1$ for all $a_{i1}$.  Thus $[n]_q! = \prod_i [\depm(a_{i1}) + 1]_q$ and the result directly follows.
\end{proof}

Before generalizing Lemma \ref{thm: generating function lemma} to the multi-column case we pause to observe that, if $a_{ij}$ is the $m^{th}$ occurrence of value $k$ in the $j^{th}$ column of $T$, then $\dep(a_{ij}) + m - 1 = \depm(a_{ij})$.  It follows that $\depm(a_{ij}) - \dep(a_{ij})$ measures the number of instances of $k$ that lie above $a_{ij}$ in the $j^{th}$ column of $T$.

\begin{theorem}
\label{thm: fixed standardization generating function}
Let $T \in S(\lambda,\mu)$ be a semistandard Young tableaux.  Then:
$$\chi^T(q) \ = \ \displaystyle{\sum_k \vert S_k^T(\lambda,\mu) \vert q^k \ = \ \frac{\prod_{i,j} [\depm(a_{ij})+1]_q}{\prod_{i,j} [\depm(a_{ij}) - \dep(a_{ij}) + 1]_q}}$$
\end{theorem}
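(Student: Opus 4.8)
The plan is to reduce the multi-column statement to a product of single-column contributions and then invoke Proposition~\ref{thm: generating function lemma} (suitably reinterpreted) column by column. The key structural fact, already isolated in Proposition~\ref{thm: partial row transpositions generate} and the discussion preceding it, is that admissible transpositions act \emph{within a single column} and that the inversion pairs occurring in column $j$ depend only on (a) the multiset of values in column $j$ and (b) the values in column $j+1$ — which, through the height order, merely refine how the column-$j$ entries are allowed to be permuted. So I would first argue that the assignment of inversion pairs in distinct columns is \emph{independent}: building $\tau$ from $T$ by the right-to-left procedure of Proposition~\ref{thm: partial row transpositions generate}, the choices available in column $j$ (which entries $a_{i'j}$ to invert past $a_{ij}$, and in which order) are unaffected by the choices already made in columns $>j$, precisely because the \emph{relative} height order of the column-$j$ entries — hence which admissible transpositions are available — is fixed once columns to the right are fixed, and fixing those columns is exactly what $T$ records. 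This yields a factorization
$$\chi^T(q) \ = \ \prod_{j} \chi^{T_j}(q),$$
where $T_j$ is the $j$-th column of $T$ regarded as a one-column semistandard tableau \emph{but with the height order inherited from $T$}, and $\chi^{T_j}(q)$ counts the admissible "inversion decorations" of that single column weighted by number of inversions.

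Next I would compute each single-column factor. The subtlety — flagged in the text — is that $\dep(a_{ij}) \neq \depm(a_{ij})$ in general for multi-column $T$, because an entry of value $k$ in column $j$ may sit below other copies of $k$ \emph{and} below entries smaller than everything in column $j+1$ that is $<k$. The right count is: the entry $a_{ij}$ may be the larger element of some number $r$ of inversion pairs $(a_{i'j},a_{ij})^j$ with $0 \le r \le \dep(a_{ij})$, contributing a factor $[\dep(a_{ij})+1]_q$ \emph{if all column-$j$ entries had distinct values}; when $a_{ij}$ is the $m$-th copy of $k$ in column $j$, the presence of the $m-1$ equal-valued entries above it both enlarges the pool of entries it can be transposed past (raising the ceiling from $\dep$ to $\depm = \dep + m - 1$) and introduces overcounting among the $m$ equal copies, which must be divided out. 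Using the identity $\dep(a_{ij}) + m - 1 = \depm(a_{ij})$ recorded just before the theorem, the net contribution of all copies of a fixed value $k$ in column $j$ works out to
$$\frac{\prod_{m} [\depm + 1]_q}{[\mu^{(j)}_k]_q!}$$
for that value, exactly paralleling the one-column formula of Proposition~\ref{thm: generating function lemma}; and since $\depm(a_{ij}) - \dep(a_{ij}) + 1$ ranges over $1,2,\dots,\mu^{(j)}_k$ as $a_{ij}$ runs through the copies of $k$ in column $j$ (from top to bottom), $\prod_{i}[\depm(a_{ij})-\dep(a_{ij})+1]_q$ over those copies equals $[\mu^{(j)}_k]_q!$. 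Multiplying over all values $k$ and all columns $j$ gives the claimed formula.

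The main obstacle I anticipate is rigorously justifying the column-independence in the first step: one must show that the right-to-left, top-to-bottom transposition procedure genuinely decouples, i.e.\ that the set of achievable entry-specific inversion-pair collections is a \emph{product} over columns, and that within a column the generating-function count matches the one-column case even though the available admissible transpositions (determined by the height order) look different from the naive one-column situation. The cleanest route is probably to fix column $j$, let $\widetilde T$ be the intermediate tableau just before column $j$ is processed, note (as in the proof of Proposition~\ref{thm: partial row transpositions generate}) that each column-$j$ entry still sits at its original height in $\widetilde T$, and observe that the bijection re-indexing repeated values — exactly the $\phi$ of Proposition~\ref{thm: inversions uniquely identify} — converts the column-$j$ problem into the \emph{standard} one-column problem restricted to decorations avoiding equal-value inversions, whose generating function is the $q$-multinomial $[\,\sum_k \mu^{(j)}_k\,]_q! / \prod_k [\mu^{(j)}_k]_q!$ rescaled by the ceiling shift from $\dep$ to $\depm$. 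Everything after that is the routine $q$-factorial bookkeeping sketched above, so I would not belabor it.
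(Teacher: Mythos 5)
Your skeleton matches the paper's proof: reduce to entry-specific inversion-pair collections via Proposition \ref{thm: inversions uniquely identify}, observe that validity can be settled one column at a time because inversion pairs are governed by height order rather than vertical placement, and then multiply per-value contributions within each column. The genuine gap is in the per-value contribution itself, which is the heart of the theorem. Your justification --- each copy of $k$ has its ``ceiling raised'' from $\dep$ to $\depm$ and the overcount among equal copies is divided out by $[c_k]_q!$ --- is an assertion rather than an argument, and read literally it is false: equal values never form an inversion pair, so every copy of $k$ in column $j$ can be the larger element of at most $\dep(k)$ inversion pairs, never $\depm$. The missing step is the actual count: when the $c_k$ indistinguishable copies of $k$ are inserted into the height order of the smaller entries, each copy may be moved past at most $\dep(k)$ of them (this is what row-standardness permits), and the insertions producing exactly $l$ new inversions are in bijection with partitions of $l$ into at most $c_k$ parts each of size at most $\dep(k)$. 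That identifies the factor for the value $k$ as the $q$-binomial $\binom{\dep(k)+c_k}{c_k}_q$, and only then does the identity $\depm(k_i)=\dep(k)+i-1$ rewrite it as $\prod_i[\depm(k_i)+1]_q\big/[c_k]_q!$. One must also record, as the paper does, that distinct insertions yield distinct height orders and that later insertions never invalidate earlier ones, so the factors over the values within one column genuinely multiply.

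Your proposed ``cleanest route'' does not close this hole. Proposition \ref{thm: generating function lemma} concerns a genuinely one-column tableau, where no cap exists (there $\depm(a_{i1})=i-1$), so re-indexing repeated values by $\phi$ and then ``rescaling by the ceiling shift'' provides no mechanism for imposing the bound $\dep(k)$ coming from column $j+1$. The correct column factor is $\prod_k\binom{\dep(k)+c_k}{c_k}_q$, not a rescaled $q$-multinomial: for instance, in the standard tableau with rows $(1,2)$ and $(3,4)$ the first column contributes $1$, whereas the unconstrained one-column count for its content would be $[2]_q!$. Once the heuristic middle step is replaced by the partition/$q$-binomial count above, your argument coincides with the paper's.
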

\begin{proof}
Proposition \ref{thm: inversions uniquely identify} has already shown that every inverted semistandard Young tableau is uniquely determined by its standardization and a collection of entry-specific inversion pairs.  Fixing $T \in S(\lambda,\mu)$, we merely need to determine which collections of inversion pairs are valid in the sense that they actually describe a tableau $\tau \in I^T(\lambda,\mu)$ that is row-standard.

When reordering the columns of $T$ to produce an arbitrary inverted tableau $\tau \in I^T(\lambda,\mu)$, notice that a rearrangement of entries in a leftward column in no way effects inversion pairs in rightward columns.  As inversion pairs are determined by height order (as opposed to vertical placement), we also know that a particular ordering of a rightward column does not effect whether a specific inversion pair is possible in a more leftward column.  These observations allow us to independently determine valid collections of inversion pairs for $I^T(\lambda,\mu)$ one column at a time.

So consider the $j^{th}$ column of $T$, and assume that the $j^{th}$ column contains precisely $c_k$ entries of value $k$.  Modifying the technique used in Theorem 13 of \cite{Drube}, we consider the number of different ways to ``build up" a valid height order on the $j^{th}$ column by recursively inserting all $c_k$ copies of $k$ into the height order of an ``intermediate" column with content $1^{c_1}2^{c_2} \hdots (k-1)^{c_{k-1}}$.  This yields a sequence $\lbrace \rho_1,\rho_2,\hdots,\rho_m \rbrace$ of partial columns such that $\rho_m$ describes the height order of the entire $j^{th}$ column of a row-standard tableau.

Notice that distinct insertions at any step in this procedure result in distinct height orders on the resulting column.  Also notice that a valid placement of the $c_k$ copies of $k$ is not subsequently made invalid upon the insertion of larger entries, as the placement of later entries may only increase the height order of a specific entry.  Lastly, note that the number of inversion pairs in the $j^{th}$ column whose larger element if $k$ is completely determined by the step in our procedure where we insert the $c_k$ copies of $k$ into $\rho_{k-1}$ to produce $\rho_k$.

So assume that we have a valid intermediate column $\rho_{k-1}$ and that we are prepared to insert the $c_k$ copies of $k$ in a way that preserves row-standardness.  By the definition of standard inversion depth, each instance $a_{ij}$ of $k$ may be involved in up to (a fixed number) of $\dep(a_{ij}) = \dep(k)$ inversion pairs where it is the larger element.  The number of such inversion pairs involving a specific instance $a_{ij}$ is determined by the number of entries in $\rho_{k-1}$ that it is moved ahead of in the height order.  If we want the resulting column $\rho_k$ to possess precisely $l$ inversion pairs whose larger entry is some instance $k$, it follows that such arrangements are in bijection with partitions of $l$ into at most $c_k$ parts where each part has size at most $\dep(k)$.

Recall that the coefficient of $q^l$ in $\binom{\alpha + \beta}{\alpha}_q$ equals the number of partitions of $l$ into at most $\alpha$ parts, each of which has size at most $\beta$.  If $k_1,\hdots,k_{c_k}$ denote the $c_k$ instances of $k$ in the $j^{th}$ column (read from top to bottom), this means that the number of different ways to produce a row-standard intermediate clumn $\rho_k$ with various numbers of inversions whose larger entry is $k$ has generating function:

$$f_{j,k}(q) \ = \ \binom{\dep(k) + c_k}{c_k}_q \ = \ \frac{[\dep(k)+1]_q \hdots [\dep(k)+c_k]_q}{[c_k]_q!} \ = \ \frac{\prod_{i=1}^{c_k}[\depm(k_i)+1]_q}{\prod_{i=1}^{c_k}[\depm(a_{ij}) - \dep(a_{ij}) + 1]_q}$$

The last equality above uses the aforementioned fact that the $m^{th}$ instance $k_m$ of value $k$ in the $j^{th}$ column satisfies $\dep(k_m) + m - 1 = \depm(k_m)$.  Ranging over all values in the $j^{th}$ column of $T$, and then over all columns in $T$, gives the formula from the theorem.
\end{proof}

\begin{example}
\label{ex: generating function}
If $T$ is the tableau of Figure \ref{fig: inversion depth}, Theorem \ref{thm: fixed standardization generating function} gives generating function:
$$\chi^T(q) \ = \ \sum_k \vert S_k^T(\lambda,\mu) \vert q^k \ = \ \frac{(1+q)^4 (1+q+q^2)^4}{(1+q)^2}$$
$$= \ 1 + 6q + 19q^2 + 40q^3 + 61q^4 + 70q^5 + 61q^6 + 40q^7 + 19q^8 + 6q^9 + q^{10}$$
\end{example}

We close this section by drawing a number of quick corollaries from Theorem \ref{thm: fixed standardization generating function}.  First notice that, if $T$ lacks repeated entries, then $\depm(a_{ij}) = \dep(a_{ij})$ for all $a_{ij}$ and the expression of Theorem \ref{thm: fixed standardization generating function} may be reduced to a generating function that is equivalent to the one introduced by Fresse \cite{Fresse}:

\begin{corollary}
\label{thm: generating function standard corollary}
Let $T \in S(\lambda)$ be a standard Young tableaux.  Then:
$$ \chi^T(q) \ = \ \sum_k \vert S_k^T(\lambda) \vert q^k \ = \ \prod_{i,j} [\dep(a_{ij})+1]_q$$
\end{corollary}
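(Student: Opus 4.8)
The plan is to obtain this statement as an immediate specialization of Theorem~\ref{thm: fixed standardization generating function}. The key observation is the one already recorded in the discussion following Figure~\ref{fig: inversion depth}: if $T$ is a standard Young tableau, then each value occurs at most once in any given column, so every entry $a_{ij}$ is the unique occurrence of its value in the $j^{th}$ column. Consequently $\depm(a_{ij}) = \dep(a_{ij})$ for all cells $(i,j)$ of $\lambda$. Equivalently, one can recall the bookkeeping identity noted just before Theorem~\ref{thm: fixed standardization generating function}, namely that if $a_{ij}$ is the $m^{th}$ occurrence of its value in its column then $\dep(a_{ij}) + m - 1 = \depm(a_{ij})$; in the standard case $m = 1$ always, which again forces $\dep(a_{ij}) = \depm(a_{ij})$.

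From here I would simply substitute this equality into the formula of Theorem~\ref{thm: fixed standardization generating function}. The denominator $\prod_{i,j} [\depm(a_{ij}) - \dep(a_{ij}) + 1]_q$ collapses to $\prod_{i,j} [1]_q = 1$, since $[1]_q = 1$, while the numerator $\prod_{i,j} [\depm(a_{ij}) + 1]_q$ becomes $\prod_{i,j} [\dep(a_{ij})+1]_q$, which is exactly the claimed product. As a sanity check one can instead revisit the proof of Theorem~\ref{thm: fixed standardization generating function} directly: with no repeated entries each relevant $c_k$ equals $1$, so the per-value factor degenerates to $f_{j,k}(q) = \binom{\dep(k)+1}{1}_q = [\dep(k)+1]_q$, and ranging over all values and all columns reproduces the same product.

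There is essentially no obstacle here; the only point worth double-checking is that the products in Theorem~\ref{thm: fixed standardization generating function} range over all cells $(i,j)$ of the diagram $\lambda$, so that no cell is dropped or double-counted when passing to the standard specialization. I would also note — as a remark rather than part of the argument — that the resulting expression $\prod_{i,j}[\dep(a_{ij})+1]_q$ agrees with Fresse's generating function for the $\vert S_k^T(\lambda) \vert$, once his quantities $p_k$ (the number of sufficiently long rows of the subtableau $T[1,\dots,k]$) are identified with our $\dep(a_{ij})$ for the cell $a_{ij}$ holding the value $k$, an identification already flagged in the text accompanying Figure~\ref{fig: inversion depth}.
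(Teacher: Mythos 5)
Your proposal is correct and matches the paper's own justification: the corollary is obtained exactly by noting that a standard $T$ has no repeated entries, so $\depm(a_{ij}) = \dep(a_{ij})$ for every cell, the denominator of Theorem \ref{thm: fixed standardization generating function} collapses to a product of $[1]_q$'s, and the numerator becomes $\prod_{i,j}[\dep(a_{ij})+1]_q$. The extra sanity check via the $c_k = 1$ specialization inside the theorem's proof and the remark about Fresse's $p_k$ are consistent with the paper but not needed.
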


Now notice that our generating function $\chi^T(q)$ is always monic, meaning that there is precisely one ``maximally inverted" tableau $\tau \in I^T(\lambda,\mu)$ for each choice of $T \in S(\lambda,\mu)$.  This maximally inverted tableau always has degree $m_T = \prod_{i,j} \depm(a_{ij})/\prod_{i,j} (\depm(a_{ij}) - \dep(a_{ij}))$.

\begin{corollary}
\label{thm: maximal inversion number fixed standardization}
Let $T \in S(\lambda,\mu)$ be any semistandard Young tableau, and define $m_T$ as above.  Then $\vert S_k^T(\lambda,\mu) \vert = 0$ for all $k > m_T$ and $\vert S_{m_T}^T(\lambda,\mu) \vert = 1$.
\end{corollary}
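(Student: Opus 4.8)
The plan is to read the statement straight off the generating function, using in particular the fully factored form of $\chi^T(q)$ produced inside the proof of Theorem \ref{thm: fixed standardization generating function}: that argument shows
$$\chi^T(q) \ = \ \prod_{j} \prod_{k} \binom{\dep(k) + c_k^{(j)}}{c_k^{(j)}}_q ,$$
where the inner product runs over the values $k$ occurring in the $j^{th}$ column of $T$ and $c_k^{(j)}$ is the number of copies of $k$ in that column. So $\chi^T(q)$ is a product of Gaussian binomial coefficients, and the corollary follows once the leading term of each factor is understood. The first step is just to record that $\chi^T(q)\in\mathbb{Z}_{\geq 0}[q]$ is an honest polynomial — either because each $\binom{a}{b}_q$ lies in $\mathbb{Z}_{\geq 0}[q]$ and a product of such does too, or, more conceptually, because $I^T(\lambda,\mu)$ is finite (each column admits only finitely many height orders), so the series $\sum_k \vert S_k^T(\lambda,\mu)\vert q^k$ terminates.

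The one step carrying any content is monicity: each factor $\binom{\dep(k)+c_k^{(j)}}{c_k^{(j)}}_q$ is monic of degree $c_k^{(j)}\dep(k)$. Indeed the coefficient of $q^\ell$ in $\binom{\alpha+\beta}{\alpha}_q$ counts partitions of $\ell$ fitting inside an $\alpha\times\beta$ box; the largest such $\ell$ is $\alpha\beta$, realized by the unique partition equal to the full rectangle, so the top coefficient is $1$. Since a product of monic polynomials is monic, $\chi^T(q)$ is monic. Finally I would total the degrees: because every entry $a_{ij}$ of value $k$ in the $j^{th}$ column satisfies $\dep(a_{ij})=\dep(k)$, summing $c_k^{(j)}\dep(k)$ over all $k$ and all columns $j$ gives $\deg\chi^T(q) = \sum_{i,j}\dep(a_{ij})$, which is exactly $m_T$ — equivalently $\sum_{i,j}\depm(a_{ij}) - \sum_{i,j}\bigl(\depm(a_{ij})-\dep(a_{ij})\bigr)$, as read off from the $q$-numbers appearing in Theorem \ref{thm: fixed standardization generating function}. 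Comparing coefficients in $\chi^T(q) = \sum_k \vert S_k^T(\lambda,\mu)\vert q^k$ then yields $\vert S_k^T(\lambda,\mu)\vert = 0$ for all $k>m_T$ and $\vert S_{m_T}^T(\lambda,\mu)\vert = 1$.

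There is no serious obstacle here: the whole argument is bookkeeping on top of Theorem \ref{thm: fixed standardization generating function}, and the subtlest point is simply the monicity of the Gaussian binomial factors, i.e.\ the uniqueness of the maximal box-partition. If a self-contained combinatorial proof were preferred, one could instead construct the unique maximally inverted $\tau\in I^T(\lambda,\mu)$ by hand, obtaining it from $T$ by reversing the height order in each column as far as admissibility allows (via Proposition \ref{thm: partial row transpositions generate}) and checking that no further admissible transposition can add an inversion; but the factored generating function makes the shortcut above considerably cleaner.
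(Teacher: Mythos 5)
Your proposal is correct and follows essentially the same route as the paper, which presents this corollary as an immediate consequence of Theorem \ref{thm: fixed standardization generating function}: $\chi^T(q)$ is a product of monic $q$-binomial coefficients, hence monic of degree $m_T$, and the coefficients of $q^k$ for $k>m_T$ vanish while the top coefficient is $1$. Your reading of $m_T$ as $\sum_{i,j}\depm(a_{ij}) - \sum_{i,j}\bigl(\depm(a_{ij})-\dep(a_{ij})\bigr) = \sum_{i,j}\dep(a_{ij})$ is the intended one (the paper's displayed quotient of products is evidently a slip for this difference of degree sums), so no gap remains.
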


Compare Corollary \ref{thm: maximal inversion number fixed standardization} to Theorem 7 of \cite{Drube}, in which the author determined a maximal inversion number $M_{\lambda,\mu}$ when ranging over all standardizations $T \in S(\lambda,\mu)$ and showed that precisely one $\tau_{max} \in I(\lambda,\mu)$ actually realized that number of inversions.\footnote{In \cite{Drube}, it is shown that $M_{\lambda,\mu} = \sum_j T_{(h_j -1)} - \sum_i T_{(\mu_i - 1)}$, where $T_k$ is the triangle number and $h_j$ is the height of the $j^{th}$ column in any tableau of shape $\lambda$.  The unique element of $I(\lambda,\mu)$ obtaining $M_{\lambda,\mu}$ inversions was labelled $\tau_{max}$, and its standardization was denoted $\st(\tau_{max}) = T^*$.}  Applying Corollary \ref{thm: maximal inversion number fixed standardization} to the language of Theorem 7 from \cite{Drube} allows us to conclude that the standardization $T^* = \st(\tau_{max})$ is the unique element of $S(\lambda,\mu)$ whose generating function $\chi^T(q)$ obtains the maximal possible degree of $M_{\lambda,\mu}$.

In order to draw our final corollaries, notice that $\chi^T(q)$ is always a product of palindromic unimodal polynomials of the form $\binom{\alpha}{\beta}_q$.  As the product of two palindromic unimodal polynomial is itself palindromic and unimodal (see Proposition 1 of Stanley \cite{Stanley}), we have both of the following:

\begin{corollary}
\label{thm: generating function symmetry}
Let $T \in S(\lambda,\mu)$ be any semistandard Young tableaux.  Then $\chi^T(q)$ is a palindromic polynomial.  In particular, if $m_T = \deg(\chi^T(q))$ then $\displaystyle{\vert S_k^T(\lambda,\mu) \vert = \vert S_{m_T - k}^T(\lambda,\mu) \vert}$ for all $k$.
\end{corollary}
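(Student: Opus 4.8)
The plan is to read the symmetry straight off the factored form of $\chi^T(q)$ produced inside the proof of Theorem \ref{thm: fixed standardization generating function}, rather than off the quotient appearing in its statement (from which $\chi^T(q)$ is not even visibly a polynomial). That proof exhibited
$$\chi^T(q) \ = \ \prod_j \prod_k f_{j,k}(q), \qquad f_{j,k}(q) \ = \ \binom{\dep(k)+c_k}{c_k}_q,$$
where $j$ ranges over the columns of $T$, $k$ ranges over the values occurring in the $j^{th}$ column, and $c_k$ is the number of instances of $k$ there. So the first step is merely to recall this identity; everything else reduces to two standard facts about $q$-binomial coefficients.

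Next I would record that each $\binom{\alpha}{\beta}_q$ is palindromic of degree $\beta(\alpha-\beta)$: the coefficient of $q^\ell$ in $\binom{\alpha}{\beta}_q$ counts partitions of $\ell$ fitting inside a $\beta \times (\alpha-\beta)$ rectangle, and complementation within that rectangle is an involution carrying partitions of $\ell$ to partitions of $\beta(\alpha-\beta)-\ell$; unimodality of $\binom{\alpha}{\beta}_q$ is the classical fact quoted as Proposition 1 of Stanley \cite{Stanley}. Hence each $f_{j,k}(q)$ is a palindromic, unimodal polynomial. I would then record that palindromicity multiplies cleanly: if $p(q) = q^{\deg p}\, p(1/q)$ and $r(q) = q^{\deg r}\, r(1/q)$, then $(pr)(q) = q^{\deg p + \deg r}(pr)(1/q)$, so any product of palindromic polynomials is palindromic with degree equal to the sum of the degrees of the factors. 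Combining these, $\chi^T(q)$ is palindromic of degree $m_T = \deg \chi^T(q)$; and, since the genuinely nontrivial statement that a product of palindromic unimodal polynomials is again unimodal is precisely Proposition 1 of Stanley \cite{Stanley}, $\chi^T(q)$ is unimodal as well.

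Finally I would unwind the definition: writing $\chi^T(q) = \sum_k \vert S_k^T(\lambda,\mu)\vert\, q^k$ with $\deg \chi^T(q) = m_T$, the relation $\chi^T(q) = q^{m_T}\chi^T(1/q)$ says exactly that the $q^k$- and $q^{m_T-k}$-coefficients coincide for every $k$, i.e. $\vert S_k^T(\lambda,\mu)\vert = \vert S_{m_T-k}^T(\lambda,\mu)\vert$. I do not expect a real obstacle here: palindromicity of $\chi^T(q)$ is essentially bookkeeping once the factored form is in hand, and the one substantive external ingredient — preservation of unimodality under products — is supplied by the cited result of Stanley. The only point that genuinely needs care is to insist on arguing from the factored form of $\chi^T(q)$, so that each factor is manifestly a $q$-binomial coefficient of the type to which Stanley's proposition applies.
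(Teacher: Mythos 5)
Your proposal is correct and follows essentially the same route as the paper: both arguments observe that $\chi^T(q)$ factors as a product of $q$-binomial coefficients $\binom{\dep(k)+c_k}{c_k}_q$ (as exhibited in the proof of Theorem \ref{thm: fixed standardization generating function}), each palindromic and unimodal, and that palindromicity is preserved under products, so the coefficient symmetry $\vert S_k^T(\lambda,\mu)\vert = \vert S_{m_T-k}^T(\lambda,\mu)\vert$ follows at once. The only cosmetic difference is that you prove the palindromicity of each $q$-binomial (via rectangle complementation) and the product closure directly, where the paper simply cites Proposition 1 of Stanley \cite{Stanley}.
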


\begin{corollary}
\label{thm: generating function unimodal}
Let $T \in S(\lambda,\mu)$ be any semistandard Young tableaux.  Then $\chi^T(q)$ is unimodal.
\end{corollary}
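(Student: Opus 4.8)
The plan is to deduce unimodality of $\chi^T(q)$ from the explicit factorization already produced in the proof of Theorem \ref{thm: fixed standardization generating function}. Recall that this argument expressed
$$\chi^T(q) \ = \ \prod_j \prod_k f_{j,k}(q), \qquad f_{j,k}(q) \ = \ \binom{\dep(k) + c_k}{c_k}_q,$$
where $j$ runs over the columns of $T$, $k$ runs over the distinct values occurring in the $j^{th}$ column, and $c_k$ is the multiplicity of $k$ in that column. So $\chi^T(q)$ is nothing more than a finite product of Gaussian ($q$-binomial) polynomials, and I would reduce unimodality to two classical facts about such polynomials.

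First I would recall that every Gaussian polynomial $\binom{a}{b}_q$ is a polynomial in $q$ with nonnegative integer coefficients that is simultaneously palindromic --- symmetric about degree $\tfrac{1}{2}b(a-b)$ --- and unimodal. Palindromicity is elementary; unimodality is the classical theorem usually credited to Sylvester, whose known proofs are at bottom representation-theoretic (via an $\mathfrak{sl}_2$-action, equivalently hard Lefschetz); see Stanley \cite{Stanley}. I would cite this rather than reprove it. Next I would invoke Proposition 1 of Stanley \cite{Stanley}: a product of polynomials that are each palindromic and unimodal with nonnegative coefficients is again palindromic and unimodal, the center of symmetry of the product being the sum of the centers. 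Applying this inductively to the finitely many factors $f_{j,k}(q)$ gives at once that $\chi^T(q)$ is palindromic (re-proving Corollary \ref{thm: generating function symmetry}) and unimodal, which is the claim.

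There is no serious obstacle here; the argument is a short assembly of citable facts once the factorization is in hand. The one point that genuinely deserves emphasis --- and the reason palindromicity must be carried along throughout --- is that unimodality by itself is \emph{not} preserved under multiplication: it is precisely the symmetry of each $\binom{a}{b}_q$ that rescues unimodality of the product. The only bookkeeping to check is that each factor appearing in the displayed product really is a single Gaussian polynomial with nonnegative coefficients, which is immediate from the line of the proof of Theorem \ref{thm: fixed standardization generating function} where $f_{j,k}(q) = \binom{\dep(k)+c_k}{c_k}_q$ is identified. A natural follow-up question, which I would flag rather than pursue, is whether unimodality of $\chi^T(q)$ admits a direct combinatorial (injective) proof phrased in terms of admissible transpositions of inverted tableaux.
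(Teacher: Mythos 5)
Your proposal is correct and is essentially the paper's own argument: the paper likewise observes that $\chi^T(q)$ factors as a product of $q$-binomial coefficients, each palindromic and unimodal, and then cites Proposition 1 of Stanley to conclude that the product is palindromic and unimodal. Your added remarks (Sylvester's theorem for the unimodality of Gaussian polynomials, and the caveat that unimodality alone is not multiplicative) are accurate elaborations of the same route rather than a different proof.
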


Pause to observe that the notion of unimodality addressed in Corollary \ref{thm: generating function unimodal} is distinct from the unimodality proven by Fresse, Mansour and Melnikov \cite{FMM}.  Corollary \ref{thm: generating function unimodal} proves the unimodality of $\chi^T(q) = \sum_k \vert S_k^T(\lambda,\mu) \vert$ for fixed standardization $T$ yet for arbitrary $\lambda,\mu$.  Fresse, Mansour and Melnikov restrict themselves to standard tableau and prove the unimodality of $\xi(q) = \sum_T \chi^T(q) = \sum_k \vert S_k(\lambda) \vert$ for specific (relatively) easy choices of $\lambda$, such as two-row and ``hook-shaped" cases.

Although the unimodality of $\chi^T(q)$ is nearly immediate, log-convexity of $\chi^T(q)$ does not necessarily follow because the $q$-binomial coefficients that constitute $\chi^T(q)$ need not be log-concave.  For example, take the sole semistandard tableau $T$ of $S(\lambda,\mu)$ for $\lambda = 1^4$ and $\mu = 1^2 2^2$ (i.e.- the standardization of both tableaux from Figure \ref{fig: entries needed to specify inversions}).  This $T$ has $\chi^T(q) = \binom{4}{2}_q = 1 + q + 2q^2 + q^3 + q^4$, which is palindromic and unimodal but not log-concave.

If however we restrict our attention to standard Young tableau, from Corollary \ref{thm: generating function standard corollary} we see that $\chi^T(q)$ is a product of log-concave polynomials of the form $[\alpha]_q$ and hence is itself log-concave:

\begin{corollary}
\label{thm: standard generating function log-concave}
Let $T \in S(\lambda)$ be any standard Young tableaux.  Then $\chi^T(q)$ is log-concave.
\end{corollary}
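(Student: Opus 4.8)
The plan is to build directly on Corollary \ref{thm: generating function standard corollary}, which already writes $\chi^T(q)$ as a product of $q$-numbers, and then to invoke the fact that log-concavity (with the no-internal-zeros caveat) is preserved under polynomial multiplication. First I would record that, by Corollary \ref{thm: generating function standard corollary}, $\chi^T(q) = \prod_{i,j}[\dep(a_{ij})+1]_q$, a product over the entries $a_{ij}$ of $T$ of polynomials of the form $[\alpha]_q = 1 + q + \cdots + q^{\alpha-1}$. Each such factor has coefficient sequence $(1,1,\dots,1)$, which is manifestly log-concave and has no internal zeros: the defining inequalities $c_{k-1}c_{k+1} \le c_k^2$ read $1 \le 1$ internally and $0 \le 1$ at the two ends.

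Next I would recall the relevant closure property: if $P(q) = \sum_k a_k q^k$ and $Q(q) = \sum_k b_k q^k$ are polynomials whose coefficient sequences are nonnegative, log-concave, and have no internal zeros, then the coefficient sequence of $P(q)Q(q)$, namely the convolution $(a * b)_k = \sum_i a_i b_{k-i}$, is again nonnegative, log-concave, and has no internal zeros. This is the log-concave companion of the palindromic-unimodal product statement already used in the excerpt via Proposition 1 of Stanley \cite{Stanley}; unlike unimodality of arbitrary products, the no-internal-zeros hypothesis is genuinely needed here, but it too is preserved. A short self-contained proof of the lemma proceeds by the standard one-variable manipulation: fix $k$ and compare $(a*b)_{k-1}(a*b)_{k+1}$ with $(a*b)_k^2$ term by term, using $a_{i-1}a_{i+1} \le a_i^2$, $b_{j-1}b_{j+1} \le b_j^2$, and the fact that the supports are integer intervals to control the cross terms.

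Finally I would apply this closure property by induction on the number of factors in $\prod_{i,j}[\dep(a_{ij})+1]_q$: the empty product $1$ is log-concave with no internal zeros, and multiplying a log-concave (no-internal-zeros) polynomial by one more factor $[\dep(a_{ij})+1]_q$ preserves both properties by the lemma. Hence $\chi^T(q)$ is log-concave, which is the claim.

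The main obstacle I anticipate is the closure lemma itself: log-concavity of sequences is not preserved under convolution in general (as the example $\binom{4}{2}_q$ in the preceding discussion already illustrates for $q$-binomials), so the argument must carry the ``no internal zeros'' condition along at every step, and the term-by-term comparison that proves the convolution lemma requires a little care with the boundary indices. Everything else is routine bookkeeping layered on top of Corollary \ref{thm: generating function standard corollary}.
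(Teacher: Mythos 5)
Your proposal is correct and follows essentially the same route as the paper, which likewise deduces log-concavity of $\chi^T(q)$ from Corollary \ref{thm: generating function standard corollary} by viewing it as a product of the log-concave polynomials $[\dep(a_{ij})+1]_q$; you simply make explicit the closure lemma (products of nonnegative, log-concave, no-internal-zero coefficient sequences remain so) that the paper invokes implicitly. Your care with the no-internal-zeros hypothesis is exactly the right point to flag, since the factors $[\alpha]_q$ satisfy it while general $q$-binomials such as $\binom{4}{2}_q$ do not remain log-concave.
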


\section{Inverted Young Tableaux \& Generalized Ballot Numbers}
\label{sec: generalized ballot numbers}

In Section \ref{sec: generating functions} we fixed a semistandard Young tableau $T \in S(\lambda,\mu)$ and developed a generating function for the numbers of $k$-inverted tableau $\vert S_k^T(\lambda,\mu) \vert$ whose standardization was $T$.  It is natural to ask whether our results for the $\vert S_k^T(\lambda,\mu) \vert$ aid in the enumeration of the entire sets $S_k(\lambda,\mu)$ or in the development of a non-$T$-specific generating function $\xi(q) = \sum_T \chi^T(q) = \sum_k \vert S_k(\lambda,\mu) \vert$?  As noted by Beagley and the author \cite{BD,Drube}, this widening of scope is extremely difficult because it requires specific knowledge of every $T \in S(\lambda,\mu)$.  In particular, a direct generalization would require knowledge of how many $T \in S(\lambda,\mu)$ possess a fixed generating function $\chi^T(q)$.

It is because of these difficulties that previous attempts at enumerating the $S_k(\lambda,\mu)$ have disregarded the $S_k^T(\lambda,\mu)$ and utilized more direct techniques.  Even in the standard case, such techniques have only been previously applied to calculate $S_k(\lambda)$ for specific easy choices of $k$ and $\lambda$.  See Fresse, Mansour and Melnikov \cite{FMM} for calculations of $\vert S_k (\lambda) \vert$ when $\lambda$ is a two-row or ``hook" shape, as well as Beagley and the author \cite{BD} for an independent verification of the two-row case along with the $k=1$ case for an arbitrary shape $\lambda$.

The situation for semistandard tableaux is even more daunting, as there does not even exist a generalized ``hook-length formula" to enumerate semistandard Young tableaux $\vert S(\lambda,\mu) \vert$ with fixed shape and content.  The author \cite{Drube} was still able to determine $\vert S_k(\lambda,\mu) \vert$ for arbitrary $\mu$ when $\lambda$ was a two-row or one-column shape, conveniently corresponding to the cases where calculating $\vert S(\lambda,\mu) \vert$ is relatively straightforward.  For an arbitrary shape $\lambda$, the author \cite{Drube} also placed $S_1(\lambda,\mu)$ in bijection with $\bigcup_{\lambda_i} S_0(\lambda_i,\mu)$ for a certain finite collection of related shapes $\lambda_i$.

For the remainder of this paper, we present two new approaches for enumerating the $S_k(\lambda,\mu)$.  In this section, we restrict ourselves to the standard case and utilize the bijection between $m$-row standard Young tableaux and $m$-dimensional Dyck paths to explicitly determine how many $T \in S(\lambda)$ have a fixed generating function $\chi^T(q)$.  This is done first in the familiar two-dimensional case, which motivates the general $m$-row case.

In all that follows, let $S(\lambda,\mu) |_\chi \subset S(\lambda,\mu)$ denote the set of semistandard Young tableau that have a fixed generating function $\chi^T(q)$, a notation that we predictably adapt to the standard case as $S(\lambda) |_\chi$.  If $\lambda$ and $\mu$ are understood, we significantly shorten notation for the cardinalities of these sets to $\phi(\chi) = \vert S(\lambda,\mu) |_\chi \vert$.  Clearly $\xi(q) = \sum_\chi \phi(\chi) \chi^T(q)$.

\subsection{2-Dimensional Dyck Paths and Two-Row Inverted Young Tableaux}
\label{subsec: 2-dimensional dyck paths}

We begin by recapping basic results about traditional (two-dimensional) Dyck paths and how they relate to standard Young tableaux.  Let $\lambda = (a,b)$, where $a,b$ are positive integers.  A \textbf{Dyck path of shape} $\mathbf{\lambda}$ is an integer lattice path from $(0,0)$ to $(a,b)$ utilizing only ``East" $E = (1,0)$ and ``North" $N=(0,1)$ steps such that every point $(x,y)$ along the path satisfies $y \leq x$.  We denote the set of all Dyck paths of shape $\lambda$ by $\mathcal{D}_\lambda = \mathcal{D}_{(a,b)}$.  A particular Dyck path $P \in \mathcal{D}_\lambda$ will often be specified by $P = \lbrace v_0,\hdots,v_{a+b} \rbrace$, where the $v_i$ are the integer lattice points along the path.  Thus $v_0 = (0,0)$, $v_{a+b} = (a,b)$, and $v_i - v_{i-1} = (1,0)$ or $(0,1)$ for all $1 \leq i \leq a+b$.

Obviously $\mathcal{D}_{(a,b)} = \emptyset$ if $a>b$.  If $a=b=n$, it is well-known that the cardinality of $\mathcal{D}_{(n,n)} = \mathcal{D}_n$ equals the $n^{th}$ Catalan number $C_n = \frac{1}{n+1} \binom{2n}{n}$.  More generally, there is a well-studied bijection between $\mathcal{D}_\lambda$ and standard Young tableaux of shape $\lambda$.  See Figure \ref{fig: tableau to lattice path} for an example of this straightforward bijection, in which $E$ steps of the Dyck path correspond to values in the first row of the associated tableau and $N$ steps of the Dyck path correspond to values in the second row of the tableau.

\begin{figure}[ht!]
\centering
\raisebox{38pt}{\begin{ytableau}
1 & 2 & 5 & 7 \\
3 & 4 & 6
\end{ytableau}}
\hspace{.2in}
\raisebox{32pt}{\scalebox{2.5}{$\Leftrightarrow$}}
\hspace{.2in}
\begin{tikzpicture}
	[scale=.7,auto=left,every node/.style={circle, fill=black,inner sep=1.25pt}]
	\draw[dotted] (0,0) to (4,0);
	\draw[dotted] (0,1) to (4,1);
	\draw[dotted] (0,2) to (4,2);
	\draw[dotted] (0,3) to (4,3);
	\draw[dotted] (0,0) to (0,3);
	\draw[dotted] (1,0) to (1,3);
	\draw[dotted] (2,0) to (2,3);
	\draw[dotted] (3,0) to (3,3);
	\draw[dotted] (4,0) to (4,3);
	\draw (0,0) to (3,3);
	\node (0*) at (0,0) {};
	\node (1*) at (1,0) {};
	\node (2*) at (2,0) {};
	\node (3*) at (2,1) {};
	\node (4*) at (2,2) {};
	\node (5*) at (3,2) {};
	\node (6*) at (3,3) {};
	\node (7*) at (4,3) {};
	\node[fill=none] (1l) at (.5,-.25) {\scalebox{.75}{1}};
	\node[fill=none] (2l) at (1.5,-.25) {\scalebox{.75}{2}};
	\node[fill=none] (3l) at (2.2,.5) {\scalebox{.75}{3}};
	\node[fill=none] (4l) at (2.2,1.5) {\scalebox{.75}{4}};
	\node[fill=none] (5l) at (2.6,1.75) {\scalebox{.75}{5}};
	\node[fill=none] (6l) at (3.2,2.5) {\scalebox{.75}{6}};
	\node[fill=none] (7l) at (3.5,3.25) {\scalebox{.75}{7}};
	\draw[thick] (0*) to (1*);
	\draw[thick] (1*) to (2*);
	\draw[thick] (2*) to (3*);
	\draw[thick] (3*) to (4*);
	\draw[thick] (4*) to (5*);
	\draw[thick] (5*) to (6*);
	\draw[thick] (6*) to (7*);	
	\end{tikzpicture}
\caption{A standard Young tableau of shape $\lambda = (4,3)$ and the corresponding Dyck path in $\mathcal{D}_\lambda$.}
\label{fig: tableau to lattice path}
\end{figure}
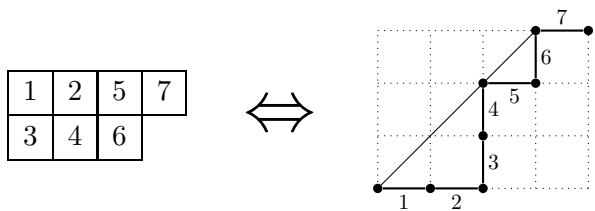

There are many important statistics on Dyck paths.  The statistic that will prove useful here is a Dyck path's number of ``returns to ground".  Formally, a path $P = \lbrace v_0,\hdots, v_{a+b} \rbrace$ in $\mathcal{D}_{(a,b)}$ has a \textbf{return to ground} at $v_i$ if $v_i = (x,x)$ for some positive integer $x$.  Observe that $v_0 = (0,0)$ does not qualify as a return to ground.  If $P \in \mathcal{D}_{\lambda}$ has precisely $k$ returns to returns to ground, we write $\ret(P)=k$.  We also establish the notation $\mathcal{D}_\lambda (k) = \lbrace P \in \mathcal{D}_\lambda \ \vert \ \ret(P) = k \rbrace$.

When $\lambda = (n,n)$, the sets $\mathcal{D}_n(k)$ are related to the famed ballot numbers $B(\alpha,\beta) = \frac{\alpha - \beta + 1}{\alpha + 1} \binom{\alpha + \beta}{\alpha}$ as $\vert \mathcal{D}_n(k) \vert = B(n-1,n-k) = \frac{k}{n} \binom{2n-k-1}{n-1}$.  These ballot numbers are frequently presented as entries in the so-called Catalan triangle of Figure \ref{fig: Catalan triangle}, whose rows sum to the Catalan numbers as $\sum_\beta B(\alpha,\beta) = C_\alpha$.  This means that our $\vert \mathcal{D}_n(k) \vert = B(n-1,n-k)$ may be read by proceeding from right-to-left along the appropriate row of Catalan's triangle.  See sequence A009766 on OEIS \cite{OEIS} for a thorough overview of Catalan triangle's, as well as Forder \cite{Forder} or Barcucci and Verri \cite{BV} for earlier investigations of the Ballot numbers.

\begin{figure}[ht!]
\centering
\begin{tabular}{c c c c c c}
1 & & & & & \\
1 & 1 & & & & \\
1 & 2 & 2 & & & \\
1 & 3 & 5 & 5 & & \\
1 & 4 & 9 & 14 & 14 & \\
1 & 5 & 14 & 28 & 42 & 42
\end{tabular}
\caption{Rows $0$ through $5$ of the Catalan triangle, whose $(\alpha,\beta)$ entry is the Ballot number $B(\alpha,\beta) = \frac{\alpha-\beta+1}{\alpha+1} \binom{\alpha+\beta}{\alpha}$.  Note that the leftmost column of the triangle corresponds to $\beta = 0$.}
\label{fig: Catalan triangle}
\end{figure}

Yet how does all of this relate to inverted standard Young tableaux?  In the example of Figure \ref{fig: tableau to lattice path}, notice that the associated tableau $T$ has $\dep(4)=\dep(6)=1$ and $\dep(i)=0$ for all other entries $i \in T$.  This suggests that a return to ground at $v_i$ in $P \in \mathcal{D}_\lambda$ is related to the inversion depth of $i$ in the associated standard Young tableau $T \in S(\lambda)$.  This proves to be a general phenomenon for any standard Young tableau of any two-row shape $\lambda$:

\begin{proposition}
\label{thm: returns to ground vs inversion depth}
Take $T \in S(\lambda)$ for some two-row shape $\lambda = (a,b)$, and let $P = \lbrace v_0,\hdots, v_{a+b} \rbrace$ be the associated Dyck path in $\mathcal{D}_\lambda$.  Then an entry $i$ of $T$ has $\dep(i) = 1$ if and only if $P$ has a return to ground at $v_i$.
\end{proposition}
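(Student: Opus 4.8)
The plan is to express both conditions through the sub-tableau $T[1,\dots,i]$ consisting of the cells of $T$ with value at most $i$, which (as $T$ is standard) is a Young diagram of some two-row shape $(r_i,s_i)$. Under the bijection of Figure~\ref{fig: tableau to lattice path} the $i^{th}$ step of $P$, from $v_{i-1}$ to $v_i$, is an $E$ step exactly when $i$ lies in the first row of $T$ and an $N$ step exactly when $i$ lies in the second row, so that $v_i=(r_i,s_i)$. I would begin with the observation that a return to ground at $v_i$---that is, $v_i=(x,x)$---can only be produced by an $N$ step, since the preceding point would otherwise be $(x-1,x)$, which violates the path's defining constraint $y\le x$. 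Consequently both conditions automatically fail when $i$ lies in the first row of $T$: writing $i$ in cell $(1,j)$, the entries of column $j+1$ (when that column is present) all carry values larger than that of $(1,j)$, and nothing sits above $i$ in column $j$, so $\dep(i)=0$, while at the same time there can be no return to ground at the $E$ step $v_i$.

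It remains to handle the case where $i$ occupies cell $(2,j)$. Here I would first record the relevant shape information: since $(2,j)$ is a cell of a two-row partition shape, the cells $(1,1),\dots,(1,j)$ all exist, each of their values is at most $i$ by column-standardness, and the second row of $T[1,\dots,i]$ has length exactly $j$ because values strictly increase along the second row and $(2,j)=i$. Hence $v_i=(r_i,j)$ with $r_i\ge j$, so $P$ returns to ground at $v_i$ precisely when $r_i=j$, equivalently precisely when the cell $(1,j+1)$ either does not exist or carries a value exceeding $i$.

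Finally I would compute $\dep(i)$ directly from the definition and compare. Column $j$ has exactly one entry of value below $i$, namely $(1,j)$; in column $j+1$ the only possible entry of value below $i$ is $(1,j+1)$, since $(2,j+1)$, if present, has value larger than $(2,j)=i$. Thus $\dep(i)=1-c$, where $c\in\{0,1\}$ equals $1$ exactly when $(1,j+1)$ exists with value less than $i$. So $\dep(i)=1$ if and only if $c=0$, i.e.\ if and only if $(1,j+1)$ is absent or has value exceeding $i$---the very condition found above for a return to ground---which finishes the proof. The only point demanding care throughout is the uniform bookkeeping of the boundary situations in which column $j+1$, or merely its top cell $(1,j+1)$, lies outside $T$ or outside $T[1,\dots,i]$; once these are absorbed into the statements above the equivalence is immediate, and I do not expect any genuine obstacle beyond this bookkeeping.
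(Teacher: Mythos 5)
Your proof is correct and takes essentially the same approach as the paper: both arguments identify $v_i$ with the shape of the sub-tableau $T[1,\hdots,i]$ and characterize $\dep(i)=1$ as the condition that this sub-tableau is a full rectangle, which is precisely $v_i$ lying on the diagonal. You simply carry out the local computation of $\dep(i)$ (and the exclusion of first-row entries, i.e.\ $E$-steps) in more explicit detail than the paper's terse statement.
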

\begin{proof}
Observe that an entry $i$ of $T$ has $\dep(i) = 1$ if and only if $i = 2m$ for some positive integer $m$, meaning that the first $m$ columns of $T$ coincide with some $\widetilde{T} \in S(\widetilde{\lambda})$ for $\widetilde{\lambda} = m^2$.  This occurs if and only if the first $2m$ steps in the associated $P \in \mathcal{D}_\lambda$ correspond to a Dyck path from $(0,0)$ to $(m,m)$.  Hence we may conclude that $\dep(i) = 1$ if and only if $v_i= (m,m)$ for some $m > 0$. 
\end{proof}

When $\lambda$ is a two-row shape, $\dep(i)=0$ and $\dep(i)=1$ are the only options for each entry $i$ of any $T \in S(\lambda)$.  Proposition \ref{thm: returns to ground vs inversion depth} then implies that an entry $i$ of $T$ has $\dep(i) = 1$ if the associated Dyck path $P \in \mathcal{D}_\lambda$ has a return to ground at $v_i$, while $\dep(i) = 0$ if $P$ does not have a return to ground at $v_i$.  This directly prompts our primary result of this subsection:

\begin{theorem}
\label{thm: two-row generating function as returns to ground}
Take any two-column shape $\lambda = (a,b)$.  Then the $S_k(\lambda)$ have generating function: 
$$\xi(q) \ = \ \sum_k \vert S_k(\lambda) \vert q^k \ = \ \sum_i \vert \mathcal{D}_\lambda(i) \vert (1+q)^i$$
\end{theorem}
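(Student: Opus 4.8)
The plan is to assemble the identity directly from the machinery of Section \ref{sec: generating functions} together with the Dyck-path correspondence just recalled. Starting from the tautological decomposition $\xi(q) = \sum_{T \in S(\lambda)} \chi^T(q)$, I would first invoke Corollary \ref{thm: generating function standard corollary}, which gives $\chi^T(q) = \prod_{i,j} [\dep(a_{ij})+1]_q$ for any standard $T$. The decisive structural point for a two-row shape $\lambda = (a,b)$ is that every entry of every $T \in S(\lambda)$ has inversion depth $0$ or $1$: a row-$1$ entry is the minimum of its (height-$\leq 2$) column so contributes a $\dep$ of $0$, while a row-$2$ entry $k$ in column $j$ has $\dep = 1 - c$ where $c \in \{0,1\}$ counts entries smaller than $k$ in column $j+1$ — and $c \le 1$ since by row-standardness only the row-$1$ entry of column $j+1$ can be smaller than $k$. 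Hence each factor $[\dep(a_{ij})+1]_q$ is either $[1]_q = 1$ or $[2]_q = 1+q$, so $\chi^T(q) = (1+q)^{d(T)}$, where $d(T)$ denotes the number of entries of $T$ with $\dep = 1$.

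Next I would transport $d(T)$ across the bijection $S(\lambda) \leftrightarrow \mathcal{D}_\lambda$. By Proposition \ref{thm: returns to ground vs inversion depth}, an entry $i$ of $T$ has $\dep(i) = 1$ precisely when the associated path $P = \lbrace v_0,\hdots,v_{a+b}\rbrace \in \mathcal{D}_\lambda$ has a return to ground at $v_i$; consequently $d(T) = \ret(P)$, and therefore $\chi^T(q) = (1+q)^{\ret(P)}$ for the path $P$ corresponding to $T$.

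Finally I would sum over tableaux, equivalently over paths, and then collate by the return-to-ground statistic:
$$\xi(q) \ = \ \sum_{T \in S(\lambda)} \chi^T(q) \ = \ \sum_{P \in \mathcal{D}_\lambda} (1+q)^{\ret(P)} \ = \ \sum_i \vert \mathcal{D}_\lambda(i) \vert \, (1+q)^i ,$$
where the last equality is just the observation that $\mathcal{D}_\lambda(i) = \lbrace P \in \mathcal{D}_\lambda : \ret(P) = i \rbrace$ partitions $\mathcal{D}_\lambda$. This is the claimed generating function.

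I do not expect a genuine obstacle here: the argument is essentially a chain of citations (Corollary \ref{thm: generating function standard corollary}, the $S(\lambda) \leftrightarrow \mathcal{D}_\lambda$ bijection, Proposition \ref{thm: returns to ground vs inversion depth}) glued together by a reindexing of a finite sum. The only step that requires an actual (if short) argument rather than a quotation is the claim that on a two-row shape $\dep$ never exceeds $1$ — i.e. that no entry of a two-row standard tableau can sit two or more ``levels deep'' — which I would settle by the column-count bound sketched above, using that each column has at most two cells. Everything after that is bookkeeping.
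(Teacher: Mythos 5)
Your proposal is correct and follows essentially the same route as the paper's own proof: it combines Corollary \ref{thm: generating function standard corollary} with Proposition \ref{thm: returns to ground vs inversion depth} to get $\chi^T(q) = (1+q)^{\ret(P)}$ for the path $P$ corresponding to $T$, and then sums over $T \in S(\lambda)$ grouped by the return-to-ground statistic. The only difference is that you spell out the (easy) fact that $\dep \in \{0,1\}$ on a two-row shape, which the paper simply asserts in the paragraph preceding the theorem.
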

\begin{proof}
Take any $T \in S(\lambda)$ and let $P \in \mathcal{D}_\lambda$ be the corresponding Dyck path.  By Corollary \ref{thm: generating function standard corollary} and Proposition \ref{thm: returns to ground vs inversion depth}, $\chi^T(q) = [2]_q^i = (1+q)^i$ if and only if $P \in \mathcal{D}_\lambda(i)$.  Ranging over all $T \in S(\lambda)$, for each $i \geq 1$ we have precisely $\vert D_\lambda(i) \vert$ contributions of the form $(1+q)^i$ to our overall generating function $\xi(q) = \sum_T \chi^T(q)$.  The stated result immediately follows.
\end{proof}

It should be noted that Fresse, Mansour and Melnikov \cite{FMM} have already derived a closed formula for each $\vert S_k(\lambda) \vert$ in the case of an arbitrary two-row shape $\lambda = (a,b)$.  Skip forward to Subsection \ref{subsec: two-row enumeration} to see our generalization of that formula to the semistandard case.  The reason that we still take time to derive the generating function of Theorem \ref{thm: two-row generating function as returns to ground}, apart from the fact that it highlights an interesting relationship between inverted tableau and Dyck paths, is that it easily generalizes to larger tableau for which closed formulas do not currently exist.  When $a=b=n$, Theorem \ref{thm: two-row generating function as returns to ground} also admits the following specialization that directly relates the $\vert S_k(\lambda) \vert$ to the Ballot numbers:

\begin{corollary}
\label{thm: two-row rectangular generating function as ballot numbers}
Let $\lambda = (n,n)$.  Then the $S_k(\lambda)$ have generating function:
$$\xi(q) \ = \ \sum_k \vert S_k(\lambda) \vert q^k \ = \ \sum_i B(n-1,n-i) (1+q)^i \ = \ \sum_i \frac{i}{n} \binom{2n - i - 1}{n-i} (1+q)^i$$
\end{corollary}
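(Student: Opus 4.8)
The plan is to obtain the corollary directly from Theorem \ref{thm: two-row generating function as returns to ground} by specializing to the square shape and then inserting the known count of Dyck paths by number of returns to ground. First I would set $a = b = n$ in Theorem \ref{thm: two-row generating function as returns to ground}, which immediately gives $\xi(q) = \sum_i |\mathcal{D}_n(i)| (1+q)^i$; at that point the only remaining task is to evaluate $|\mathcal{D}_n(i)|$.

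Next I would invoke the classical identity $|\mathcal{D}_n(k)| = B(n-1,n-k) = \frac{k}{n}\binom{2n-k-1}{n-1}$ recorded in the discussion preceding the corollary (attributable to the Catalan-triangle references cited there). If a self-contained argument were desired, the cleanest route is the standard first-return decomposition: a path in $\mathcal{D}_n$ with exactly $k$ returns to ground factors uniquely as a concatenation of $k$ nonempty primitive Dyck paths, each meeting the diagonal only at its two endpoints, and removing the initial $E$ and terminal $N$ of every block turns this into an arbitrary ordered sequence of $k$ Dyck paths of total semilength $n-k$; a Lagrange-inversion or cycle-lemma computation on the resulting generating function then yields $\frac{k}{n}\binom{2n-k-1}{n-1}$. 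Substituting this into $\xi(q) = \sum_i |\mathcal{D}_n(i)| (1+q)^i$ produces the middle expression of the corollary.

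Finally I would reconcile the two closed forms in the statement: the middle one uses $B(n-1,n-i) = \frac{i}{n}\binom{2n-i-1}{n-1}$, whereas the rightmost writes $\frac{i}{n}\binom{2n-i-1}{n-i}$, and these agree by the symmetry $\binom{2n-i-1}{n-1} = \binom{2n-i-1}{(2n-i-1)-(n-1)} = \binom{2n-i-1}{n-i}$. I do not anticipate a genuine obstacle; the only point deserving a sentence of care is that the summation index is understood to run over $1 \le i \le n$, which is precisely the range of $\ret(P)$ for $P \in \mathcal{D}_n$, since every such path has a forced return to ground at $v_{2n} = (n,n)$ and at most one return per unit of semilength.
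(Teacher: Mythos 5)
Your proposal is correct and matches the paper's (essentially implicit) argument: the corollary is just Theorem \ref{thm: two-row generating function as returns to ground} specialized to $a=b=n$, combined with the fact $\vert \mathcal{D}_n(k) \vert = B(n-1,n-k)$ already quoted in the discussion of the Catalan triangle, and your binomial symmetry remark correctly reconciles the two closed forms. The optional first-return decomposition you sketch is extra justification the paper does not include, but it does not change the route.
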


\begin{example}
\label{ex: two-row generating function} For $\lambda = (4,4)$ the $S_k(\lambda)$ have generating function:
$$\xi(q) \ = \ B(3,3)(1+q) + B(3,2)(1+q)^2 + B(3,1)(1+q)^3 + B(3,0)(1+q)^4$$
$$= \ 5(1+q) + 5(1+q)^2 + 3(1+q)^3 + 1(1+q)^4 \ = \ 14 + 28q + 20q^2 + 7q^3 + q^4$$
\end{example}

As one final comment about the two-row rectangular case, observe that the $\vert S_k(n,n) \vert$ resulting from Corollary \ref{thm: two-row rectangular generating function as ballot numbers} may be arranged into an integer triangle of their own.  This triangle is already known under a variety of different contexts as OEIS sequence A039599 \cite{OEIS}.  The relationship between the $\vert S_k(n,n) \vert$ and the other combinatorial interpretations of A039599 is an interesting topic for future investigation.

\subsection{m-Dimensional Dyck Paths and Inverted Young Tableaux of Shape $\lambda = n^m$}
\label{subsec: m-dimensional dyck paths}

Generalizing the methodology of Subsection \ref{subsec: two-row enumeration} to $m$-row standard Young tableaux requires $m$-dimensional analogues of Dyck paths.  So let $\lambda = (\lambda_1,\hdots,\lambda_m)$, where the $\lambda_i$ are positive integers, and set $N = \lambda_1 + \hdots \lambda_m$.  We adopt the standard notation of $\hat{e}_j$ as the unit vector in the $j^{th}$ coordinate of $\R^m$.  By an \textbf{m-dimensional Dyck path of shape} $\mathbf{\lambda}$ we mean a lattice path $P = \lbrace v_1,\hdots, v_N \rbrace$ with vertices $v_i \in \Z^m$ such that:

\begin{enumerate}
\item $v_0 = (0,\hdots,0)$ and $v_N = (\lambda_1,\hdots,\lambda_m)$.
\item For each $1 \leq i \leq N$ we have $v_i - v_{i-1} = \hat{e}_j$ for some $1 \leq j \leq m$.\\NOTE: If $v_i - v_{i-1} = \hat{e}_j$ we say that $P$ has a ``$j$-step" at $v_i$.
\item $v_i=(x_1,\hdots,x_m)$ satisfies $x_1 \leq x_2 \leq \hdots \leq x_m$ for all $1 \leq i \leq N$.
\end{enumerate}

By analogy with two-dimensional Dyck paths, we denote the set of all $m$-dimensional Dyck paths of shape $\lambda$ by $\mathcal{D}_\lambda = \mathcal{D}_{(\lambda_1,\hdots,\lambda_m)}$.  Also in direct analogy with two-dimensional Dyck paths, there is a well-known bijection between $\mathcal{D}_\lambda$ and $m$-row standard Young tableaux of shape $\lambda$.  Under this bijection, $P \in \mathcal{D}_\lambda$ has a $j$-step at $v_i$ if and only if $i$ appears in the $j^{th}$ row of the corresponding tableau $T$.  The fact that every point $v_i = (x_1,\hdots,x_m)$ along $P$ satisfies $x_1 \leq \hdots \leq x_m$ ensures that the corresponding tableau $T$ is in fact column standard.  Notice that, if $\lambda = n^m$ is a rectangular two-row shape, this bijection implies that $\vert \mathcal{D}_\lambda \vert$ is the so-called $m$-dimensional Catalan number $C_{d,n} = \left( \prod_{i=0}^{d-1} \frac{i!}{(n+i)!} \right) (dn)!$.  See OEIS sequence A060854 \cite{OEIS} or Gorska and Penson \cite{GP} for treatments of these numbers.

Before focusing upon the necessary properties of $m$-dimensional Dyck paths, we pause to establish a fact that follows quickly from the aforementioned bijection between $\mathcal{D}_\lambda$ and $S(\lambda)$.  Here we use the standard notation of $T[1,\hdots,i]$ to denote the sub-tableau of $T \in S(\lambda)$ that retains the cells of $T$ with values $1,\hdots,i$.

\begin{proposition}
\label{thm: Dyck path lattice points vs subtableau shape}
Take an $m$-row tableau $T \in S(\lambda)$ and let $P = \lbrace v_0,v_1,\hdots \rbrace$ be the corresponding $m$-dimensional Dyck path in $\mathcal{D}_\lambda$.  Then $T[1,\hdots,i]$ has shape $\lambda_i = (x_1,\hdots,x_m)$ if and only if $v_i = (x_1,\hdots,x_m)$.
\end{proposition}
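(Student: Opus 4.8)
The plan is to unwind the bijection between $m$-dimensional Dyck paths $\mathcal{D}_\lambda$ and standard Young tableaux $S(\lambda)$ recalled immediately above the statement, tracking coordinates one step at a time.

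First I would record what a single step of $P$ encodes. Under the bijection, $v_i - v_{i-1} = \hat{e}_j$ exactly when the value $i$ lies in the $j^{th}$ row of $T$. Writing $r(k)$ for the row of $T$ containing the value $k$, it follows that $v_i = v_0 + \sum_{k=1}^{i}(v_k - v_{k-1}) = \sum_{k=1}^{i}\hat{e}_{r(k)}$, so the $j^{th}$ coordinate $x_j$ of $v_i$ equals the number of indices $k \le i$ with $r(k) = j$ --- that is, the number of values $\le i$ occurring in the $j^{th}$ row of $T$.

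Next I would identify that count with a part of the shape of $T[1,\ldots,i]$. By definition $T[1,\ldots,i]$ is obtained from $T$ by deleting every cell whose value exceeds $i$. Since $T$ is both row-standard and column-standard, the set of cells with value $\le i$ is closed under moving up within a column and left within a row, so it forms a Young subdiagram of $\lambda$; hence $T[1,\ldots,i]$ has a well-defined shape, and the number of cells it contains in row $j$ is exactly the number of values $\le i$ sitting in row $j$ of $T$, which by the previous step is $x_j$. Therefore the shape of $T[1,\ldots,i]$ is precisely $(x_1,\ldots,x_m)$, and this tuple is $v_i$. Both directions of the ``if and only if'' then follow at once, since each side of the equivalence names the same tuple.

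This is a bookkeeping argument with no real obstacle; the only point deserving a careful sentence is that $T[1,\ldots,i]$ genuinely is a Young diagram shape (left- and top-justified), which is immediate from the standardness of $T$. One could equivalently package the whole thing as an induction on $i$: the claim is trivial at $i = 0$, and the inductive step simultaneously appends the cell holding value $i$ to row $r(i)$ of $T[1,\ldots,i-1]$ and advances $v_{i-1}$ to $v_i$ by the step $\hat{e}_{r(i)}$, keeping the two descriptions in lockstep.
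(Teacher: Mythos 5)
Your proposal is correct and follows essentially the same route as the paper: both arguments count the $j$-steps among the first $i$ steps of $P$ and identify that count with the number of values at most $i$ placed in row $j$ of $T$, which is the $j^{th}$ part of the shape of $T[1,\hdots,i]$. Your extra remark that the cells with values at most $i$ form a genuine Young subdiagram is a harmless bit of added care that the paper leaves implicit.
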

\begin{proof}
$v_i = (x_1,\hdots,x_m)$ if and only if the first $i$ steps of $P$ contain $x_1$ $1$-steps, $x_2$ $2$-steps, etc.  This corresponds to the situation where precisely $x_1$ elements of $[i] = \lbrace 1,\hdots,i \rbrace$ have been assigned to the first row of $T$, precisely $x_2$ elements of $[i]$ have been assigned to the second row of $T$, etc.
\end{proof}

The relevant statistic on $m$-dimensional Dyck paths is a higher-dimensional analogue of our two-dimensional ``returns to ground".  These ``returns" will be points $v_i = (x_1,\hdots,x_m)$ along $P \in \mathcal{D}_\lambda$ at which we have some sort of equality within $x_1 \leq \hdots \leq x_m$.  The difficulty here is that, even in the three-dimensional case, different lengths of equalities are possible within $x_1 \leq \hdots \leq x_m$ and some of these lengths are possible with different collections of coordinates.

Take an $m$-dimensional Dyck path $P = \lbrace v_0,\hdots v_N \rbrace$ in $\mathcal{D}_\lambda$, and let $v_{i-1} = (x_1,\hdots,x_m)$, $v_i = (x_1',\hdots,x_m')$ be two consecutive lattice points along $P$.  By definition, $x_\gamma' = x_\gamma + 1$ for precisely one $1 \leq \gamma \leq m$ while $x_i' = x_i$ for all other $1 \leq i \leq m$.  We say that $P$ has a \textbf{d-degree return to ground (in the $\mathbf{\gamma}$ coordinate)} at $v_i$ if $x_{\gamma-d}' = x_{\gamma-d+1}' = \hdots = x_{\gamma}'$ yet $x_{\gamma-d-1} \neq x_{\gamma}$.  If $P$ has an $(m-1)$-degree return to ground in the $m^{th}$ coordinate at $v_i$, meaning that $v_i = (x,x,\hdots,x)$ for some positive integer $x$, we say that $P$ has a \textbf{full return to ground} at $v_i$.

As the situation described above ensures $x_{\gamma + 1}' > x_{\gamma}'$, the degree $d$ is one less than the length of the longest string of equalities in the coordinates of $v_i$ that did not already exist in the coordinates of $v_{i-1}$.  Clearly, $d$-degree returns to ground are only possible in the $\mathbf{\gamma}$ coordinate if $\gamma > d$.  In the case of a two-dimensional Dyck path, the only valid returns to ground are 1-degree returns to ground in the second coordinate, which qualify as full returns to ground.  In this sense, our definition is a natural extension of the pre-existing notion of returns to ground for two-dimensional Dyck paths.

If $P \in \mathcal{D}_\lambda$ has precisely $k_i$ $i$-degree returns to ground for each $i \geq 1$ (disregarding the coordinates in which those returns appear), we write $\Ret(P) = (k_1,k_2,\hdots) = \vec{k}$.  If $P \in \mathcal{D}_\lambda$ possesses precisely $k$ full returns to ground, we write $\ret(P) = k$ in direct analogy with the two-dimensional case.  Define $\mathcal{D}_\lambda(\vec{k}) = \lbrace P \in \mathcal{D}_\lambda \ \vert \ \Ret(P) = \vec{k} \rbrace$ and $\mathcal{D}_\lambda(k) = \lbrace P \in \mathcal{D}_\lambda \ \vert \ \ret(P) = k \rbrace$.

Even in the case of $\lambda = n^m$, there has been no previous attempt to enumerate the $\mathcal{D}_\lambda (\vec{k})$ or the $\mathcal{D}_\lambda (k)$.  Such an attempt is clearly outside the scope of this paper.  We do pause to note that, if $\lambda = n^m$, ranging over $n \geq 1$ allows one to compile both the $\vert \mathcal{D}_\lambda (\vec{k}) \vert$ and the $\vert \mathcal{D}_\lambda(k) \vert$ into analogues of the Catalan triangle.  For the $\vert \mathcal{D}_\lambda(\vec{k}) \vert$ this would take the form an $(m+1)$-dimensional array such that the set of all integers in each $m$-dimensional ``tier" sums to the $m$-dimensional Catalan number $C_{d,n}$.  For the $\vert \mathcal{D}_\lambda(k) \vert$ we have a two-dimensional array that presumably has much more in common with the original Catalan triangle, with each row in the array summing to $C_{d,n}$.  One could define the entries in this second array to be ``$m$-dimensional ballot numbers", namely $B^m(n-1,n-k) = \vert \mathcal{D}_\lambda(k) \vert$ when $\lambda = n^m$.

As in the two-dimensional case of Subsection \ref{subsec: 2-dimensional dyck paths}, the interest of this paper lies not in the inherent enumerative properties of the $ \mathcal{D}_\lambda (\vec{k})$ but in how the sizes of these sets determine generating functions for inverted standard Young tableaux.  The most general result, which holds for any $m$-row shape, is the following generalization of Proposition \ref{thm: returns to ground vs inversion depth}:

\begin{theorem}
\label{thm: d-degree returns to ground vs inversion depth}
For an $m$-row shape $\lambda = (\lambda_1,\hdots \lambda_m)$, take $T \in S(\lambda)$ and let $P = \lbrace v_0,v_1,\hdots \rbrace$ be the associated $m$-dimensional Dyck path in $\mathcal{D}_\lambda$.  Then the entry $a_{ij} = k$ of $T$ has $\dep(k) = d$ if and only if $P$ has a $d$-degree return to ground at $v_k$.
\end{theorem}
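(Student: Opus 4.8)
The plan is to generalize the two-row argument of Proposition \ref{thm: returns to ground vs inversion depth} by translating the definition of $\dep(k)$ directly into a statement about the shape of the sub-tableau $T[1,\hdots,k]$, and then invoking Proposition \ref{thm: Dyck path lattice points vs subtableau shape} to convert that shape statement into a statement about the coordinates of $v_k$. First I would fix the entry $a_{ij} = k$ in row $i$ and column $j$ of $T$, so that placing $k$ into $T$ means attaching a box at the bottom of column $j$; in particular $T[1,\hdots,k]$ has exactly $j$ boxes in row $i$ (it is the last box added to that row among $1,\hdots,k$), and $i-1$ boxes in each of rows $1,\hdots,i-1$ that also reach column $j$. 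By Proposition \ref{thm: Dyck path lattice points vs subtableau shape}, if $T[1,\hdots,k]$ has shape $(x_1,\hdots,x_m)$ then $v_k = (x_1,\hdots,x_m)$; since $T$ is column-standard and $k$ sits at position $(i,j)$ we have $x_1 \geq x_2 \geq \cdots$ with $x_i = j$. (Here I am using the convention, consistent with the excerpt's figures, that $x_\gamma$ records the length of row $\gamma$; I would match this to the $x_1 \leq x_2 \leq \cdots$ convention of the $m$-dimensional Dyck path by the obvious reindexing, or simply phrase everything in terms of row lengths and cite the proposition accordingly.)

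Next I would compute $\dep(k)$ from this shape. By definition $\dep(a_{ij})$ is the number of entries $b < k$ in column $j$ of $T$ minus the number of entries $c < k$ in column $j+1$ of $T$. Because $T$ is column-standard, the entries $b < k$ in column $j$ are precisely the entries of $T[1,\hdots,k]$ lying in column $j$; these occupy rows $1,\hdots,i$ (the box at $(i,j)$ holds $k$ itself, and the proof should be careful about the off-by-one: entries strictly less than $k$ in column $j$ occupy rows $1,\hdots,i-1$ together with possibly more, but in fact exactly rows $1$ through $i$ of $T[1,\dots,k]$ reach column $j$, with row $i$'s rightmost box being $k$). More cleanly: the number of entries $\leq k$ in column $j$ equals $\#\{\gamma : x_\gamma \geq j\}$, and the number of entries $< k$ in column $j+1$ equals $\#\{\gamma : x_\gamma \geq j+1\}$ evaluated on the shape of $T[1,\dots,k-1]$, which differs from the shape of $T[1,\dots,k]$ only in that row $i$ is one box shorter. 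Carrying out this bookkeeping, $\dep(k)$ equals the number of rows $\gamma$ among $1,\hdots,i-1$ whose length in $T[1,\hdots,k]$ is exactly $j$ (i.e. equals $x_i = j$), which is precisely the number of indices $\gamma$ with $\gamma < i$ and $x_\gamma = x_i$. In the Dyck-path coordinates of $v_k$, this is exactly the length of the maximal run of equal coordinates ending at the $i$-th coordinate and not extending past it — i.e. $d$ where $x_{i-d} = \cdots = x_i$ but $x_{i-d-1} > x_i$.

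Finally I would match this to the definition of a $d$-degree return to ground. Since $k$ was just placed at $(i,j)$, the step $v_{k-1}\to v_k$ is a $\gamma$-step with $\gamma = i$, and the coordinate that just increased is the $i$-th; the run of equalities $x_{i-d} = \cdots = x_i$ at $v_k$ with $x_{i-d-1} > x_i$ is exactly a $d$-degree return to ground in the $i$-th coordinate. Conversely a $d$-degree return to ground at $v_k$ forces the last-placed box to create exactly this run, which by the computation above forces $\dep(k) = d$. Running both implications through Proposition \ref{thm: Dyck path lattice points vs subtableau shape} gives the equivalence.

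The main obstacle I anticipate is entirely bookkeeping: getting the off-by-one indices right when comparing "entries $< k$" versus "entries $\leq k$" across columns $j$ and $j+1$, and reconciling the two opposite monotonicity conventions (row lengths are weakly decreasing in $\gamma$, whereas the $m$-dimensional Dyck path coordinates are weakly increasing). I would handle this by choosing one convention at the outset — probably working directly with row lengths and quoting Proposition \ref{thm: Dyck path lattice points vs subtableau shape} under that convention — and by carefully separating the two cases according to whether column $j+1$ exists (the second term being $0$ when it does not, which correctly recovers $\dep(k) = \#\{\gamma < i : x_\gamma = x_i\}$). No deeper difficulty should arise, since every ingredient — the sub-tableau/lattice-point correspondence and the purely local nature of inversion depth — is already available.
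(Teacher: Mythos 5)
Your proposal is correct and follows essentially the same route as the paper: both translate $\dep(k)$ via Proposition \ref{thm: Dyck path lattice points vs subtableau shape} into a count of rows of $T[1,\hdots,k]$ having length exactly $j$ (equivalently, a maximal run of equal coordinates of $v_k$ ending at the coordinate just incremented), and then match that run length against the definition of a $d$-degree return to ground. Your explicit handling of the off-by-one bookkeeping and of the step $v_{k-1}\to v_k$ being an $i$-step only makes precise what the paper's proof leaves implicit.
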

\begin{proof}
Take an entry $a_{ij} = k$ in the $i^{th}$ row and $j^{th}$ column of $T$, and let $v_k = (x_1,\hdots,x_m)$ be the associated lattice point in $P \in \mathcal{D}_\lambda$.  By definition, $\dep(k) = i - 1 - \beta_k$, where $\beta_k$ is the number of entries in the $(j+1)^{st}$ column of $T$ that are smaller than $k$.  By Proposition \ref{thm: Dyck path lattice points vs subtableau shape}, $\beta_k$ corresponds to the number of rows in $T[1,\hdots,k]$ whose length is strictly greater than $j$.  It follows that $\dep(k) = \gamma_k - 1$, where $\gamma_k$ equals the number of rows in $T[1,\hdots,k]$ of length precisely $j$.  Once again citing Proposition \ref{thm: Dyck path lattice points vs subtableau shape}, $\gamma_j$ equals the longest string of equalities in the coordinates of $v_k$ whose final coordinate is $x_i$.  By definition, we have that $P$ possesses a $(\gamma_k - 1)$-degree return to ground at $v_k$.  Both directions of the theorem statement immediately follow.
\end{proof}

Those familiar with the work of Fresse will notice that the notation of the preceding proof falls far closer to that of \cite{Fresse} than our own notation of Section \ref{sec: generating functions}.  Theorem \ref{thm: Dyck path lattice points vs subtableau shape} lends an unexpected convenience to his sub-tableau approach if one wants to invoke facts about the related Dyck paths, despite the fact that his work is entirely unconcerned with Dyck paths analogues.  Our usage of the subtableau $T[1,\hdots,k]$ actually doesn't extend beyond Theorem \ref{thm: Dyck path lattice points vs subtableau shape}, as the approach of Section \ref{sec: generating functions} once again becomes much more convenient in applying that result toward a generating function $\xi(q)$:

\begin{theorem}
\label{thm: m-row generating function as returns to ground}
Let $\lambda = (\lambda_1,\hdots,\lambda_m)$.  Then the $S_k(\lambda)$ have generating function:
$$\xi(q) \ = \ \sum_k \vert S_k(\lambda) \vert q^k \ = \ \sum_{(i_1,\hdots,i_{m-1})} \left( \vert \mathcal{D}_\lambda (i_1,\hdots,i_{m-1}) \vert \prod_{j=1}^{m-1} [j+1]_q^{i_j} \right)$$
\end{theorem}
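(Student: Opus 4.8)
The plan is to combine Theorem~\ref{thm: d-degree returns to ground vs inversion depth} with Corollary~\ref{thm: generating function standard corollary}, exactly mirroring the argument that produced Theorem~\ref{thm: two-row generating function as returns to ground} in the two-row case, but now keeping track of returns to ground of every degree rather than just full returns. First I would fix $T \in S(\lambda)$ and let $P = \lbrace v_0, v_1, \hdots \rbrace$ be the associated $m$-dimensional Dyck path in $\mathcal{D}_\lambda$. By Corollary~\ref{thm: generating function standard corollary}, $\chi^T(q) = \prod_{i,j} [\dep(a_{ij})+1]_q$, so the whole problem reduces to understanding the multiset of values $\lbrace \dep(a_{ij}) \rbrace$ ranging over all cells of $T$. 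Theorem~\ref{thm: d-degree returns to ground vs inversion depth} tells us precisely that a cell with entry $k$ has $\dep(k) = d$ exactly when $P$ has a $d$-degree return to ground at $v_k$; note in particular that $\dep(k) = 0$ (contributing a trivial factor $[1]_q = 1$) corresponds to $v_k$ not being a return to ground of any positive degree, so those cells may be ignored.

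The next step is bookkeeping. If $\Ret(P) = (i_1, i_2, \hdots, i_{m-1})$, meaning $P$ has exactly $i_j$ returns to ground of degree $j$ for each $1 \le j \le m-1$ (degrees above $m-1$ being impossible since a lattice point in $\Z^m$ has at most $m$ equal coordinates, hence at most an $(m-1)$-degree return), then Theorem~\ref{thm: d-degree returns to ground vs inversion depth} identifies the multiset $\lbrace \dep(a_{ij}) : \dep(a_{ij}) > 0 \rbrace$ with the multiset containing the value $j$ with multiplicity $i_j$. Plugging into Corollary~\ref{thm: generating function standard corollary} gives
$$\chi^T(q) \ = \ \prod_{j=1}^{m-1} [j+1]_q^{i_j},$$
a generating function that depends on $T$ only through $\Ret(P)$. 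I would then sum over all $T \in S(\lambda)$, equivalently over all $P \in \mathcal{D}_\lambda$ via the bijection, grouping paths according to the value of $\Ret(P)$. For each fixed tuple $(i_1, \hdots, i_{m-1})$ there are exactly $\vert \mathcal{D}_\lambda(i_1, \hdots, i_{m-1}) \vert$ paths contributing the common factor $\prod_{j=1}^{m-1}[j+1]_q^{i_j}$, which yields
$$\xi(q) \ = \ \sum_T \chi^T(q) \ = \ \sum_{(i_1,\hdots,i_{m-1})} \vert \mathcal{D}_\lambda(i_1,\hdots,i_{m-1}) \vert \prod_{j=1}^{m-1}[j+1]_q^{i_j},$$
as claimed.

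I expect the main obstacle to be essentially notational rather than conceptual: one must be careful that the degree of a return to ground is bounded by $m-1$, so that the product over $j$ legitimately terminates at $m-1$ and no cell of $T$ can have $\dep(a_{ij}) \ge m$ — this follows since $\dep(a_{ij})$ is at most one less than the number of rows of equal length in a subtableau, which is at most $m$. A secondary point of care is that entries $a_{ij}$ with $\dep(a_{ij}) = 0$ contribute the factor $[1]_q = 1$ and so are correctly invisible on both sides; this is why the count of returns to ground of positive degree suffices and the sum is genuinely finite. Beyond that, the argument is a direct transcription of the two-row proof, with $\mathcal{D}_\lambda(i)$ and the scalar $i$ replaced by $\mathcal{D}_\lambda(\vec{i})$ and the tuple $\vec{i}$, and $(1+q)^i = [2]_q^i$ generalized to $\prod_j [j+1]_q^{i_j}$.
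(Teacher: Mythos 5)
Your proposal is correct and follows essentially the same route as the paper's own proof: combine Corollary \ref{thm: generating function standard corollary} with Theorem \ref{thm: d-degree returns to ground vs inversion depth} to see that $\chi^T(q) = \prod_{j=1}^{m-1}[j+1]_q^{i_j}$ exactly when the associated path lies in $\mathcal{D}_\lambda(i_1,\hdots,i_{m-1})$, then sum over all $T \in S(\lambda)$ grouped by $\Ret(P)$. Your added remarks about degrees being bounded by $m-1$ and zero-depth entries contributing trivial factors are fine but are details the paper leaves implicit.
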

\begin{proof}
Take $T \in S(\lambda)$ and let $P \in \mathcal{D}_\lambda$ be the associated Dyck path.  Corollary \ref{thm: generating function standard corollary} and Theorem \ref{thm: d-degree returns to ground vs inversion depth} together imply that $\chi^T(q) = \prod_{j=1}^{m-1} [j+1]_q^{i_j}$ if and only if $P \in \mathcal{D}_\lambda (i_1,\hdots,i_{m-1})$.  Ranging over all $T \in S(\lambda)$, there are precisely $\vert \mathcal{D}_\lambda (i_1,\hdots,i_{m-1}) \vert$ contributions of the form $\prod_{j=1}^{m-1} [j+1]_q^{i_j}$ to $\xi(q) = \sum_T \chi^T(q)$.  The theorem immediately follows.
\end{proof}

\section{Direct Enumerations of Inverted Young Tableaux}
\label{sec: direct enumeration}

In this final section, we present direct enumerations of the $\vert S_k(\lambda,\mu) \vert$ whose methodologies are completely distinct from the generalized Dyck path approach of Section \ref{sec: generalized ballot numbers}.  All of these methods make direct use of our generating function $\chi^T(q)$ from Theorem \ref{thm: fixed standardization generating function} or our notions of inversion depth.  This allows us to tackle a somewhat different collection of cases than previously attempted, and to generalize some pre-existing results to the semistandard case for the very first time.  In particular, we develop an explicit formula for the $\vert S_k(\lambda,\mu) \vert$ in the general two-row case of $\lambda = a^1 b^1$.  Opening up a wider range of tractable shapes by specializing to standard tableau, we close with an explicit enumeration of $\vert S_k(\lambda) \vert$ for the two-column rectangular case of $\lambda = 2^n$.

\subsection{Enumeration of $S_k(\lambda,\mu)$ for $\lambda = a^1 b^1$}
\label{subsec: two-row enumeration}

In the standard specialization, Fresse, Mansour and Melnikov \cite{FMM} have already explicitly enumerated the $S_k(\lambda)$ for a general two-row shape $\lambda = (a,b)$.  Theorem 2.1 of \cite{FMM} gives:

\begin{equation}
\label{eq: FMM two-row enumeration}
\vert S_k(\lambda) \vert \ = \ \beta_{b-k}^\lambda \ = \ \frac{a-b+1+2k}{a+1+k} \binom{a+b}{b-k}
\end{equation}

In this equation, $\beta_j^\lambda$ denotes the Betti number of the associated Springer variety $F_\lambda$, an algebraic interpretation that we do not make use of here.  The shift in subscript follows from the fact that $\dim(f_\lambda) = b$, a quantity that equals the maximal inversion number $M_\lambda$ for $\tau \in I(\lambda)$.

The $S_k(\lambda,\mu)$ were independently enumerated in the semistandard generalization by the author \cite{Drube}, but only in the rectangular case of $a=b=n$.  Theorem 14 of \cite{Drube} equated $\vert S_k(\lambda,\mu) \vert$ with a product of various Catalan numbers whose subscripts strongly partition $n$.  One can verify that this enumeration agrees with Equation \ref{eq: FMM two-row enumeration} as $\vert S_k(\lambda) \vert = \frac{1+2k}{n+1+k} \binom{2n}{n-k}$ for standard content $\mu$ if one rewrites the Catalan products of the author \cite{Drube} as an appropriate two-parameter Fuss-Catalan number (Raney number) via a modification of the results from Hilton and Pedersen \cite{HP}.

Here we directly extend Equation \ref{eq: FMM two-row enumeration} to the semistandard case in a way that recovers the formula of the author \cite{Drube} when $a=b=n$.  Essential to our technique is the invariance of $\vert S_k(\lambda,\mu) \vert$ under permutation of content, as originally presented by the author \cite{Drube}:

\begin{theorem}[\textbf{Theorem 11 of \cite{Drube}}]
\label{thm: permutation invariance}
For any shape $\lambda$, compatible content $\mu = 1^{\mu_1} 2^{\mu_2} \hdots M^{\mu_M}$, and permutation $\sigma \in S_M$, $\vert S_k(\lambda,\mu) \vert = \vert S_k (\lambda,\sigma(\mu)) \vert$ for all $k \geq 0$. 
\end{theorem}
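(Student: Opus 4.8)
The plan is to reduce to adjacent transpositions and then run a Bender--Knuth--type argument through the standardization map. Since the adjacent transpositions $s_1,\dots,s_{M-1}$ generate $S_M$, and the asserted equality $\vert S_k(\lambda,\mu)\vert=\vert S_k(\lambda,\sigma(\mu))\vert$ composes transitively in $\sigma$ (note that $s_i\mu$ is again an admissible content because all $\mu_j\ge 1$), it is enough to treat the case $\sigma=s_i$ for each $i$ and each content. Packaging the numbers $\vert S_k(\lambda,\mu)\vert$ into the polynomial $\sum_k\vert S_k(\lambda,\mu)\vert q^k$, which by Theorem~\ref{thm: fixed standardization generating function} equals $\sum_{T\in S(\lambda,\mu)}\chi^T(q)$, the goal becomes to produce a bijection $\beta\colon S(\lambda,\mu)\to S(\lambda,s_i\mu)$ with $\chi^{\beta(T)}(q)=\chi^T(q)$ for every standardization $T$.

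For $\beta$ I would take the Bender--Knuth involution on the pair of values $\{i,i+1\}$, transposed to match this paper's convention that rows increase strictly and columns increase weakly: within each column of $T$, freeze every cell of value $i$ whose immediate right neighbour has value $i+1$ together with that neighbour (so the frozen cells form horizontal dominoes), and then within each column interchange the number of unfrozen $i$'s with the number of unfrozen $(i+1)$'s. The transpose of the classical verification shows $\beta$ is a well-defined bijection $S(\lambda,\mu)\to S(\lambda,s_i\mu)$ that fixes the shape of each column and, in each column $c$, fixes both the set of cells of value $\le i-1$ and the set of cells of value $\ge i+2$; in particular it preserves $m_c(i)+m_c(i+1)$, where $m_c(v)$ denotes the multiplicity of the value $v$ in column $c$.

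The remaining and, I expect, most delicate step is to verify $\chi^{\beta(T)}(q)=\chi^T(q)$. From the proof of Theorem~\ref{thm: fixed standardization generating function} one has the column-by-column product $\chi^T(q)=\prod_c\prod_v\binom{\dep_c(v)+m_c(v)}{m_c(v)}_q$, where $v$ ranges over the values occurring in column $c$ and $\dep_c(v)$ is the common standard inversion depth of the value-$v$ entries there. By the structural properties of $\beta$ above, every factor with $v\notin\{i,i+1\}$ is unchanged, so only the $v=i$ and $v=i+1$ factors must be reconciled. This cannot be carried out one column at a time, since $\dep_c(i+1)$ involves the multiplicity of $i$ in the neighbouring column $c+1$ (the depths $\dep_c(i)$ and $\dep_c(i+1)$ differ by a term recording the $i$-multiplicities of columns $c$ and $c+1$), so the $v=i+1$ factor of column $c$ is coupled to the $v=i$ data of column $c+1$. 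The bookkeeping that should make the whole product invariant is that the value-$i$ cells frozen in column $c$ correspond bijectively to the value-$(i+1)$ cells frozen in column $c+1$; substituting the resulting description of how $\beta$ modifies each $m_c(i)$, hence each $\dep_c(i)$ and $\dep_c(i+1)$, into the product $\prod_c\binom{\dep_c(i)+m_c(i)}{m_c(i)}_q\binom{\dep_c(i+1)+m_c(i+1)}{m_c(i+1)}_q$ should telescope across adjacent columns and leave it unchanged. Granting $\chi^{\beta(T)}(q)=\chi^T(q)$, summing over $T\in S(\lambda,\mu)$ yields $\sum_k\vert S_k(\lambda,\mu)\vert q^k=\sum_k\vert S_k(\lambda,s_i\mu)\vert q^k$, hence $\vert S_k(\lambda,\mu)\vert=\vert S_k(\lambda,s_i\mu)\vert$ for all $k$; iterating over $i=1,\dots,M-1$ gives the theorem.
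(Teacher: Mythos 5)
Your reduction to adjacent transpositions and the idea of comparing $\sum_k \vert S_k(\lambda,\mu)\vert q^k = \sum_{T\in S(\lambda,\mu)} \chi^T(q)$ via Theorem~\ref{thm: fixed standardization generating function} are reasonable, but the step you defer (``should telescope'') is not merely delicate --- it is false, and not just for Bender--Knuth: no bijection $\beta\colon S(\lambda,\mu)\to S(\lambda,s_i\mu)$ with $\chi^{\beta(T)}(q)=\chi^{T}(q)$ exists in general, because the multiset $\lbrace \chi^T(q)\rbrace$ is not invariant under permuting the content; only its sum is. Concretely, take $\lambda=(2,2,1)$, $i=2$, $\mu=1^2 2^2 3^1$, $s_2\mu = 1^2 2^1 3^2$. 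Then $S(\lambda,\mu)$ consists of the two tableaux with rows $(1,2),(1,3),(2)$ and $(1,2),(1,2),(3)$, whose generating functions are $(1+q)^2$ and $1$ respectively, while $S(\lambda,s_2\mu)$ consists of the tableaux with rows $(1,3),(1,3),(2)$ and $(1,2),(1,3),(3)$, whose generating functions are $1+q+q^2$ and $1+q$ (all four are easily checked by listing the row-standard rearrangements of each column). Both sides sum to $2+2q+q^2$, as Theorem~\ref{thm: permutation invariance} requires, but the multisets $\lbrace (1+q)^2,\,1\rbrace$ and $\lbrace 1+q+q^2,\,1+q\rbrace$ differ. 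Moreover your transposed Bender--Knuth involution is exactly the map that exhibits the failure: on the first tableau nothing freezes (no entry $2$ has a $3$ immediately to its right), so $\beta$ sends it to the tableau with rows $(1,2),(1,3),(3)$, carrying $\chi^T=(1+q)^2$ to $\chi^{\beta(T)}=1+q$.

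So the content symmetry of $\sum_k\vert S_k(\lambda,\mu)\vert q^k$ is a genuinely global cancellation among the $\chi^T$ and cannot be proven standardization-by-standardization; any repair of your outline must either produce an $\inv$-preserving bijection $I(\lambda,\mu)\to I(\lambda,\sigma(\mu))$ directly on inverted tableaux (one that need not commute with $\st$), or establish the symmetry of the summed generating function by other means. Note also that the present paper contains no proof of this statement to compare against: it is imported verbatim as Theorem 11 of \cite{Drube}, where the argument is carried out for inverted semistandard tableaux themselves rather than by matching fixed standardizations.
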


For any two-row shape $\lambda = (a,b)$, clearly the only contents $\mu$ that allow for row-standard tableau are those where each value appears at most twice.  Using Theorem \ref{thm: permutation invariance}, we are justified in restricting our attention to $\mu$ where $1,2,\hdots,m$ each appear twice and $m+1,m+2,\hdots,a+b-2m$ each appear once.  This allows for the following:

\begin{theorem}
\label{thm: two-row enumeration}
Let $\lambda=(a,b)$ for any $a \geq b \geq 1$, and let $\mu = 1^{\mu_1} 2^{\mu_2} \hdots M^{\mu_M}$ be any content with $\sum_i \mu_i = a+b$.  If $\mu_i = 2$ for $m$ choices of $i$ and $\mu_i = 1$ for the remaining $a+b-2m$ choices of $i$, then:

$$\vert S_k(\lambda,\mu) \vert \ = \ \frac{a-b+1+2k}{a+1+k-m} \binom{a+b-2m}{b-k-m}$$
\end{theorem}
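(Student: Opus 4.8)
The plan is to build a two-step reduction. First I would use Theorem~\ref{thm: permutation invariance} to reduce the general semistandard case to the standardization-specific data that Theorem~\ref{thm: fixed standardization generating function} controls. Because $\lambda = (a,b)$ has only two rows, every column of any $T \in S(\lambda,\mu)$ has height at most $2$, so the only entries $a_{ij}$ with $\depm(a_{ij}) - \dep(a_{ij}) \neq 0$ are the lower copies of a doubled value sitting in a height-$2$ column, and for those $\depm - \dep + 1 = 2$ and $\depm + 1 = 2$, so the numerator and denominator factors in $\chi^T(q)$ coming from doubled values in full-height columns \emph{cancel exactly}. Thus $\chi^T(q) = (1+q)^{r(T)}$ where $r(T)$ counts the entries of $T$ with $\dep(a_{ij}) = 1$ \emph{that are not the lower copy of a doubled value in a height-2 column}; equivalently, by the two-row analysis already in Subsection~\ref{subsec: 2-dimensional dyck paths}, $r(T)$ is a ``returns to ground'' count on an associated (generalized) Dyck-type path. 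So $\xi(q) = \sum_T \chi^T(q)$ is again of the form $\sum_i c_i (1+q)^i$, and extracting $\vert S_k(\lambda,\mu)\vert = [q^k]\xi(q) = \sum_i c_i \binom{i}{k}$ reduces the whole problem to counting how many $T \in S(\lambda,\mu)$ have exactly $i$ such ``returns.''

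Second, I would count those $T$ directly. Fix the normalized content ($m$ doubled values, $a+b-2m$ single values). A semistandard $T$ of two-row shape is equivalent to a lattice path that behaves like a Dyck path on the ``new'' (single-value, or first-copy) steps but is forced to stay off the diagonal whenever a doubled value is ``in progress.'' The cleanest bookkeeping is to track the path after the $2m$-step contraction that glues each doubled value's two boxes together: contracting each doubled pair, a tableau in $S(\lambda,\mu)$ maps to a standard-like skew object, and the entries with $\dep = 1$ that survive the numerator/denominator cancellation correspond exactly to the diagonal touches of the contracted path from $(0,0)$ to $(a-m, b-m)$ that occur at the \emph{end} of a contracted step. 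Counting paths from $(0,0)$ to $(a-m,b-m)$ staying weakly below the diagonal with exactly $i$ such returns is the standard ballot/Catalan-triangle computation: the number of such paths is a ballot number, and summing $\sum_i (\text{that ballot number}) \binom{i}{k}$ collapses — via the reflection principle, exactly as in the derivation of Equation~\eqref{eq: FMM two-row enumeration} — to $\frac{a-b+1+2k}{a+1+k-m}\binom{a+b-2m}{b-k-m}$. In fact the cleanest route may be to skip the intermediate ballot numbers entirely and argue directly: by the same reflection argument used by Fresse--Mansour--Melnikov, $\vert S_k(\lambda,\mu)\vert$ counts pairs (choice of which $b-k$ of the $a+b-2m$ contracted North-steps lie above-or-on the diagonal after shifting) minus a reflected overcount, which is exactly the displayed difference of two binomials $\binom{a+b-2m}{b-k-m} - (\text{shifted})$, and that difference simplifies to the stated rational-coefficient single binomial.

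The main obstacle I expect is pinning down the contraction precisely enough that the ``which entries have $\dep = 1$ and survive cancellation'' statement is rigorously matched with ``which lattice points of the contracted path are diagonal returns.'' The subtlety is that a doubled value can straddle the diagonal in several configurations (both copies in row 1; both in row 2; one in each — the latter only if the column is height 2), and one must check that in every case the contribution to $\chi^T(q)$ is genuinely $1$ (full cancellation) rather than $(1+q)$ or $(1+q)^0$ spuriously, and that the contracted step lands on the diagonal precisely when the corresponding standard tableau would have had a return-to-ground there. Once that dictionary is established, everything else is the routine binomial identity $\sum_i \frac{i-(\text{const})}{(\text{stuff})}\binom{\cdots}{\cdots}\binom{i}{k} = \frac{a-b+1+2k}{a+1+k-m}\binom{a+b-2m}{b-k-m}$, which I would verify by the reflection principle rather than by manipulating $q$-binomials. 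As a consistency check I would confirm that setting $m=0$ recovers Equation~\eqref{eq: FMM two-row enumeration} and that setting $a=b=n$ recovers Theorem 14 of~\cite{Drube} after the Fuss--Catalan/Raney rewriting mentioned in the text.
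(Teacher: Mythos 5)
Your opening reduction is sound: in a two-row shape the only entries with $\depm(a_{ij})-\dep(a_{ij})\neq 0$ are the lower members of a doubled value stacked in a single column, and for such an entry $\depm=1$, $\dep=0$, so its factor $[2]_q/[2]_q$ cancels and indeed $\chi^T(q)=(1+q)^{r(T)}$ with $r(T)$ the number of entries of inversion depth $1$. From there, however, the proposal stops being a proof. Your case bookkeeping is already off: row-standardness (strict increase along rows) forbids two copies of a value in the same row, so ``both copies in row 1 / both in row 2'' cannot occur; the configuration you actually must handle is the two copies in \emph{different} columns, the row-2 copy strictly to the left of the row-1 copy, and for that configuration ``gluing the doubled value's two boxes into one contracted step'' is not a well-defined operation on the lattice word (the two boxes are a North step and an East step occurring at different times). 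Consequently the central dictionary --- that entries with $\dep=1$ correspond to diagonal returns of a contracted path to $(a-m,b-m)$, and that the number of tableaux with a prescribed number of such returns is a ballot number --- is precisely the hard content, and it is only asserted (you flag it yourself as the main obstacle). The closing identity collapsing $\sum_i c_i\binom{i}{k}$ to $\frac{a-b+1+2k}{a+1+k-m}\binom{a+b-2m}{b-k-m}$ is likewise left unverified. So as written there is a genuine gap in the second half of the argument.

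The paper's proof avoids all of this, and you had the key tool in hand: Theorem \ref{thm: permutation invariance} lets you \emph{choose which values are doubled}, and taking them to be $1,2,\dots,m$ forces every doubled pair to sit stacked in its own column, so the first $m$ columns of every $T\in S(\lambda,\mu)$ are frozen and contribute only trivial factors to $\chi^T(q)$ by Theorem \ref{thm: fixed standardization generating function}. Deleting those columns and re-indexing $x\mapsto x-2m$ is then a $\chi$-preserving bijection between $S(\lambda,\mu)$ and $S\big((a-m,b-m)\big)$, and the stated formula is just Equation \eqref{eq: FMM two-row enumeration} applied to the shape $(a-m,b-m)$ --- no contraction, lattice-path dictionary, or reflection argument is needed. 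If you want to keep your route, the honest version would first make this same normalization (so the doubled values are rigid) and would then amount to re-deriving \eqref{eq: FMM two-row enumeration} via the returns-to-ground generating function of Theorem \ref{thm: two-row generating function as returns to ground}; without that normalization, the contraction step needs a real construction and proof, not a sketch.
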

\begin{proof}
As previously argued, Theorem \ref{thm: permutation invariance} allows us to restrict our attention to the specific content $\mu$ where $1,2,\hdots,m$ each appear twice and $m+1,m+2,\hdots,a+b-2m$ each appear once.  Any tableau with this $\lambda$ and $\mu$ must take the form shown in Figure \ref{fig: two-row example}:

Such tableaux are clearly in bijection with standard Young tableaux of shape $\widetilde{\lambda} = (a-m,b-m)$, under the map that re-indexes entries in the final $b-m$ columns by $x \mapsto x-2m$.  For each $T \in S(\lambda,\mu)$, this bijection preserves the associated generating function $\chi^T(q)$.  This is due to the fact that all entries $a_{ij}$ in the first $m$ columns of $T$ have $\dep(a_{ij}) = \depm(a_{ij}) = 0$ and hence do not contribute to $\chi^T(q)$ by Theorem \ref{thm: fixed standardization generating function}.  It follows that $\xi(q) = \sum_k \vert S_k(\lambda,\mu) \vert = \sum_k \vert S_k(\widetilde{\lambda}) \vert$.  For any $k \geq 0$, directly applying Equation \ref{eq: FMM two-row enumeration} then gives:

$$\vert S_k (\lambda,\mu) \vert \ = \ \vert S_k (\widetilde{\lambda}) \vert \ = \ \frac{(a-m) - (b-m) + 1 + 2k}{(a-m)+1+k} \binom{(a-m) + (b-m)}{(b-m) - k}$$
\end{proof}

\begin{figure}[ht!]
\centering
\begin{ytableau}
1 & 2 & \hdots & m & *(gray!40) & *(gray!40) \hdots & *(gray!40) & *(gray!40) \hdots & *(gray!40) \\
1 & 2 & \hdots & m & *(gray!40) & *(gray!40) \hdots & *(gray!40)
\end{ytableau}

\caption{An arbitrary $T \in S(\lambda,\mu)$ with $\lambda = (a,b)$ and $\mu = 1^2 \hdots m^2 (m+1)^1 \hdots (a+b-2m)^1$.  Here the shaded cells correspond to some standard Young tableau under the re-indexing $x \mapsto x-2m$.}
\label{fig: two-row example}
\end{figure}

\subsection{Enumeration of $S_k(\lambda)$ for $\lambda = 2^n$}
\label{subsec: two-column enumeration}

Enumeration of the $\vert S_k(\lambda,\mu) \vert$ in the two-column rectangular case of $\lambda = 2^n$ has yet to appear anywhere in the literature, even for standard tableaux.  In this final subsection, we tackle the two-column case in the standard specialization.

Pause to observe that this entire subsection is made necessary because inversion depths, and hence the $\vert S_k(\lambda) \vert$, are not preserved under transposition.  If $\bar{\lambda}$ denotes the conjugate partition of $\lambda$ (obtained by transposing of the underlying Young diagram), it is well known that for standard Young tableaux we have $\vert S(\lambda) \vert = \vert S(\bar{\lambda}) \vert$.  However, it is typically not true that $\vert S_k(\lambda) \vert = \vert S_k(\bar{\lambda}) \vert$ for $k \geq 1$ if $\bar{\lambda}$ is in fact distinct from $\lambda$.  For a basic example of this fact, consider any one-row shape $\lambda = m^1$, so that $\bar{\lambda} = 1^m$.  Clearly $\vert S_k(\lambda) \vert = 0$ for all $k > 0$, yet $\vert S_k(\bar{\lambda}) \vert \neq 0$ for all $k \leq \binom{m}{2}$ via an application of Corollary \ref{thm: generating function standard corollary}.

To obtain a generating function $\xi(q)$ for the $\vert S_k (\lambda) \vert$, we determine which $T$-specific generating functions $\chi^T(q)$ are possible for $T \in S(\lambda)$, and then develop a closed formula for the number $\phi(\chi)$ of tableaux with a fixed $\chi^T(q)$.  As in Section \ref{sec: generalized ballot numbers}, this allows us to conclude that $\xi(q) = \sum_\chi \phi(\chi) \chi^T(q)$.  The first step in this process is a recognition of the fact that, for two-column tableaux, there is at most one $T \in S(\lambda)$ with a particular arrangement of inversion depths across its entries:

\begin{lemma}
\label{thm: two-column inversion depths uniquely identify}
Let $\lambda = 2^m 1^{n-m}$, and take $T \in S(\lambda)$.  If $a_1,\hdots,a_n$ are the entries in the first column of $T$, read from top to bottom, then $T$ is uniquely identified by the collection of inversion depths $\dep(a_i) = c_i$.
\end{lemma}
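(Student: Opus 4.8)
The plan is to use the inversion-depth data to write down the first column of $T$ explicitly; since a standard Young tableau of a two-column shape is completely determined by the set of values in its first column (each column must then be filled in increasing order, and there is exactly one way to do so), an explicit reconstruction of $\lbrace a_1,\hdots,a_n \rbrace$ yields the asserted uniqueness. So the real task is to recover $a_1 < \hdots < a_n$ from the sequence $(c_1,\hdots,c_n)$.

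First I would unwind the definition of $\dep$ in this setting. Write $b_1 < \hdots < b_m$ for the entries of the second column of $T$, and for each $i$ set $d_i = \vert \lbrace j : b_j < a_i \rbrace \vert$, the number of second-column entries that undercut $a_i$ in value. Because the first column is column-standard, the first-column entries below $a_i$ are exactly $a_1,\hdots,a_{i-1}$, so the definition of standard inversion depth gives $\dep(a_i) = (i-1) - d_i$; hence $d_i = (i-1) - c_i$ is recovered from the data.

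Next I would convert this into a closed formula for $a_i$. Since the entries of $T$ are exactly $1,\hdots,n+m$ with no gaps, $a_i$ equals one more than the total number of values below it, and those values are precisely the $i-1$ first-column entries $a_1,\hdots,a_{i-1}$ together with the $d_i$ second-column entries smaller than $a_i$. Therefore $a_i = i + d_i = 2i - 1 - c_i$. This determines every first-column entry, hence the first-column value set $\lbrace 2i - 1 - c_i : 1 \le i \le n \rbrace$, and taking the complement in $\lbrace 1,\hdots,n+m \rbrace$ gives the second column; thus $T$ itself is recovered. Consequently any two tableaux in $S(\lambda)$ with the same depth sequence have identical first columns and so coincide.

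Every step is elementary once the right auxiliary quantity — the number $d_i$ of second-column entries lying beneath $a_i$ — has been isolated, so I do not anticipate a genuine obstacle. The one point deserving care is the rank count $a_i = i + d_i$, which rests on the observation that any integer strictly between two consecutive first-column entries must belong to the second column; it is also worth remarking that only the first-column inversion depths are used, since every second-column entry satisfies $\dep(b_i) = i - 1$ and contributes nothing, which is exactly why the statement is phrased in terms of the $a_i$ alone. (Note that this lemma establishes only injectivity of the map $T \mapsto (c_i)$; identifying which depth sequences actually occur is a separate matter addressed subsequently.)
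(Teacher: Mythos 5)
Your proposal is correct and follows essentially the same route as the paper: both unwind the definition of $\dep$ to get $\dep(a_i) = (i-1) - \beta_i$ with $\beta_i$ the number of second-column entries below $a_i$ in value, use the two-column structure to get $a_i = i + \beta_i$, and conclude $a_i = 2i - 1 - c_i$, so the depth sequence determines the first column and hence $T$. Your added remarks (the second column is the complement filled in increasing order, and second-column depths are forced to be $i-1$) mirror the paper's own framing, so there is nothing to flag.
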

\begin{proof}
Pause to note that, if $b_1,\hdots,b_m$ are the entries in the second (rightmost) column of $T$, read from top to bottom, then $\dep(b_i) = i-1$ no matter our choice of $T$.  This is why we require only the inversion depths of the $a_i$ to distinguish tableaux in $S(\lambda)$.

So fix $T \in S(\lambda)$ and let $a_1,\hdots,a_n$ be the entries in the first column of $T$.  By definition, $\dep(a_i) = i - 1 - \beta_i$, where $\beta_i$ is the number of entries in the second column of $T$ that are smaller than $a_i$.  As $T$ possesses only two-columns, we also know that $a_i = i + \beta_i$ and hence that $\dep(a_i) = i - 1 - (a_i - i) = 2i - 1 - a_i$.  Given this direct relationship between the $a_i$ and the $\dep(a_i) = c_i$, it is clear that distinct tableaux in $S(\lambda)$ must possess distinct values of $c_i$ for at least one $i$.
\end{proof}

Using Lemma \ref{thm: two-column inversion depths uniquely identify} still requires us to characterize which sequences of inversion depths $c_1,c_2,\hdots$ are possible.  The necessary conditions on the $c_1,c_2,\hdots$ actually prove to be fairly straightforward:

\begin{lemma}
\label{thm: two-column valid inversion depths}
Set $\lambda = 2^n$, and let $c_1,\hdots,c_n$ be a sequence of non-negative integers.  Then there exists (unique) $T \in S(\lambda)$ whose first column entries $a_1,\hdots,a_n$ satisfy $\dep(a_i) = c_i$ for all $1 \leq i \leq n$ if and only if $c_1 = 0$ and $c_i \leq c_{i-1} + 1$ for all $i > 1$.
\end{lemma}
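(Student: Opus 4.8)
The plan is to exploit the explicit relation between first-column entries and inversion depths that appeared inside the proof of Lemma~\ref{thm: two-column inversion depths uniquely identify}, namely $\dep(a_i) = 2i - 1 - a_i$, and then to translate the classical description of which sets can serve as the first column of a shape-$2^n$ standard tableau into the stated conditions on the $c_i$. Concretely, if $T \in S(\lambda)$ has first column $a_1 < \cdots < a_n$, then $c_i := \dep(a_i) = 2i-1-a_i$, so $a_i = 2i-1-c_i$; hence a nonnegative sequence $(c_1,\ldots,c_n)$ is realizable if and only if the integers $a_i := 2i-1-c_i$ form the first column of some $T \in S(\lambda)$, and in that case $T$ is unique by Lemma~\ref{thm: two-column inversion depths uniquely identify}.

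Next I would record the characterization of admissible first columns: a set $\{a_1 < \cdots < a_n\} \subseteq \{1,\ldots,2n\}$ occurs as the first column of some standard Young tableau of shape $2^n$ if and only if $a_i \le 2i-1$ for every $i$. I would include the short verification rather than merely cite it. Reading $1,2,\ldots,2n$ and recording, for each value, whether it lies in column $1$ or column $2$ produces a word; column-standardness is automatic once $n$ values are assigned to each column, so the only real constraint is the row condition $a_i < b_i$ (where $b_1 < \cdots < b_n$ is the second column). One checks that $a_i < b_i$ for all $i$ is equivalent to the Dyck/ballot condition that no prefix assigns more values to column $2$ than to column $1$: at the step where the $i$-th column-$2$ value is read there must already have been at least $i$ column-$1$ values, forcing $a_i < b_i$, and conversely a failing prefix produces some $j$ with $b_{j+1} > a_{j+1}$ violated. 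The ballot condition in turn says exactly that among $\{1,\ldots,2i-1\}$ there are at least $i$ column-$1$ values, i.e.\ $a_i \le 2i-1$.

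With this in hand the proof is a direct substitution. Given $c_i \ge 0$ and $a_i = 2i-1-c_i$: the inequality $a_i \le 2i-1$ is equivalent to $c_i \ge 0$, hence automatic; the requirement $a_1 \ge 1$ is equivalent to $c_1 \le 0$, i.e.\ $c_1 = 0$; and for $i > 1$ the strict inequality $a_{i-1} < a_i$ unwinds to $2i-3-c_{i-1} < 2i-1-c_i$, i.e.\ $c_i \le c_{i-1}+1$. Conversely, $c_1 = 0$ together with $c_i \le c_{i-1}+1$ for all $i>1$ forces $c_i \le i-1$, so $a_i = 2i-1-c_i \ge i \ge 1$; thus the $a_i$ are strictly increasing positive integers with $a_i \le 2i-1$, and by the characterization above they form the first column of a (unique, by Lemma~\ref{thm: two-column inversion depths uniquely identify}) $T \in S(\lambda)$ realizing $\dep(a_i)=c_i$. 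This gives both implications.

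I expect the only point requiring care to be the justification of the ballot characterization of admissible first columns — in particular confirming that the row-increasing condition $a_i < b_i$ contributes nothing beyond the Dyck condition and that $a_i \le 2i-1$ is precisely that Dyck condition. Everything else reduces to the substitution $a_i = 2i-1-c_i$, which is forced by the computation already made in the proof of Lemma~\ref{thm: two-column inversion depths uniquely identify}.
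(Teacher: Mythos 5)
Your proof is correct and takes essentially the same route as the paper: both hinge on the substitution $a_i = 2i-1-c_i$ and on building $T$ with first column $2i-1-c_i$ and second column the remaining values in increasing order. The only difference is presentational — you isolate the ballot-type characterization $a_i \le 2i-1$ of admissible first columns as a separate verified step, whereas the paper checks row- and column-standardness of the constructed tableau directly in passing.
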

\begin{proof}
($\Rightarrow$) Take any $T \in S(\lambda)$.  Clearly, the top entry $a_1$ in the first column of $T$ must have an inversion depth of $0$, requiring $c_1 = 0$.  By the definition of inversion depth, for any $i>1$ there exist $\beta_{i-1} = (i-1)-1-c_{i-1}$ entries in the second column of $T$ that are smaller than $a_{i-1}$ and $\beta_i = i-1-c_i$ entries in the second column of $T$ smaller than $a_i$.  As $a_{i-1} < a_i$ we have $\beta_{i-1} \leq \beta_i$ and thus $c_i \leq c_{i-1} + 1$.

($\Leftarrow$) Now take a sequence of non-negative integers $c_1,\hdots,c_n$ satisfying the stated conditions.  We use that sequence to construct the tableau $T$ below, where $b_1,\hdots,b_n$ are the $n$ remaining integers not used in the first column (arranged in increasing order).

\begin{center}
\begin{tabular}{|c|c|}
\hline
$1 - c_1$ & $b_1$ \\ \hline
$3 - c_2$ & $b_2$ \\ \hline
$\vdots$ & $\vdots$ \\ \hline
$2n-1 - c_n$ & $b_n$ \\ \hline
\end{tabular}
\end{center}

Via equivalent reasoning to the proof of Lemma \ref{thm: two-column inversion depths uniquely identify}, if $a_i = 2i - 1 - c_i$ then $\dep(a_i) = 2i - 1 - (2i - 1 - c_i) = c_i$.  The assumptions that $c_1 = 0$ and $c_i \leq c_{i-1} + 1$ imply $c_i \leq i-1$ and hence $a_i \geq i$.  This ensures that the entries in the first column of $T$ are always positive integers between $1$ and $2n-1$.  By construction, $a_i \leq 2i-1$ and there are always enough larger entries leftover for the $b_i$ to ensure that $T$ is row-standard.  The fact that $c_i \leq c_{i-1} + 1$ also guarantees that $a_i \geq a_{i-1}$, allowing us to conclude that $T$ is column-standard.  Hence $T \in S(\lambda)$ is our standard Young tableau with the required inversion depths down its first column.
\end{proof}

Combining Lemmas \ref{thm: two-column inversion depths uniquely identify} and \ref{thm: two-column valid inversion depths}, we may conclude that there is precisely one $T \in S(\lambda)$ for every sequence of non-negative integers $c_1,\hdots,c_n$ such that $c_1 = 0$ and $c_i \leq c_{i-1} + 1$ for all $1 \leq i \leq n$.  One may use a generating tree to verify that the total number of such sequences is the Catalan number $C_n$, as expected from the hook-length formula applied to $\lambda = 2^n$.

Notice that if $T \in S(\lambda)$ is associated with the sequence $c_1,\hdots,c_n$, then Corollary \ref{thm: generating function standard corollary} gives generating function $\chi^T(q) = [n]_q! \prod_i [c_i + 1]_q$, where the leading $[n]_q!$ comes from the inversion depths of entries in the second column of $T$.  To identify the number $\phi(\chi)$ of $T \in S(\lambda)$ with a fixed generating function $\chi^T(q)$, we thus only need to determine how many valid sequences $c_1,\hdots,c_n$ have a particular number of zeroes, ones, twos, etc.

Once again pause to observe that the conditions on our sequences $c_1,\hdots,c_n$ guarantee that $c_i \leq i-1$ and hence that we only need to count occurrences of $0,1,\hdots,n-1$.  Henceforth let $\psi(\alpha_0,\hdots,\alpha_{n-1})$ denote the number of non-negative integer sequences $c_1,\hdots,c_n$ satisfying both $c_1 = 0$ and $c_i \leq c_{i-1} + 1$ that possess precisely $\alpha_k$ instances of $k$.  Clearly we must have $\sum_i \alpha_i = n$ in order for $\psi(\alpha_0,\hdots,\alpha_{n-1}) \neq 0$.  To achieve $\psi(\alpha_0,\hdots,\alpha_{n-1}) \neq 0$, the $c_i \leq c_{i-1} + 1$ condition also requires that a zero value $\alpha_k = 0$ never be followed by a nonzero value $\alpha_{k+1} \neq 0$.  One can show that these are the only two conditions needed to guarantee that $\psi(\alpha_0,\hdots,\alpha_{n-1}) \neq 0$, although we do not require this fact in the subsequent proof.

\begin{theorem}
\label{thm: two-column generating function rectangular}
Let $\lambda = 2^n$.  Then the $\vert S_k(\lambda) \vert$ have generating function:
$$\xi(q) \ = \ \sum_k \vert S_k(\lambda) \vert q^k \ = \ [n]_q! \left( \sum_{\alpha_0 + \hdots + \alpha_{n-1} = n} \left( \prod_{i=1}^{n-1} \binom{\alpha_i + \alpha_{i-1} - 1}{\alpha_i} [i+1]_q^{\alpha_i} \right) \right)$$
\end{theorem}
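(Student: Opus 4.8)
The plan is to convert the theorem, via the bijection of Lemmas~\ref{thm: two-column inversion depths uniquely identify} and~\ref{thm: two-column valid inversion depths}, into a purely combinatorial counting problem about the sequences $c_1,\dots,c_n$, and then to resolve that problem by an inductive ``peel off the top level'' argument. Recall those two lemmas identify $S(\lambda)$ with the set $\mathcal{C}_n$ of integer sequences $(c_1,\dots,c_n)$ satisfying $c_1 = 0$, $c_i \ge 0$, and $c_i \le c_{i-1}+1$; under this identification the first-column entries of the corresponding tableau $T = T(c_1,\dots,c_n)$ have inversion depths $c_1,\dots,c_n$, while its second-column entries $b_1,\dots,b_n$ always satisfy $\dep(b_i)=i-1$. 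Hence Corollary~\ref{thm: generating function standard corollary} gives $\chi^T(q) = [n]_q!\prod_{i=1}^n [c_i+1]_q$, and since $[1]_q=1$ the entries with $c_i=0$ contribute trivially. Summing over $T\in S(\lambda)$ and grouping sequences according to how many entries take each value, I would first establish
$$\xi(q)\;=\;[n]_q!\sum_{(c_1,\dots,c_n)\in\mathcal{C}_n}\prod_{i=1}^n[c_i+1]_q\;=\;[n]_q!\sum_{\alpha_0+\cdots+\alpha_{n-1}=n}\psi(\alpha_0,\dots,\alpha_{n-1})\prod_{i=1}^{n-1}[i+1]_q^{\alpha_i}.$$
Comparing with the asserted formula, the entire problem reduces to the identity $\psi(\alpha_0,\dots,\alpha_{n-1})=\prod_{i=1}^{n-1}\binom{\alpha_i+\alpha_{i-1}-1}{\alpha_i}$.

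To prove that identity I would induct on the largest value $m$ occurring among the sequences being counted. Given a sequence in $\mathcal{C}_n$ with values in $\{0,\dots,m\}$, delete its $\alpha_m$ entries equal to $m$. Because an entry of value $m\ge1$ must be immediately preceded by an entry of value $m-1$ or $m$, the deleted entries fall into maximal runs, each sitting directly after some occurrence of $m-1$ (and no run begins at position $1$, since $c_1=0$); one checks quickly that the shortened sequence still lies in $\mathcal{C}_{n-\alpha_m}$ and has value-type $(\alpha_0,\dots,\alpha_{m-1})$. Conversely, from any sequence of value-type $(\alpha_0,\dots,\alpha_{m-1})$ one reconstructs every sequence of value-type $(\alpha_0,\dots,\alpha_m)$ by choosing, independently for each of the $\alpha_{m-1}$ occurrences of the value $m-1$, how many copies of $m$ to splice in immediately after it (possibly none), subject only to the counts summing to $\alpha_m$; distinct choices give distinct sequences and the number of choices equals the number of weak compositions of $\alpha_m$ into $\alpha_{m-1}$ parts, namely $\binom{\alpha_m+\alpha_{m-1}-1}{\alpha_m}$. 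This yields the recursion $\psi(\alpha_0,\dots,\alpha_m)=\psi(\alpha_0,\dots,\alpha_{m-1})\binom{\alpha_m+\alpha_{m-1}-1}{\alpha_m}$, with base case $\psi(\alpha_0)=1$ (the all-zero sequence). Trailing zero parts $\alpha_{m+1}=\cdots=\alpha_{n-1}=0$ contribute factors $\binom{\cdot}{0}=1$ (reading $\binom{-1}{0}=1$), and when $\alpha$ fails to be a legitimate value-type --- e.g. $\alpha_0=0$, or a zero part followed by a positive part --- both sides of the identity vanish, so the formula holds for all compositions summing to $n$. Substituting back into the displayed expression for $\xi(q)$ finishes the proof.

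The main obstacle is the bijective claim in the inductive step: one must verify carefully that ``delete all maximal runs of the top value'' and ``splice runs of the top value in after occurrences of the second-highest value'' are mutually inverse, with no over- or under-counting, paying attention to the degenerate cases ($\alpha_0\ge1$ being automatic, runs appended at the very end of the sequence, and $\alpha_{m-1}=0$ forcing $\alpha_m=0$). Once that correspondence is pinned down the remaining steps are routine bookkeeping with $q$-numbers and binomial coefficients, and setting $q=1$ provides the sanity check that the coefficients sum to the Catalan number $C_n$.
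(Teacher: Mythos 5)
Your proposal is correct and follows essentially the same route as the paper: reduce via Lemmas \ref{thm: two-column inversion depths uniquely identify} and \ref{thm: two-column valid inversion depths} to counting depth sequences by value-type, prove $\psi(\alpha_0,\hdots,\alpha_{n-1}) = \prod_{i=1}^{n-1}\binom{\alpha_i+\alpha_{i-1}-1}{\alpha_i}$ via the observation that each value $k$ must immediately follow a $k-1$ or a $k$, and then sum $[n]_q!\prod_i[i+1]_q^{\alpha_i}$ over value-types. Your ``peel off the top value'' induction is just the paper's ``insert the $\alpha_k$ copies of $k$ after the $(k-1)$'s'' recursion read in the opposite direction (weak compositions of $\alpha_k$ into $\alpha_{k-1}$ parts), so the two arguments are the same in substance, with your version merely making the bijectivity check and the degenerate-case conventions more explicit.
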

\begin{proof}
Our primary goal is to show that $\psi(\alpha_0,\hdots,\alpha_{n-1}) = \prod_{i=1}^{n-1} \binom{\alpha_i + \alpha_{i-1} - 1}{\alpha_i}$.  So assume that we want to construct an arbitrary sequence of non-negative integers $c_1,\hdots,c_n$ satisfying both $c_1=0$ and $c_i \leq c_{i-1} + 1$ that possesses precisely $\alpha_k$ instances of $k$ for each $k \geq 0$.  We recursively build up this sequence by simultaneously inserting all $\alpha_k$ copies of $k$ into a partial sequence that contains $\alpha_j$ copies of $j$ for all $0 \leq j \leq k-1$, making sure that our conditions on the $c_i$ are preserved after each step.

We begin by inserting the $\alpha_0$ copies of $0$ into an empty sequence, noticing that $c_1 = 0$ implies that we always have $\alpha_0 > 0$.  There is obviously only one way to perform this insertion.  Now assume that we have continued this process up to the insertion of the $\alpha_k$ copies of $k$.  Preserving the $c_i \leq c_{i-1} + 1$ condition requires that these instances of $k$ may only appear directly after an instance of $(k-1)$ or another instance of $k$.  This means that our previous placement of all instances of $0,1,\hdots,(k-2)$ is irrelevant to the validity of our placement at the step, and that we only need to consider the relative placement of the $\alpha_k + \alpha_{k-1}$ instances of $k$ and $(k-1)$.  It follows that there are precisely $\binom{\alpha_k + \alpha_{k-1} - 1}{\alpha_k}$ valid ways to insert our $\alpha_k$ copies of $k$.

Ranging over all $k$, we may conclude that there are precisely $\psi(\alpha_0,\hdots,\alpha_{n-1}) = \prod_{i=1}^{n-1} \binom{\alpha_i + \alpha_{i-1} - 1}{\alpha_i}$ valid sequences $c_1,\hdots,c_n$ that contain $\alpha_0$ copies of $0$, $\alpha_1$ copies of $1$, etc.  Notice that, if $\alpha_{k+1} \neq 0$ follows $\alpha_k = 0$, then $\binom{\alpha_{k+1} + \alpha_k - 1}{\alpha_{k+1}} = \binom{\alpha_{k+1} - 1}{\alpha_{k+1}} = 0$ and $\psi(\alpha_0,\hdots,\alpha_{n-1}) = 0$, as expected.

So assume that $c_1,\hdots, c_n$ possesses $\alpha_k$ copies of $k$ for all $0 \leq k \leq (n-1)$, and that $T \in S(\lambda)$ is any tableau associated to $c_1,\hdots,c_n$ via the bijection of Lemma \ref{thm: two-column valid inversion depths}.  Corollary \ref{thm: generating function standard corollary} then gives generating function $\chi^T(q) = [n]_q!\prod_{i=1}^{n-1} [i+1]_q^{\alpha_i}$.  The $\phi(\alpha_0,\hdots,\alpha_{n-1})$ tableaux with that generating function contribute a total of $\phi(\alpha_0,\hdots,\alpha_{n-1}) [n]_q!\prod_{i=1}^{n-1} [i+1]_q^{\alpha_i} = [n]_q! \prod_{i=1}^{n-1} \binom{\alpha_i + \alpha_{i-1} - 1}{\alpha_i} [i+1]_q^{\alpha_i}$ to the overall generating function $\xi(q) = \sum_k \vert S_k(\lambda) \vert q^k$.  Ranging over all combinations of the $\alpha_i$ such that $\alpha_0 + \hdots + \alpha_{n-1} = n$ then gives the desired result.
\end{proof}

\begin{example}
\label{ex: two-column example}
For $\lambda = 2^3$, we have $\dep(a_{ij}) \leq 2$ for all entries $a_{ij}$ of $T \in S(\lambda)$.  The only non-zero values for $\psi(\alpha_0,\alpha_1,\alpha_2)$ are shown below, followed by the generating function for the $\vert S_k(\lambda) \vert$:

$$\psi(3,0,0) = 1 \hspace{.3in} \psi(2,1,0) = 2 \hspace{.3in} \psi(1,2,0) = 1 \hspace{.3in} \psi(1,1,1) = 1$$
$$\xi(q) \ = \ [3]_q! \left( 1[1]_q^3 + 2[1]_q^2 [2]_q + 1 [1]_q [2]_q^2 + 1 [1]_q [2]_q [3]_q \right)$$
$$= \ 5 + 16x + 25x^2 + 24x^3 + 14x^4 + 5x^5 + x^6$$
\end{example}

A methodology similar to the one above may be applied to the non-rectangular two-column case of $\lambda = 2^m 1^{n-m}$, as tableaux of this shape are still uniquely identified by the inversion depths of entries in their first column.  Sadly, a generalization of the generating function from Theorem \ref{thm: two-column generating function rectangular} becomes so convoluted that we do not attempt a full derivation here.  The difficulty in generalizing to $\lambda = 2^m 1^{n-m}$ is that a different (and more complicated) collection of inversion depths $c_1,\hdots,c_n$ are now possible.  In particular, entries $a_i$ in the one-column ``tail" of $T \in S(\lambda)$ now have a nonzero lower bound on $\dep(a_i) = c_i$.  One may quickly verify that inversion depths in the first column of $T \in S(\lambda)$ fall within the ranges shown in Figure \ref{fig: two-column tableau via inversion depths}.

\begin{figure}[ht!]
\centering
\scalebox{.9}{
\begin{tabular}{|c|c|}
\cline{1-2}
$(0)$ & $(0)$ \\ \cline{1-2}
$0 \leq c_2 \leq 1$ & $(1)$ \\ \cline{1-2}
$\vdots$ & $\vdots$ \\ \cline{1-2}
$0 \leq c_m \leq m-1$ & $(m-1)$ \\ \cline{1-2}
$0 \leq c_{m+1} \leq m$ & \multicolumn{1}{c}{ } \\ \cline{1-1}
$1 \leq c_{m+2} \leq m+1$ & \multicolumn{1}{c}{ } \\ \cline{1-1}
$\vdots$ & \multicolumn{1}{c}{ } \\ \cline{1-1}
$n-m-1 \leq c_{m+n} \leq n-1$ & \multicolumn{1}{c}{ } \\ \cline{1-1}
\end{tabular}}
\caption{Possible inversion depths for entries in an arbitrary $T \in S(\lambda)$ of shape $\lambda = 2^m 1^{n-m}$.}
\label{fig: two-column tableau via inversion depths}
\end{figure}

With the ranges of Figure \ref{fig: two-column tableau via inversion depths} in mind, we close this paper with the following generalization of Lemma \ref{thm: two-column valid inversion depths}.  The following proposition would be the first step in developing a closed generating function $\xi(q)$ for the $\vert S^k(\lambda) \vert$ when $\lambda = 1^m 2^{n-m}$.

\begin{proposition}
\label{thm: two-column non-rectangular valid inversion depths}
Set $\lambda=2^m 1^{n-m}$, and let $c_1,\hdots,c_n$ be a sequence of non-negative integers.  Then there exists (unique) $T \in S(\lambda)$ whose first column entries $a_1,\hdots,a_n$ satisfy $\dep(a_i) = c_i$ for all $1 \leq i \leq n$ if and only if $c_1 =0$, $c_i \leq c_{i-1} + 1$ for all $i > 1$, and $c_i>i-m-1$ for all $i>m$.
\end{proposition}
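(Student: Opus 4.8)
The plan is to follow the template of Lemma~\ref{thm: two-column valid inversion depths} essentially verbatim, with uniqueness handed to us by Lemma~\ref{thm: two-column inversion depths uniquely identify} (which is already stated for the shape $\lambda = 2^m 1^{n-m}$). The one preparatory observation I would isolate first is the identity $\dep(a_i) = 2i-1-a_i$, valid for \emph{every} $T \in S(\lambda)$: column-standardness places exactly $i-1$ (necessarily smaller) entries above $a_i$ in the first column, so $\dep(a_i) = (i-1) - \beta_i$, where $\beta_i$ counts the second-column entries of value less than $a_i$; and since all $n+m$ entries of $T$ are distinct, the entries smaller than $a_i$ are precisely these $i-1$ first-column entries together with those $\beta_i$ second-column entries, giving $a_i = i + \beta_i$ and hence $\dep(a_i) = (i-1) - (a_i - i) = 2i-1-a_i$.

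For the forward direction I would take $T \in S(\lambda)$ and read off the three conditions one at a time. Since $a_1$ is the global minimum we have $\beta_1 = 0$, so $c_1 = 0$. Since $a_{i-1} < a_i$ we have $\beta_{i-1} \le \beta_i$, so $c_i - c_{i-1} = (i-1-\beta_i) - (i-2-\beta_{i-1}) = 1 - (\beta_i - \beta_{i-1}) \le 1$, exactly as in Lemma~\ref{thm: two-column valid inversion depths}. Finally, the second column has only $m$ entries, so $\beta_i \le m$; for $i > m$ this gives $c_i = (i-1) - \beta_i \ge i-m-1$, which is the tail bound recorded in Figure~\ref{fig: two-column tableau via inversion depths}.

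For the converse I would start from a sequence $c_1,\ldots,c_n$ of non-negative integers satisfying the three conditions, set $a_i := 2i-1-c_i$, let $b_1 < \cdots < b_m$ list the elements of $\{1,\ldots,n+m\}$ missed by the $a_i$, and build $T$ by filling the first column top-to-bottom with $a_1,\ldots,a_n$ and the second column top-to-bottom with $b_1,\ldots,b_m$. The routine checks are: (i) $c_1 = 0$ together with $c_i \le c_{i-1}+1$ forces $c_i \le i-1$, so $a_i \ge i \ge 1$; (ii) for $i \le m$ we have $a_i \le 2i-1 \le n+m$, while for $i > m$ the tail condition $c_i \ge i-m-1$ gives $a_i \le i+m \le n+m$, so the $a_i$ are $n$ distinct legal values and the $b_j$ are well defined; (iii) $a_i - a_{i-1} = 2 - (c_i - c_{i-1}) \ge 1$, so the first column is strictly increasing (the second being increasing by construction); and (iv) $\dep(a_i) = 2i-1-a_i = c_i$ by the identity above.

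The main obstacle is verifying that $T$ is row-standard, i.e.\ that $a_i < b_i$ for $1 \le i \le m$ --- this is exactly the point that the proof of Lemma~\ref{thm: two-column valid inversion depths} dispatches with the remark that ``there are always enough larger entries leftover,'' and in the non-rectangular case it deserves an explicit counting argument. I would argue it as follows: for $i \le m$, the entries $a_1 < \cdots < a_i$ are $i$ distinct values each at most $2i-1$ (using $c_j \ge 0$), so at least $i$ of the $2i-1$ values in $\{1,\ldots,2i-1\}$ are first-column entries and hence at most $i-1$ of them are second-column entries; therefore the $i$-th second-column entry satisfies $b_i \ge 2i$. Consequently the number of first-column entries not exceeding $b_i$ equals $b_i - i \ge i$, so $a_i \le b_i$, and since $a_i \ne b_i$ we conclude $a_i < b_i$. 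Assembling (i)--(iv) with row-standardness shows $T \in S(\lambda)$ realizes the prescribed first-column inversion depths, completing the converse and hence the proposition.
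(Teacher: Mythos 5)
Your argument is correct and is essentially the paper's own: the same identity $\dep(a_i)=2i-1-a_i$, the same construction $a_i=2i-1-c_i$ with the complementary values $b_1<\cdots<b_m$ filling the second column, and uniqueness delegated to Lemma~\ref{thm: two-column inversion depths uniquely identify}; the only place you go beyond the paper's (very terse) proof is the explicit count giving $b_i\geq 2i$ and hence $a_i<b_i$, a check that Lemma~\ref{thm: two-column valid inversion depths} and Proposition~\ref{thm: two-column non-rectangular valid inversion depths} dispatch with ``enough larger entries leftover,'' so making it explicit is a welcome addition rather than a different route. One remark: the tail bound you prove and use is $c_i\geq i-m-1$ for $i>m$, which is what Figure~\ref{fig: two-column tableau via inversion depths} and the paper's construction actually support, whereas the proposition as printed reads $c_i>i-m-1$; the printed strict inequality is an off-by-one slip (for $\lambda=2^1 1^1$ the tableau with first row $1,2$ and second row $3$ has $c_2=0=i-m-1$), so your non-strict version is the correct reading of the statement.
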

\begin{proof}
The method of proof is directly equivalent to that of Lemma \ref{thm: two-column valid inversion depths}.  For the ($\Leftarrow$) direction, a valid sequence $c_1,\hdots,c_n$ of integers is now associated with the unique tableau shown below, which is a minor modification of the tableau from Lemma \ref{thm: two-column valid inversion depths}.

\begin{center}
\scalebox{.9}{
\begin{tabular}{|c|c|}
\hline
$1-c_1$ & $b_1$ \\ \cline{1-2}
$\vdots$ & $\vdots$ \\ \cline{1-2}
$2m-1 - c_m$ & $b_m$ \\ \cline{1-2}
$2m+1 - c_{m+1}$ & \multicolumn{1}{c}{ } \\ \cline{1-1}
$2m+3 - c_{m+2}$ & \multicolumn{1}{c}{ } \\ \cline{1-1}
$\vdots$ & \multicolumn{1}{c}{ } \\ \cline{1-1}
$2n - 1 - c_n$ & \multicolumn{1}{c}{ } \\ \cline{1-1}
\end{tabular}}
\end{center}
\end{proof}

\end{document}